\newtheorem{theorem}{Theorem}[section]
\newtheorem{lemma}[theorem]{Lemma}
\theoremstyle{definition}
\newtheorem{definition}[theorem]{Definition}
\newtheorem{prop}[theorem]{Proposition}
\theoremstyle{remark}
\newtheorem{remark}[theorem]{Remark}
\numberwithin{equation}{section}
\def\intslash{\rlap{\kern  .32em $\mspace {.5mu}\backslash$ }\int}
\def\qsl{{\rlap{\kern  .32em $\mspace {.5mu}\backslash$ }\int_{Q_x}}}
\def\R{{\mathbb R}}
\def\Z{{\mathbb Z}}
\def\M{{\mathcal M}}
\def\C{{\mathcal C}}
\def\F{{\mathcal F}}
\def\S{\mathbf S}
\def\N{\mathbb N}
\def\emph#1{{\it #1 }}
\def\pari{\partial}
\def\eg{{\it e.g. }}
\def\supp{{\text{\rm supp}}}
\def\inn#1#2{\langle#1,#2\rangle}
\def\rta{\rightarrow}
\def\card{\text{\rm card}}
\def\lc{\lesssim}
\def\pv{\text{\rm p.v.}}
\def\alp{\alpha}
\def\del{\delta}             
\def\eps{\varepsilon}
\def\lam{\lambda}            \def\Lam{\Lambda}
\def\vphi{\varphi}
\def\fr{\frac}
\newcommand{\Be}{\begin{equation}}
\newcommand{\Ee}{\end{equation}}
\newcommand{\Bes}{\begin{equation*}}
\newcommand{\Ees}{\end{equation*}}
\newcommand{\Bsp}{\begin{split}}
\newcommand{\Esp}{\end{split}}
\newcommand{\Bm}{\begin{multline}}
\newcommand{\Em}{\end{multline}}
\newcommand{\Bea}{\begin{eqnarray}}
\newcommand{\Eea}{\end{eqnarray}}
\newcommand{\Beas}{\begin{eqnarray*}}
\newcommand{\Eeas}{\end{eqnarray*}}
\newcommand{\Benu}{\begin{enumerate}}
\newcommand{\Eenu}{\end{enumerate}}
\newcommand{\Bi}{\begin{itemize}}
\newcommand{\Ei}{\end{itemize}}
\begin{document}

\title[Maximal operator of Calder\'on commutator]{Maximal operator for the higher order Calder\'on commutator}

\author{Xudong Lai}
\address{Xudong Lai: Institute for Advanced Study in Mathematics, Harbin Institute of Technology, Harbin, 150001, People's Republic of China}
\email{xudonglai@hit.edu.cn\ xudonglai@mail.bnu.edu.cn}
\thanks{This work was supported by China Postdoctoral Science Foundation (No. 2017M621253, No. 2018T110279), the National Natural Science Foundation of China (No. 11801118) and the Fundamental Research Funds for the Central Universities.}

\subjclass[2010]{Primary 42B20, 42B25, Secondary 46B70, 47G30}



\keywords{Multilinear, Calder\'on commutator, maximal operator, weighted space}

\begin{abstract}
In this paper, we investigate the weighted multilinear boundedness properties of the maximal higher order Calder\'on commutator for the dimensions larger than two.
We establish all weighted multilinear estimates on the product of the $L^p(\R^d,w)$ space, including  some peculiar endpoint estimates of the higher dimensional Calder\'on commutator.
\end{abstract}

\maketitle

\section{Introduction}

In the recent work \cite{Lai17}, the author studied the multilinear boundedness of the higher order Calder\'on commutator. The purpose of this paper is to further generalize those results to the weighted space for its maximal type operator. Before stating our main results, let us give some notation and background.
Define the truncated higher ($n$-th) order Calder\'on commutator by
\Bes
\mathcal{C}_\eps[\nabla A_1,\cdots,\nabla A_n,f](x)=\int_{|x-y|\geq\eps} K(x-y)\Big(\prod_{i=1}^n\fr{A_{i}(x)-A_i(y)}{|x-y|}\Big)\cdot f(y)dy,
\Ees
where $n$ is a positive integer and $K$ is the Calder\'on-Zygmund convolution kernel on $\mathbb{R}^d\setminus\{0\}\, (d\ge2)$ which means that $K$ satisfies the following three conditions:
\begin{equation}\label{e:12kb}
|K(x)|\lc|x|^{-d},
\end{equation}
\begin{equation}\label{e:12K_2}
\int_{r<|x|<R}K(x)(x/|x|)^{\alp}dx=0, \text{ $\forall 0<r<R<\infty$ and $\forall \alp\in\Z_+^d $ with $|\alp|=n$},
\end{equation}
\begin{equation}\label{e:12kr}
|K(x-y)-K(x)|\lc{|y|^\del}/{|x|^{d+\del}}\ \ \text{for some $0<\del\leq1$ if}\ \ |x|>2|y|.
\end{equation}
Then we define the higher order Calder\'on commutator and its maximal operator by
\Be\label{e:12com}
\begin{split}
\mathcal{C}[\nabla A_1,\cdots,\nabla A_n,f](x)&=\lim_{\eps\rta0}\mathcal{C}_\eps[\nabla A_1,\cdots,\nabla A_n,f](x),\\
\mathcal{C}_*[\nabla A_1,\cdots,\nabla A_n,f](x)&=\sup_{\eps>0}\big|\mathcal{C}_\eps[\nabla A_1,\cdots,\nabla A_n,f](x)\big|.
\end{split}
\Ee

It is the standard context to check that these functions $\mathcal{C}[\nabla A_1,\cdots,\nabla A_n,f](x)$ and $\mathcal{C}_*[\nabla A_1,\cdots,\nabla A_n,f](x)$ are well defined for $A_1$, $\cdots$, $A_n$, $f\in C_c^\infty(\mathbb{R}^d)$ (see \eg \cite{Gra249}).
This kind of commutator was first introduced by A. P. Calder\'on \cite{Cal65} when $n=1$ and $K(x)$ is a homogeneous kernel
and later \cite{Cal77} \cite{Cal78} for the higher order one (see also \cite{CM75}, \cite{CM78}).
One can easily see that the first order Calder\'on commutator  $\mathcal{C}[\nabla A,f](x)$ is a generalization of
\Bes
\begin{split}
[A, S]f(x)=A(x)S(f)(x)-S(Af)(x)=
-\pv\fr{1}{\pi}\int_{\R}\fr{1}{x-y}\fr{A(x)-A(y)}{x-y}f(y)dy
\end{split}
\Ees
where $S=\fr{d}{dx}\circ H$ and $H$ denotes the Hilbert transform.
It is well known that the commutator $[A, S]$ and it generalization are  elementary operators in harmonic analysis, which play an important role in the theory of the Cauchy integral along Lipschitz curve in $\mathbb{C}$, the boundary value problem of elliptic equation on non-smooth domain, the Kato square root problem on $\R$ and the mixing flow problem (see \eg \cite{Cal65}, \cite{Cal78}, \cite{Fef74}, \cite{MC97}, \cite{DL18}, \cite{Gra250}, \cite{CJ87}, \cite{SSS15}, \cite{HSSS17}, \cite{Leg18} for the details).

Many classical known results about the higher order Calder\'on commutator take place in the setting of the dimension $d=1$. For example, the endpoint estimate that the $n$-th order Calder\'on commutator $\C$ maps $L^{1}(\R)\times\cdots\times L^{1}(\R)\times L^1(\R)$ to $L^{\fr{1}{1+n},\infty}(\R)$ was proved by C. P. Calder\'on \cite{CCal75} when $n=1$, Coifman and Meyer \cite{CM75} when $n=1,2$ and Duong, Grafakos and Yan \cite{DGY10} when $n\geq 1$. Here we point out that one important fact used by Coifman and Meyer \cite{CM75},  Duong, Grafakos and Yan \cite{DGY10} is that the one dimensional higher order Calder\'on commutator can be reduced to the multilinear Calder\'on Zygmund operator (see the very nice exposition \cite[Chapter 7]{Gra250} and the reference therein).
However when the dimension $d\geq 2$, things become complicated since Calder\'on commutator is a non standard multilinear Calder\'on-Zygmund operator.
If we consider the Calder\'on-Zygmund kernel $K(x)=|x|^{-d}$, then the sharp bilinear estimates (except some endpoint estimates) of the first order Calder\'on commutator in this case has been established by Fong \cite{Fon16} via the time-frequency analysis method.
For the more general Calder\'on-Zygmund kernel or even rough homogeneous kernel, the author \cite{Lai17} established all multilinear boundedness of the higher order Calder\'on commutator for the higher dimensions, especially the endpoint estimate that the $n$-th order Calder\'on commutator $\C$ maps the product of Lorentz space  $L^{d,1}(\R^d)\times\cdots\times L^{d,1}(\R^d)\times L^1(\R^d)$ to $L^{\fr{d}{d+n},\infty}(\R^d)$.

The weighted results related to the Calder\'on commutator is also only known for the case $d=1$. Duong, Gong, Grafakos, Li and Yan \cite[Theorem 4.3]{DGGLY09} proved that $\C_*$ maps $L^{q_1}(\R,w)\times\cdots\times L^{q_n}(\R,w)\times L^p(\R^d,w)$ to $L^r(\R,w)$ if $\fr{1}{r}=\big(\sum_{i=1}^n\fr{1}{q_i}\big)+\fr{1}{p}$ with $\fr{1}{n+1}<r<\infty$, $1<q_1,\cdots,q_n\leq\infty$, $1<p<\infty$ and $w\in \cap_{i=1}^nA_{q_i}(\R)\cap A_p(\R)$. For the endpoint estimate, Grafakos, Liu and Yang \cite[Corollary 1.7]{GLY11} showed that $\C_*$ maps $L^{1}(\R,w)\times\cdots\times L^{1}(\R,w)\times L^1(\R,w)$ to $L^{\fr{1}{1+n},\infty}(\R,w)$ under the assumption $w\in A_1(\R)$. The method used in Duong et al. \cite {DGGLY09} and Grafakos et al. \cite{GLY11} is both that by establishing the weighted theory for a class of multilinear Calder\'on-Zygmund operators with non-smooth kernel and then applying it to the Calder\'on commutator for the dimension $d=1$.
For the higher dimensional case of the Calder\'on commutator, no proper weighted multilinear Calder\'on-Zygmund theory can be applied directly.

In this paper, we are interested in the following weighted strong type multilinear estimate (or weighted weak type estimate) for the maximal operator of the higher order Calder\'on commutator
\Be\label{e:12mmulti}
\|\mathcal{C}_*[\nabla A_1,\cdots,\nabla A_n,f]\|_{L^r(\R^d,w)}\lc \Big(\prod_{i=1}^n\|\nabla A_i\|_{L^{q_i}(\R^d,w)}\Big)\|f\|_{L^p(\R^d,w)}
\Ee
where $\fr{1}{r}=\big(\sum_{i=1}^n\fr{1}{q_i}\big)+\fr{1}{p}$ with $1\leq q_i\leq\infty$, $(i=1,\cdots,n)$, and $1\leq p\leq\infty$. However, it is unknown whether those kind of estimates hold for the maximal Calder\'on commutator $\C_*$ even in the unweighted case. In this paper, we will work directly on the weighted space and state our main results as follows.

\begin{theorem}\label{t:12}
Let $d\geq2$ and $n$ be a positive integer. Suppose $K$ satisfies $(\ref{e:12kb}), (\ref{e:12K_2})$
and $(\ref{e:12kr})$.
Assume that  $\fr{1}{r}=\big(\sum_{i=1}^n\fr{1}{q_i}\big)+\fr{1}{p}$ with $1\leq q_i\leq\infty$ $(i=1,\cdots,n)$,  and $1\leq p\leq\infty$. Suppose $w\in \big(\bigcap_{i=1}^nA_{\max\{\fr{q_i}{d},1\}}(\R^d)\big)\cap {A_p}(\R^d)$. We may have the following conclusions:

{\rm(i).} If $\fr{d}{d+n}< r<\infty$, $1<q_i\leq\infty$ $(i=1,\cdots,n)$ and $1<p\leq\infty$, then \eqref{e:12mmulti} holds.

{\rm (ii).} If $\fr{d}{d+n}\leq r<\infty$ with $q_i=1$ for some $i=1,\cdots,n$; or $p=1$; or $r=\fr{d}{d+n}$, then the  following multilinear estimate holds
\Be\label{e:16mwek}
\|\mathcal{C}_*[\nabla A_1,\cdots,\nabla A_n,f]\|_{L^{r,\infty}(\R^d,w)}\lc \Big(\prod_{i=1}^n\|\nabla A_i\|_{L^{q_i}(\R^d,w)}\Big)\|f\|_{L^p(\R^d,w)}
\Ee
and in this case, if $q_i=d$ for some $i=1,\cdots,n$, $L^{q_i}(\R^d,w)$ in the above inequality should be replaced by $L^{d,1}(\R^d,w)$, the weighted Lorentz space. Specially, we have the following endpoint estimate
\Be\label{e:12mulendpoint}
\|\mathcal{C}_*[\nabla A_1,\cdots,\nabla A_n,f]\|_{L^{\fr{d}{d+n},\infty}(\R^d,w)}\lc \Big(\prod_{i=1}^n\|\nabla A_i\|_{L^{d,1}(\R^d,w)}\Big)\|f\|_{L^1(\R^d,w)}.
\Ee
\end{theorem}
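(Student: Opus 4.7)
The plan is to reduce the maximal operator $\mathcal{C}_*$ to the non-maximal commutator $\mathcal{C}$ via a pointwise Cotlar-type inequality, and then to establish the weighted strong and weak bounds for $\mathcal{C}$ itself by adapting the frequency decomposition of \cite{Lai17} to the weighted setting. Specifically, I would first prove a multilinear Cotlar inequality of the form
$$\mathcal{C}_*[\nabla A_1,\ldots,\nabla A_n,f](x) \lc M_\tau\bigl(\mathcal{C}[\nabla A_1,\ldots,\nabla A_n,f]\bigr)(x) + \mathcal{N}(|\nabla A_1|,\ldots,|\nabla A_n|,|f|)(x)$$
for some $0<\tau<1$, where $M_\tau g = (M(|g|^\tau))^{1/\tau}$ and $\mathcal{N}$ is a suitable product-type Hardy--Littlewood maximal operator. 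This is derived in the standard way by cutting the truncation $\mathcal{C}_\eps$ at a cube of scale $\eps$, using \eqref{e:12K_2} and \eqref{e:12kr} to replace the global piece by $\mathcal{C}$ evaluated at a nearby point, then averaging. Under our hypothesis $w\in\bigcap_i A_{\max\{q_i/d,1\}}\cap A_p$, taking $\tau$ sufficiently small ensures that $M_\tau$ is bounded on $L^r(w)$, while the $L^{q_i}(w)$ and $L^p(w)$ boundedness of the Hardy--Littlewood maximal operator together with H\"older's inequality disposes of $\mathcal{N}$. This reduces part~(i) to the analogous strong-type bound for the non-maximal $\mathcal{C}$.

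For $\mathcal{C}$ itself, I would apply a Littlewood--Paley decomposition $\nabla A_i = \sum_{k_i}\Delta_{k_i}(\nabla A_i)$ as in \cite{Lai17}. Each dyadic block produces a smoothed singular integral for which the cancellation hypothesis \eqref{e:12K_2} permits the geometric summation in the frequency indices, at the cost of trading each gradient on $A_i$ against a fractional Riesz-potential factor. The resulting pieces are linear operators in $f$ of Calder\'on--Zygmund or fractional-integral type, whose weighted mapping properties are classical; the threshold $q_i/d$ in the $A_p$-hypothesis on $w$ arises exactly as the Sobolev exponent dictated by trading $\nabla A_i$ for this fractional factor, and the intersection across $i=1,\ldots,n$ is forced by the multilinear H\"older assembly. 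Combining these pieces produces the strong-type bound in~(i).

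Part~(ii) is the main obstacle. For the critical endpoint \eqref{e:12mulendpoint} I would perform a weighted Calder\'on--Zygmund decomposition of $f$ at level $\alpha$, writing $f = g + \sum_j b_j$; the good part $g$ is handled by the strong bound of~(i) applied with some $p>1$ close to $1$, while the bad part $\sum_j b_j$ is controlled by the $(n+1)$-fold product decomposition of the kernel: on the expanded exterior of the bad cubes, the regularity \eqref{e:12kr} combined with $\int b_j = 0$ produces a kernel gain, whereas on the interior one applies a Kolmogorov-type inequality in each $\nabla A_i$-slot using the $L^{d,\infty}(w)$ endpoint of the associated Riesz-potential factor. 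This $L^{d,\infty}$ duality is exactly what forces the Lorentz norm $L^{d,1}(w)$ on $\nabla A_i$ in the statement rather than $L^d(w)$. The remaining weak-type cases in~(ii) then follow by multilinear Marcinkiewicz interpolation between \eqref{e:12mulendpoint} and the strong bounds of~(i). A technically delicate point is that the Cotlar inequality above is incompatible with a naive passage of the weak-type $L^{d/(d+n),\infty}$ bound from $\mathcal{C}$ to $\mathcal{C}_*$; hence this CZ decomposition must be executed on $\mathcal{C}_*$ directly, with the $\sup_\eps$ carried through the argument, which is the most delicate ingredient of the proof.
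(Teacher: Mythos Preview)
Your approach has a genuine gap in part~(ii), and it is precisely the step the paper is built around.

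When you perform the Calder\'on--Zygmund decomposition on $f$ and try to treat the bad part $\sum_j b_j$ by the cancellation $\int b_j=0$, you need $y$-regularity of the full kernel
\[
\mathfrak{K}(x,y)=K(x-y)\prod_{i=1}^n\frac{A_i(x)-A_i(y)}{|x-y|}.
\]
Condition \eqref{e:12kr} gives regularity of $K$, but the product factor contributes, after subtraction, terms containing $A_i(y)-A_i(y_j)$ with $y,y_j\in Q_j$. Unless $A_i$ is Lipschitz this is not $O(l(Q_j))$, so the usual Calder\'on--Zygmund gain $l(Q_j)^\delta/|x-y_j|^{d+\delta}$ simply is not there. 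The vague appeal to ``$L^{d,\infty}(w)$ endpoint of the associated Riesz-potential factor'' does not repair this: the obstruction is pointwise, inside each bad cube, and no dual pairing with $\nabla A_i$ is available at that stage.

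The paper's device is exactly to manufacture the missing Lipschitz control. For each $i$ one builds an exceptional set $J_{i,\lambda}$ (via the Mary Weiss maximal function $\mathcal{M}(\nabla A_i)$ when $q_i\ge d$, and via a Calder\'on--Zygmund decomposition of $\nabla A_i$ together with the weighted Sobolev inequality of Lemma~\ref{l:16sobolev} and Lemma~\ref{l:12qleqd} when $q_i<d$) such that $w(J_{i,\lambda})\lesssim\lambda^{-r}$ and, crucially, $A_i$ is Lipschitz with constant $\lambda^{r/q_i}$ on the complement. One then extends $A_i$ to a global Lipschitz $\tilde A_i$, splits $f$ according to the Whitney decomposition of the union of exceptional sets, and reduces to the already-known $q_1=\cdots=q_n=\infty$ case (Proposition~\ref{p:12strongr}). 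The decomposition that carries the argument is of the $A_i$, not of $f$; this is the idea your proposal is missing.

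Two smaller points. First, your Cotlar inequality with $\mathcal{N}$ a ``product-type Hardy--Littlewood maximal operator'' is optimistic: the local piece in a Cotlar argument for $\mathcal{C}_*$ produces $|A_i(x)-A_i(y)|/|x-y|$ on a small ball, which is not dominated by $M(\nabla A_i)$ but rather by $M_{\tilde q_i}(\nabla A_i)$ with $\tilde q_i$ near $d$; the paper carries this out (Proposition~\ref{p:16maxinf}) but explicitly notes it only yields the range $q_i>d$, $r>1$. Second, the plan to recover the remaining weak-type endpoints of~(ii) by multilinear Marcinkiewicz interpolation from \eqref{e:12mulendpoint} and the strong bounds of~(i) cannot work: interpolation produces strong bounds in the open interior, not weak bounds at other boundary points. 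In the paper every weak-type case in~(ii) is proved directly by the exceptional-set method (Propositions~\ref{p:12qibigd}--\ref{p:12qgeqleqdinfty}), and only afterwards does linear Marcinkiewicz interpolation, freezing all but one slot, fill in the remaining strong-type cases of~(i).
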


\begin{remark}
\begin{enumerate}[(i).]
\item These results in Theorem \ref{t:12} are new even in the unweighted case when the dimension $d\geq2$.
\item When $0<r<\fr{d}{d+n}$, these multilinear strong type estimates \eqref{e:12mmulti} (or weak type estimates \eqref{e:16mwek}) do not hold for the maximal Calder\'on operator $\C_*$. In fact, some counterexamples has been constructed in \cite[Theorem 1.1]{Lai17} to show that those multilinear strong type estimates (or weak type estimates) fail even for the operator $\C$ in the case $0<r<\fr{d}{d+n}$. Thus our results in Theorem \ref{t:12} are optimal in this sense.

\item The condition of the weight $w\in\big(\bigcap_{i=1}^nA_{\max\{\fr{q_i}{d},1\}}(\R^d)\big)\cap {A_p}(\R^d)$ seems to be unnatural at the first sight, since it doesn't appear previously. However, this kind of condition is just appropriate for the higher dimensional Calder\'on commutator as we will see in our later proof. In fact $w\in A_{\max\{\fr{q_i}{d},1\}}(\R^d)$ comes from $\nabla A_i\in L^{q_i}(\R^d,w)$ and $w\in A_p(\R^d)$ comes from $f\in L^p(\R^d,w)$.
When the dimension $d=1$, \eqref{e:12mmulti} turns out to be that $\C_*$ maps $L^{q_1}(\R,w)\times\cdots\times L^{q_n}(\R,w)\times L^p(\R^d,w)$ to $L^r(\R,w)$ if $\fr{1}{n+1}<r<\infty$, $1<q_1,\cdots,q_n\leq\infty$, $1<p\leq\infty$ and $w\in \cap_{i=1}^nA_{q_i}(\R)\cap A_p(\R)$, which has been proved by Duong, Gong, Grafakos, Li and Yan \cite[Theorem 4.3]{DGGLY09} except the endpoint case $q_i=\infty$ for some $i$ or $p=\infty$. Therefore even in the one dimensional case \eqref{e:12mmulti} is new at the endpoint case $q_i=\infty$ for some $i$ or $p=\infty$. To the best knowledge of the author, \eqref{e:12mmulti} is new when $d\geq 2$.

\item Notice that $L^{1,1}(\R,w)=L^1(\R,w)$. Therefore when the dimension $d=1$, \eqref{e:12mulendpoint} is just that the maximal $n$-th order Calder\'on commutator maps $L^{1}(\R,w)\times\cdots\times L^{1}(\R,w)\times L^1(\R,w)$ to $L^{\fr{1}{1+n},\infty}(\R,w)$ under the assumption $w\in A_1(\R)$, which has been proved by Grafakos, Liu and Yang \cite[Corollary 1.7]{GLY11}. To the best knowledge of the author, \eqref{e:12mulendpoint} is new when $d\geq 2$. Although we assume that $d\geq 2$ in our main results, the proof presented in this paper is also valid for $d=1$. Therefore even when $d=1$, the proof of \eqref{e:12mmulti} and \eqref{e:12mulendpoint}  here are quite different from that by Duong, Gong, Grafakos, Li and Yan \cite{DGGLY09}, Grafakos, Liu and Yang \cite{GLY11}, thus we give new proofs of \eqref{e:12mmulti} and \eqref{e:12mulendpoint} for $d=1$.
  \item Currently, there are extensively research on seeking the optimal quantitative weighted bound for singular integral. We do not purse this topic in this paper but hope to work on it in the future work.
\end{enumerate}
\end{remark}

Notice first that if $q_i=\infty$ with $i=1,\cdots,n$, i.e. $A_i$ is a Lipschitz function, then $\C[\nabla A_1,\cdots,\nabla A_n, \cdot]$ is a standard Calder\'on Zygmund operator. By the standard weighted theory of the Calder\'on-Zygmund operator, we may easily get that $\C_*$ maps $L^\infty(\R^d,w)\times\cdots\times L^\infty(\R^d,w)\times L^p(\R^d,w)$ to $L^p(\R^d,w)$ for $1<p<\infty$ and $L^\infty(\R^d,w)\times\cdots\times L^\infty(\R^d,w)\times L^1(\R^d,w)$ to $L^{1,+\infty}(\R^d,w)$.
Recall the method used in \cite{DGGLY09} or \cite{GLY11}, by establishing the Cotlar inequality for the multilinear Calder\'on-Zygmund operator, the authors in \cite{DGGLY09} or \cite{GLY11} proved the weighted multilinear estimates for the Calder\'on-Zygmund operator and then applies them to the one-dimensional Calder\'on commutator.
There are also variants of the Cotlar inequality for the higher dimensional Calder\'on commutator, which is available only for the multilinear estimates \eqref{e:12mmulti} in the case that all $q_i>d, i=1,\cdots,n, r>1$ (see Proposition \ref{p:16maxinf}).
To deal with the remainder case, our strategy is as follows. We straightforward establish the endpoint estimates in (ii) of Theorem \ref{t:12}, which means that we need to give some weak type estimates. Note that $A_i$ belongs to the Sobolev space $W^{1,q_i}(\R^d,w)$. We will construct an {\it exceptional set\/} which satisfies the required weighted weak type estimate. And on the complementary set of {\it exceptional set\/}, the function $A_i$ is a Lipschitz function with a bound $\lam^{\fr{r}{q_i}}$. Then, roughly speaking, the strong type estimate  and the weak type $L^{1,\infty}(\R^d,w)$ boundedness (with  $q_i=\infty,i=1,\cdots,n$) of $\mathcal{C}_*[\nabla A_1,\cdots,\nabla A_n, f](x)$ could be applied on the complementary set of {\it exceptional set\/}. To construct the {\it exceptional set\/}, we will make use of the Marry Weiss maximal operator and the weighted Sobolev inequality.

This paper is organized as follows. Firstly some preliminary lemmas are presented in Section \ref{s:162}.
In Section \ref{s:163}, we give the proof of Theorem \ref{t:12}. The proof is divided into several case.  In Subsection \ref{s:1222}, we prove some strong type estimates of (i) in Theorem \ref{t:12}. The proofs of (ii) in Theorem \ref{t:12} are given in Subsections \ref{s:1223} and \ref{s:1225}. In Subsection \ref{s:1226}, we shall use the linear Marcinkiewicz interpolation with some strong type estimates of (i) and full weak type estimates of (ii)  to show the rest of (i) in Theorem \ref{t:12}.
\vskip0.24cm
\textbf{Notation}. Throughout this paper, we only consider the dimension $d\ge2$ and the letter $C$ stands for a positive finite constant which is independent of the essential variables, not necessarily the same one in each occurrence. $A\lc B$ means $A\leq CB$ for some constant $C$. By the notation $C_\eps$ means that the constant depends on the parameter $\eps$. $A\approx B$ means that $A\lc B$ and $B\lc A$.
$n$ represents the order of Calder\'on commutator. The indexes $r$, $q_1,\cdots, q_n$ and $p$ satisfy $\fr{1}{r}=\big(\sum_{i=1}^n\fr{1}{q_i}\big)+\fr{1}{p}$ with $1\leq q_i\leq\infty$ $(i=1,\cdots,n)$ and $1\leq p\leq\infty$ in the whole paper. For a set $E\subset\R^d$, we denote by $w(E)=\int_{E}w(x)dx$.  $\nabla A$ will stand for the vector $(\pari_1A,\cdots,\pari_dA)$ where $\pari_i A(x)=\pari A(x)/\pari x_i$. Define $\N_i^j=\{i,i+1,\cdots,j\}$.
Set $$\|\nabla A\|_{X}=\Big\|\Big(\sum_{i=1}^d|\pari_iA|^2\Big)^{\fr{1}{2}}\Big\|_{X}$$
for $X=L^p(\R^d,w)$ or $X=L^{d,1}(\R^d,w)$. $\Z_+$ denotes the set of all nonnegative integers and $\Z_+^d=\underbrace{\Z_+\times \cdots\times \Z_+}_d.$ For $\alp\in\Z_+^d$ and $x\in\R^d$, we define $x^\alp=x_1^{\alp_1}x_2^{\alp_2}\cdots x_d^{\alp_d}$.
\vskip1cm

\section{Some Preliminary Lemmas}\label{s:162}\quad

In this section, we will introduce the weighted properties of some operators which are useful in the proof of Theorem \ref{t:12}. Those operators include the Hardy-Littlewood maximal operator with order $\del$, the maximal sharp function operator, the Marry Weiss maximal operator and many others. And also a weighted Sobolev inequality is needed.

\begin{definition}[$A_p(\R^d)$ weight]\label{d:16a1}
A nonnegative locally integrable function $w$ on $\R^{d}$ is called to be an $A_p(\R^{d})$ weight if there exists a constant $C>0$ such that
\Be\label{ap:n-1}
\sup_{Q }\Big(\fr{1}{|Q|}\int_{Q}w(x)dx\Big)\Big(\fr{1}{|Q|}\int_Bw(x)^{-\fr{1}{p-1}}dx\Big)^{p-1}\le C<\infty,
\Ee
where the supremum is taken all cube $Q$ in $\R^{d}$. The smallest constant $C$ for \eqref{ap:n-1} holds is called the $A_p$ bound of $w$ and is denoted by $[w]_{A_p}$.
We call $w$  an $A_1(\R^d)$ weight if there exists a constant $C$ independent of $Q$ such that
\Be\label{e:16a1}
\fr{1}{|Q|}\int_{Q}w(z)dz\leq C w(y),\ \  a.e. \ \ y\in Q.
\Ee
And we set the smallest constant $C$ in \eqref{e:16a1} as $[w]_{A_1}$, which is called  the $A_1$ bound of $w$.  We also set $A_\infty(\R^d)=\bigcup_{1\leq q<\infty}A_q(\R^d)$.
\end{definition}

It is easy to see that an equivalent definition of $A_1(\R^d)$ weight is that $M(w)\leq Cw(x)$, where $M$ is the Hardy-Littlewood maximal operator. Recall the following basic fact about $A_p(\R^d)$ weight (see \cite{Gra249}):
\Bes
A_p(\R^d)\subsetneq A_q(\R^d), \ \text{if}\ 1\leq p< q\leq\infty.
\Ees

\begin{lemma}[see \cite{Jou83} or \cite{LOPTT09}]\label{l:16fs}
Suppose that $w\in A_\infty(\R^d)$. Let $0<\del,q<\infty$.  Then there exists a constant $C$ depends only on $w,\del,q$ such that
\Bes
\int_{\R^d}[M_\del f(x)]^qw(x)dx\leq C\int_{\R^d}[M_\del^{\sharp}f(x)]^qw(x)dx
\Ees
holds for any function provided that the left side integral is finite. Here $M_\del$, $M_\del^{\sharp}$ are the Hardy-Littlewood maximal operator with order $\del$ and the maximal sharp function operator, which are defined as
\Bes
\begin{split}
M_\del(f)(x)&=\sup\limits_{r>0}\Big(\frac 1{|Q(x,r)|}\int_{Q(x,r)}|f(y)|^\del dy\Big)^{\fr{1}{\del}},\\ M_\del^{\sharp}(f)(x)&=\sup_{Q\ni x}\inf_{c}\Big(\fr{1}{|Q|}\int_{Q}|f(z)-c|^\del dz\Big)^{\fr{1}{\del}},
\end{split}
\Ees
where $Q$ is a cube in $\R^d$ and $Q(x,r)$ is a cube with center $x$ and sidelength $r$.
\end{lemma}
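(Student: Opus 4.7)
The natural approach is the classical Fefferman--Stein good-$\lambda$ argument, upgraded to the weighted setting via the $A_\infty$ property. The plan is to compare the distribution functions of $M_\del f$ and $M_\del^\sharp f$ at the Lebesgue-measure level, transfer the comparison to $w$-measure using the quantitative $A_\infty$ property, and then integrate in $\lambda$ with a small-constant absorption.

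First I would establish the unweighted good-$\lambda$ inequality: there is a dimensional constant $C_0$ such that for every $\ga>0$ and every $\la>0$,
\[
\abs{\set{x:M_\del f(x)>2\la,\ M_\del^\sharp f(x)\le\ga\la}}\le C_0\,\ga^\del\,\abs{\set{x:M_\del f(x)>\la}}.
\]
The proof is a Calder\'on--Zygmund decomposition of $\set{M_\del f>\la}$ into a disjoint family of maximal dyadic cubes $\set{Q_j}$, on each of which the average of $|f|^\del$ on the parent cube is still controlled by $\la^\del$. Inside each $Q_j$ one picks, if possible, a point $x_j$ at which $M_\del^\sharp f(x_j)\le\ga\la$ (otherwise that cube contributes nothing to the left-hand side) and splits $M_\del f$ into a ``far'' part controlled by the parent-cube average (hence by $\la$) and a ``local'' part; choosing $c$ in the definition of $M_\del^\sharp$ to be the relevant constant on a slight enlargement of $Q_j$ yields an $L^\del$ bound of order $C\ga^\del\abs{Q_j}$ for the bad set in $Q_j$, and summing gives the claim.

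Second, I would invoke the quantitative $A_\infty$ property of $w$: there exist $\vth,C'>0$ depending only on $[w]_{A_\infty}$ such that $\abs{E}/\abs{Q}\le\eta$ implies $w(E)/w(Q)\le C'\eta^\vth$. Applying this to each $Q_j$ and summing upgrades the previous inequality to
\[
w\bigl(\set{M_\del f>2\la,\ M_\del^\sharp f\le\ga\la}\bigr)\le C_1\,\ga^{\del\vth}\,w\bigl(\set{M_\del f>\la}\bigr).
\]
The final step expands $\|M_\del f\|_{L^q(\R^d,w)}^q$ by the layer-cake formula, uses the trivial bound
\[
w\bigl(\set{M_\del f>2\la}\bigr)\le w\bigl(\set{M_\del^\sharp f>\ga\la}\bigr)+w\bigl(\set{M_\del f>2\la,\ M_\del^\sharp f\le\ga\la}\bigr),
\]
plugs in the weighted good-$\la$ inequality, and rescales $\la$ to obtain
\[
\|M_\del f\|_{L^q(\R^d,w)}^q\le C\,2^q\ga^{\del\vth}\|M_\del f\|_{L^q(\R^d,w)}^q+C\,\ga^{-q}\|M_\del^\sharp f\|_{L^q(\R^d,w)}^q.
\]
Choosing $\ga$ small enough that $C2^q\ga^{\del\vth}<1/2$ and using the hypothesis $\|M_\del f\|_{L^q(\R^d,w)}<\infty$ to absorb the first term on the left yields the lemma.

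The main obstacle is two-fold. First, the good-$\lambda$ step requires a careful choice of the constant $c$ on each Calder\'on--Zygmund cube so that the power $\del$ on the right-hand side is exactly what is needed for the $A_\infty$ absorption to function; this is delicate because $M_\del^\sharp$ measures oscillation in $L^\del$ but $M_\del$ measures averages of $|f|^\del$. Second, the a priori finiteness of $\|M_\del f\|_{L^q(\R^d,w)}$ is essential for the absorption, and a genuine proof must first establish the inequality for a truncated or approximated $f$ (for instance, $f$ bounded with compact support) and then pass to the limit by monotone convergence.
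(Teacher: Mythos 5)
The paper does not prove this lemma; it simply cites Journ\'e \cite{Jou83} and Lerner--Ombrosi--P\'erez--Torres--Trujillo-Gonz\'alez \cite{LOPTT09}, where the statement is indeed established by the Fefferman--Stein good-$\lambda$ method you outline. Your three-step plan (unweighted good-$\lambda$ comparison, transfer to $w$-measure via the quantitative $A_\infty$ small-ratio property, layer-cake integration with absorption under the a priori finiteness hypothesis) is precisely the argument in those references, and you correctly identify the two places that need care: the subtracted constant must be chosen on each Calder\'on--Zygmund cube so that the Chebyshev step yields exactly the factor $\gamma^\del$, and the absorption step is vacuous without the finiteness assumption. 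Two small points worth tightening if you wrote this out in full: first, when $\del>1$ the operator $M_\del$ is not sublinear with constant $1$, so the threshold $2\la$ in your good-$\lambda$ inequality should be replaced by $K\la$ for a constant $K=K(\del,d)>1$, which costs only a harmless change of constants; second, the Whitney cubes of $\{M_\del f>\la\}$ and the maximal dyadic cubes in the Calder\'on--Zygmund decomposition of $|f|^\del$ at height $\la^\del$ are not literally the same family, and the standard write-ups go through the dyadic maximal function and then compare the dyadic and non-dyadic superlevel sets; your sketch elides this but it is routine. Alternatively, once the case $0<\del\le1$ is in hand, the case $\del\ge1$ follows from it applied to $|f|^\del$ together with the pointwise inequality $M^\sharp(|f|^\del)\lesssim (M_\del^\sharp f)^\del$, which some treatments (including \cite{LOPTT09}) use to avoid redoing the good-$\lambda$ step for large $\del$.
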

Next we state some properties of a special maximal function introduced firstly by Mary Weiss (see \cite{CCal75}), which is defined as
$$\M(\nabla A)(x)=\sup_{h\in\R^d\setminus\{0\}}\fr{|A(x+h)-A(x)|}{|h|}.$$
\begin{lemma}\label{l:mw}
Suppose that $w\in A_{p/d}(\R^d)$ with $p>d$. Let $\nabla A\in L^p(\R^d,w)$. Then $\M$ is bounded on $L^p(\R^d,w)$, that is
$$\|\M(\nabla A)\|_{L^p(\R^d,w)}\lc\|\nabla A\|_{L^p(\R^d,w)}.$$
\end{lemma}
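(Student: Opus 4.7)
The plan is to establish a pointwise estimate of the form $\M(\nabla A)(x)\lc M_q(\nabla A)(x)$ for some exponent $q$ slightly larger than $d$, where $M_qf(x):=\bigl(M(|f|^q)(x)\bigr)^{1/q}$ is the $L^q$ Hardy--Littlewood maximal function, and then to deduce the weighted $L^p$ bound by combining this pointwise control with the openness (self-improvement) property of the Muckenhoupt class $A_{p/d}$ and the $L^{p/q}(w)$-boundedness of $M$.

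First I would prove the pointwise bound. Fix $x\in\R^d$ and $h\neq 0$, set $r=|h|$, and take $B=B(x+h/2,r)$, so that both $x$ and $x+h$ lie in $B$ and $|B|\approx r^{d}$. The classical Riesz-potential representation on a ball gives, for every $y\in B$,
\Bes
\Big|A(y)-\fr{1}{|B|}\int_B A(z)\,dz\Big|\lc\int_B\fr{|\nabla A(z)|}{|y-z|^{d-1}}\,dz.
\Ees
Applying this at $y=x$ and $y=x+h$ and using the triangle inequality yields
\Bes
|A(x+h)-A(x)|\lc\int_B\fr{|\nabla A(z)|}{|x-z|^{d-1}}\,dz+\int_B\fr{|\nabla A(z)|}{|x+h-z|^{d-1}}\,dz.
\Ees
For any $q>d$ the H\"older conjugate satisfies $q'<d/(d-1)$, so the singular kernel $|y-z|^{-(d-1)}$ lies in $L^{q'}(B)$ with $\bigl\||y-z|^{-(d-1)}\bigr\|_{L^{q'}(B)}\lc r^{1-d/q}$. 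H\"older's inequality, combined with the elementary bound $\|\nabla A\|_{L^q(B)}\lc r^{d/q}M_q(\nabla A)(x)$ (valid since $x\in B$ and $B$ has radius $r$), produces
\Bes
|A(x+h)-A(x)|\lc r\cdot M_q(\nabla A)(x).
\Ees
Dividing by $|h|=r$ and taking the supremum over $h\neq0$ yields $\M(\nabla A)(x)\lc M_q(\nabla A)(x)$ pointwise.

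For the weighted conclusion I would invoke the openness of Muckenhoupt classes: since $w\in A_{p/d}$ with $p/d>1$, there exists $\eps>0$ for which $w\in A_{p/d-\eps}$. Setting $q:=pd/(p-\eps d)$ gives $q>d$ and $p/q=p/d-\eps>1$, so $w\in A_{p/q}$ and $M$ is bounded on $L^{p/q}(\R^d,w)$. Consequently
\Bes
\|M_q(\nabla A)\|_{L^p(\R^d,w)}^p=\int_{\R^d}\bigl[M(|\nabla A|^q)\bigr]^{p/q}w\,dx\lc\int_{\R^d}|\nabla A|^pw\,dx,
\Ees
which combined with the pointwise bound of the previous paragraph finishes the proof. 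The main technical step is establishing the Morrey-type pointwise estimate; once a strict exponent $q>d$ has been extracted from the sharp condition $p>d$ via openness of $A_{p/d}$, the weighted conclusion is essentially automatic.
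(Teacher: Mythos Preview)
Your proof is correct and follows essentially the same strategy as the paper: both establish the pointwise bound $\M(\nabla A)(x)\lc M_q(\nabla A)(x)$ for some $q>d$ and then exploit the openness of $A_{p/d}$ to choose $q$ with $p/q=p/d-\eps$, concluding via the $L^{p/q}(w)$-boundedness of $M$. The only cosmetic difference is that the paper quotes \cite[Lemma~1.4]{CCal75} for the pointwise estimate, whereas you supply a self-contained Poincar\'e/Riesz-potential argument on a ball.
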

\begin{proof}
By using the dense argument, it is sufficient to consider $A$ as a $C^\infty$ function with compact support. Then by the result of \cite[Lemma 1.4]{CCal75}, we get that for any $q>d$,
$$\fr{|A(x)-A(y)|}{|x-y|}\lc\Big(\fr{1}{|x-y|^d}\int_{|x-z|\leq 2|x-y|}|\nabla A(z)|^{q}dz\Big)^{\fr{1}{q}}.$$

Since $w\in A_{p/d}(\R^d)$, by the revers H\"older inequality of $A_{p/d}(\R^d)$ weight (see \cite{Gra249}) and its definition, there exist $\eps>0$ such that $w\in A_{p/d-\eps}(\R^d)$ and $p/d-\eps\geq1$. Therefore we may choose $q$ in the above inequality such that $p/d-\eps=p/q$ and $d<q<p$. Applying the fact that the Hardy-Littlewood maximal operator $M$ maps $L^s(\R^d,w)$ to itself if $1<s\leq\infty$ and $w\in A_s(\R^d)$, we may get
$$\|\M(\nabla A)\|_{L^p(\R^d,w)}\lc\|M_q(\nabla A)\|_{L^p(\R^d,w)}=\|M(|\nabla A|^q)\|^{\fr{1}{q}}_{L^{p/q}(\R^d,w)}\lc\|\nabla A\|_{L^p(\R^d,w)},$$
which completes the proof.
\end{proof}
\begin{lemma}\label{l:11md}
Let $A$ be a function such that $\nabla A\in L^{d,1}(\R^d,w)$, the Lorentz space with weight $w\in A_1(\R^d)$. Then for any $\lam>0$,
$$w(\{x\in\R^d:\M(\nabla A)(x)>\lam\})\lc\lam^{-d}\|\nabla A\|^d_{L^{d,1}(\R^d,w)}.$$
\end{lemma}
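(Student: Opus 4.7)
The plan is to prove the stronger norm bound $\|\M(\nabla A)\|_{L^{d,\infty}(\R^d,w)} \lc \|\nabla A\|_{L^{d,1}(\R^d,w)}$, from which the lemma follows by Chebyshev's inequality. I would upgrade the pointwise estimate used in the proof of Lemma~\ref{l:mw} to a Lorentz version. The pointwise Poincar\'e inequality gives, for any ball $B$ containing both $x$ and $y$,
$$|A(x)-A(y)| \lc \int_{B}\frac{|\nabla A(z)|}{|x-z|^{d-1}}\,dz + \int_{B}\frac{|\nabla A(z)|}{|y-z|^{d-1}}\,dz.$$
Choosing $B = B(x, 2|x-y|)$ (and enlarging to $B(y,3|x-y|) \supseteq B$ for the second integral), and applying H\"older's inequality in Lorentz spaces together with the fact that $\||z-x|^{-(d-1)}\chi_{B(x,r)}\|_{L^{d',\infty}}$ is a dimensional constant independent of $r$, I obtain, with $F = |\nabla A|$,
$$\M(\nabla A)(x) \lc TF(x) := \sup_{r>0}\frac{1}{r}\|F\chi_{B(x,r)}\|_{L^{d,1}(\R^d)}.$$

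Next I dyadically decompose $F = \sum_{k\in\Z} F\chi_{E_k}$ using level sets $E_k = \{2^{k-1} < F \leq 2^k\}$. Since $F\chi_{E_k} \leq 2^k \chi_{E_k}$ and $\|\chi_E\|_{L^{d,1}} \approx |E|^{1/d}$, one has $\|F\chi_{E_k}\chi_{B(x,r)}\|_{L^{d,1}} \lc 2^k |E_k \cap B(x,r)|^{1/d}$. Dividing by $r \sim |B(x,r)|^{1/d}$ and taking the supremum over $r$ yields $T(F\chi_{E_k})(x) \lc 2^k [M\chi_{E_k}(x)]^{1/d}$, where $M$ is the Hardy--Littlewood maximal operator. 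Summing and using the subadditivity of $T$ (from the triangle inequality for the Lorentz norm on each ball) then gives the pointwise bound
$$\M(\nabla A)(x) \lc \sum_{k \in \Z} 2^k [M\chi_{E_k}(x)]^{1/d}.$$

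For the final step, since $d \geq 2$, $L^{d,\infty}(\R^d,w)$ is a Banach space under an equivalent norm, so the triangle inequality brings the sum outside the norm. Combining this with the homogeneity identity $\|g^{1/d}\|_{L^{d,\infty}(w)} = \|g\|_{L^{1,\infty}(w)}^{1/d}$ and the weighted weak-$(1,1)$ bound for $M$ under the assumption $w \in A_1(\R^d)$, I conclude
$$\|\M(\nabla A)\|_{L^{d,\infty}(w)} \lc \sum_{k \in \Z} 2^k \|M\chi_{E_k}\|_{L^{1,\infty}(w)}^{1/d} \lc \sum_{k \in \Z} 2^k w(E_k)^{1/d} \approx \|\nabla A\|_{L^{d,1}(w)},$$
where the final equivalence is the standard atomic comparison for the weighted Lorentz norm; Chebyshev's inequality then produces the desired weak-type estimate. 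The main obstacle is establishing the Lorentz-space refinement of the pointwise bound in the first display, since the $L^q$-average version available from Lemma~\ref{l:mw} (for $q>d$) does not directly yield an $L^{d,\infty}(w)$ bound after the dyadic decomposition; once the Lorentz pointwise estimate is in place, the remaining steps are routine applications of sublinearity and the weak-$(1,1)$ bound for $M$.
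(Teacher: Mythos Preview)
Your proof is correct, and the core mechanism---bounding the Lorentz-maximal operator $T(F)(x)=\sup_{r>0}r^{-1}\|F\chi_{B(x,r)}\|_{L^{d,1}}$ by testing on characteristic functions, using $T\chi_E\lc(M\chi_E)^{1/d}$, and invoking the weighted weak-$(1,1)$ bound for $M$ under $w\in A_1$---is exactly what the paper does as well (the paper calls this operator $\Lambda$ and phrases the reduction to characteristic functions as a ``restricted type $(d,d)$'' argument rather than a dyadic level-set decomposition, but the two are equivalent).

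Where you genuinely diverge from the paper is in the reduction step. The paper writes $A=\mathbb{K}*f$ with $f=C_d\sum_j R_j(\partial_j A)$, then splits $A(x+h)-A(x)$ into three pieces $I+II+III$ according to the integration region; term $I$ produces the operator $\Lambda$, while $II$ and $III$ are handled separately via the Hardy--Littlewood maximal function and the maximal Riesz transforms $R_j^*$. Your route is more economical: you apply the pointwise Poincar\'e--Riesz potential inequality directly to $F=|\nabla A|$, so that only the single operator $T$ appears, with no need for the Riesz-transform representation, the three-term split, or the separate treatment of $M$ and $R_j^*$. What the paper's approach buys in return is a byproduct recorded as \eqref{e:16adiff}, a pointwise bound on $|A(x)-A(y)|/|x-y|$ in terms of $M(\nabla A)$ and $T_{i,j}^*(\partial_j A)$ evaluated at $x$ and $y$, which is reused later in Propositions~\ref{e:16unwrgeq1} and \ref{p:16maxinf}; your argument does not produce that formula, but it is not needed for the present lemma.
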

\begin{proof}
By the dense argument,
it is sufficient to consider $A$ as a smooth function with compact support. Using the formula \cite[page 125, (17)]{Ste70}, one may write
\Bes
A(x)=C_d\sum_{j=1}^d\int_{\R^d}\fr{x_j-y_j}{|x-y|^d}\pari_j A(y)dy=\mathbb{K}*f(x)
\Ees
where $\mathbb{K}(x)=1/|x|^{d-1}$, $f=C_d\sum_{j=1}^dR_j(\pari_j A)$ with $R_j$s the Riesz transforms.
Notice that $w\in A_1(\R^d)\subsetneq A_p(\R^d)$ for all $1<p\leq\infty$. By applying the general form of the Marcinkiewicz interpolation theorem (see \cite[page 197, Theorem 3.15]{Ste71}), we obtain that  the Riesz transform $R_j$ maps $L^{d,1}(\R^d,w)$ to itself. Then it is easy to see that
$$\|f\|_{L^{d,1}(\R^d,w)}\lc\|\nabla A\|_{L^{d,1}(\R^d,w)}.$$
Hence to finish the proof, it is sufficient to prove that
\Be\label{e:11weissmaximal}
w(\{x\in\R^d:\M(\nabla A)(x)>\lam\})\lc\lam^{-d}\|f\|^d_{L^{d,1}(\R^d,w)}
\Ee
with $A=\mathbb{K}*f$. Below we shall show that for any $x\in\R^d$, the following estimate
$$|A(x+h)-A(x)|\lc |h|T(f)(x)$$
holds uniformly for $h\in\R^d\setminus\{0\}$ where $T$ is an operator maps $L^{d,1}(\R^d,w)$ to $L^{d,\infty}(\R^d,w)$. Once we show this, we get \eqref{e:11weissmaximal} and hence finish the proof of Lemma \ref{l:11md}. We write
\Bes
\begin{split}
A(&x+h)-A(x)\\
&=\int_{|x-y|\leq2|h|}|x+h-y|^{-d+1}f(y)dy-\int_{|x-y|\leq2|h|}|x-y|^{-d+1}f(y)dy\\
&\ \ \ \ +\int_{|x-y|>2|h|}\Big(|x+h-y|^{-d+1}-|x-y|^{-d+1}\Big)f(y)dy\\
&=I+II+III.
\end{split}
\Ees

Consider $I$ firstly. Observe that $\mathbb{K}\in L^{d',\infty}(\R^d)$ where $d'=d/(d-1)$. Set $B(x,r)=\{y\in\R^d:|x-y|\leq r\}$. Applying the rearrangement inequality (see \cite[page 74, Exercise 1.4.1]{Gra249}), we obtain that
\Bes
\begin{split}
|I|&\leq\int_{\R^d}\mathbb{K}(x+h-y)|f\chi_{B(x,2|h|)}(y)|dy\leq\int_0^\infty \mathbb{K}^*(s)(f\chi_{B(x,2|h|)})^*(s)ds\\
&\leq\Big(\int_0^\infty (f\chi_{B(x,2|h|)})^*(s)s^{\fr{1}{d}}\fr{ds}{s}\Big)\cdot\sup_{s>0}\Big(\mathbb{K}^*(s)s^{\fr{1}{d'}}\Big)\\
&\lc\|f\chi_{B(x,2|h|)}\|_{L^{d,1}(\R^d)}\|\mathbb{K}\|_{L^{d',\infty}(\R^d)},
\end{split}
\Ees
where $f^*$ stands for the decreasing rearrangement of $f$.
Applying the definition of Lorentz space, we may get that $\|\chi_E\|_{L^{d,1}(\R^d)}=\|\chi_E\|_{L^d(\R^d)}$ holds for any characteristic function $\chi_E$ of set $E$ of finite measure, thus $\|\chi_{B(x,2|h|)}\|_{L^{d,1}(\R^d)}=C_d|h|$. Then we obtain that
$$|I|\lc |h|\Lam(f)(x),\ \ \text{where}\ \ \Lam(f)(x)=\sup_{r>0}\fr{\|f\chi_{B(x,r)}\|_{L^{d,1}(\R^d)}}{\|\chi_{B(x,r)}\|_{L^{d,1}(\R^d)}}.$$
In the following it is sufficient to show that the operator $\Lam$ maps $L^{d,1}(\R^d,w)$ to $L^{d,\infty}(\R^d,w)$. Note that $L^{d,1}(\R^d,w)$ is a Banach space (see \eg \cite[page 204, Theorem 3.22]{Ste71}), it suffices to show that $\Lam$ is restricted of type $(d,d)$, thus is $\|\Lam(\chi_E)\|_{L^{d,\infty}(\R^d,w)}\lc w(E)^{\fr{1}{d}}$ (see \eg \cite[page 62, Lemma 1.4.20]{Gra249}). However
in this case, the proof is equivalent to show that
$$w(\{x\in\R^d: M(\chi_E)(x)>\lam\})\lc\lam^{-1}\|\chi_E\|_{L^1(\R^d,w)},$$
where $M$ is the Hardy-Littlewood maximal operator. Since $M$ is weighted weak type (1,1) if $w\in A_1(\R^d)$, hence we prove that $\Lam$ maps $L^{d,1}(\R^d,w)$ to $L^{d,\infty}(\R^d,w)$.

Next consider $II$. Observe that the kernel $k(x):=\eps^{-1}|x|^{-d+1}\chi_{\{|x|\leq \eps\}}$ is radial non-increasing and $L^1$ integrable in $\R^d$, we get
$$|II|\lc \|k\|_{L^1(\R^d)}|h|M(f)(x).$$
Notice that $L^{p,1}(\R^d,w)\subset L^p(\R^d,w)$ and $M$ maps $L^p(\R^d,w)$ to itself for $1<p<\infty$. Hence we get that $M$ maps $L^{d,1}(\R^d,w)$ to $L^{d,\infty}(\R^d,w)$.

Finally consider $III$. Notice that it suffices to consider $|x-y|>2|h|$. Then applying the Taylor expansion of $|x-y+h|^{-d+1}$, we get
\Be\label{e:11taylor}
\fr{1}{|x-y+h|^{d-1}}-\fr{1}{|x-y|^{d-1}}=(-d+1)\sum_{j=1}^dh_j\fr{x_j-y_j}{|x-y|^{d+1}}+R(x,y,h)
\Ee
where  the remainder term $R(x,y,h)$ in the Taylor expansion satisfies
$$|R(x,y,h)|\leq C|h|^2|x-y|^{-d-1}\ \text{if } |x-y|>2|h|.$$
Inserting \eqref{e:11taylor} into the term $III$ with the above estimate of $R(x,y,h)$, we conclude that
$$|III|\lc |h|\sum_{j=1}^d R_j^*(f)(x)+|h|^2\int_{|x-y|>2|h|}|x-y|^{-d-1}|f(y)|dy$$
where $R_j^*$ is the maximal Riesz transform defined by
$$R_j^*(f)(x)=\sup_{\eps>0}\Big|\int_{|x-y|>\eps}\fr{x_j-y_j}{|x-y|^{d+1}}f(y)dy\Big|.$$
Since $R_j^*$ is bounded on $L^p(\R^d,w)$ for $1<p<\infty$, we immediately obtain that $R_j^*$ maps $L^{d,1}(\R^d,w)$ to $L^{d,\infty}(\R^d,w)$.
The second term which controls $III$ can be dealt similar to that of the estimate of $II$ once we observe that $\eps|x|^{-d-1}\chi_{\{|x|>\eps\}}$ is radial non-increasing and $L^1$ integrable.
\end{proof}

In the following, we introduce a weighted Sobolev inequality and a key weighted weak type estimate for $\nabla A\in L^p(\R^d,w)$ with $1\leq p<d$.
Define the weighted Hardy-Littlewood maximal operator of order $p$ $M_{w,p}$ and  the weighted maximal operator $\mathfrak{M}_{w,s}$ by
\Bes
\begin{split}
M_{w,p}(f)(x)&=\sup\limits_{r>0}\bigg(\frac 1{w(Q(x,r))}\int_{Q(x,r)}|f(y)|^pw(y)dy\bigg)^{1/p},\\
\mathfrak{M}_{w,s}(\nabla A)(x)&=\sup_{r>0}\Big(\fr{1}{w(Q(x,r))}\int_{Q(x,r)}\Big|\fr{A(x)-A(y)}{r}\Big|^{s}w(y)dy\Big)^{1/s},\\
\end{split}
\Ees
where $Q(x,r)$ is a cube with center $x$ and sidelength $r$.

\begin{lemma}[see \cite{DS90}]\label{l:16sobolev}
If $w$ is an $A_1(\R^d)$ weight, then the following weighted Sobolev inequality $$\Big(\int_{\R^d}|g(x)|^sw(y)dy\Big)^{\fr{1}{s}}\leq C\Big(\int_{\R^d}\big[w(x)^{-\fr{1}{d}}|\nabla g(x)|\big]^pw(x)dx\Big)^{\fr{1}{p}}$$
holds for $1\leq p<d$ and $1/s=1/p-1/d$, where $g$ is a $C^1$ function with compact support and the constant $C$ does not depend on $g$.
\end{lemma}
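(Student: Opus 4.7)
The plan is to reduce the claimed estimate to a weighted bound for the Riesz potential $I_1h(x):=\int_{\R^d}|x-y|^{-(d-1)}h(y)dy$, and then to establish that bound by a Hedberg-type pointwise argument. Starting from the classical representation
\Bes
g(x)=C_d\sum_{j=1}^d\int_{\R^d}\fr{x_j-y_j}{|x-y|^d}\pari_jg(y)\,dy
\Ees
(the same identity already invoked in the proof of Lemma~\ref{l:11md}), one gets the pointwise bound $|g(x)|\lc I_1(|\nabla g|)(x)$. Substituting $f:=w^{-1/d}|\nabla g|$, so that the right hand side of the proposed inequality equals $\|f\|_{L^p(\R^d,w)}$, the task reduces to showing
\Bes
\|I_1(w^{1/d}f)\|_{L^s(\R^d,w)}\lc\|f\|_{L^p(\R^d,w)},\qquad\fr{1}{s}=\fr{1}{p}-\fr{1}{d}.
\Ees

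To prove this, fix $x\in\R^d$, $R>0$, and split $I_1(w^{1/d}f)(x)$ at $|x-y|=R$. On the inner piece, decompose dyadically and use the pointwise $A_1$ bound $w(y)^{-1}\lc |B|/w(B)$ in the form $w(y)^{1/d}=w(y)\,w(y)^{-(1-1/d)}\lc(|B|/w(B))^{1-1/d}w(y)$ on each ball $B=B(x,2^k)$; combined with the reverse-doubling bound $w(B(x,2^k))\lc(2^k/R)^{\beta}w(B(x,R))$ for some $\beta>0$ (valid since $A_1\subset A_\infty$), this yields
\Bes
\int_{|x-y|\le R}\fr{w(y)^{1/d}f(y)}{|x-y|^{d-1}}\,dy\lc M_{w,1}f(x)\cdot w(B(x,R))^{1/d}.
\Ees
On the outer piece, a dyadic decomposition combined with H\"older's inequality on each shell with respect to Lebesgue measure and exponents $(p,p')$, together with the same $A_1$ bound on $w^{-1}$ applied to $\int_B w^{(1/d-1/p)p'}$, produces (after cancellation of the kernel factor $|x-y|^{-(d-1)}$ against the emerging $|B|^{d-1}$)
\Bes
\int_{|x-y|>R}\fr{w(y)^{1/d}f(y)}{|x-y|^{d-1}}\,dy\lc\|f\|_{L^p(\R^d,w)}\cdot w(B(x,R))^{1/d-1/p},
\Ees
convergence of the dyadic tail being ensured by the polynomial lower growth $w(B(x,r))\gtrsim(r/R)^{\beta}w(B(x,R))$ for $r\ge R$. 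Optimizing $R$ via the balance $w(B(x,R))^{1/p}\approx\|f\|_{L^p(w)}/M_{w,1}f(x)$ combines the two estimates into
\Bes
I_1(w^{1/d}f)(x)\lc\bigl(M_{w,1}f(x)\bigr)^{1-p/d}\|f\|_{L^p(\R^d,w)}^{p/d}.
\Ees

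Raising this bound to the $s$-th power and using the algebraic identities $s(1-p/d)=p$ and $sp/d=s-p$, one obtains
\Bes
\int_{\R^d}\bigl(I_1(w^{1/d}f)\bigr)^sw\,dx\lc\|f\|_{L^p(w)}^{s-p}\int_{\R^d}(M_{w,1}f)^pw\,dx,
\Ees
and the proof is completed in the regime $1<p<d$ by the $L^p(w\,dx)$-boundedness of $M_{w,1}$, which holds for any $p>1$ and any doubling measure (here $w\,dx$ is doubling because $A_1\subset A_\infty$). The endpoint $p=1$ requires a separate argument, since $I_1$ is only of weak type $(1,d/(d-1))$: here one must exploit the gradient structure of $\nabla g$, for instance by a weighted Federer-Fleming coarea/isoperimetric approach, or by following the construction given in \cite{DS90}.

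The main obstacle in this scheme is the precise bookkeeping of the exponents in the Hedberg estimate: one must simultaneously use the pointwise $A_1$ bound on $w^{-1}$ (to evaluate local integrals of negative powers of $w$) and the reverse-doubling of $A_\infty$ weights (to collapse the dyadic geometric series on both sides into clean single-ball factors $w(B(x,R))^{1/d}$ and $w(B(x,R))^{1/d-1/p}$). It is precisely because the $A_1$-derived exponent $|B|^{1-1/d}$ matches the singularity $|x-y|^{d-1}$ of the kernel that the two sides of the optimization balance and the Hedberg bound has clean form; this numerical coincidence is what makes the $w^{-1/d}$ correction on the right hand side of the Sobolev inequality the correct one for $A_1$ weights.
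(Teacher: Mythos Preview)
The paper does not prove this lemma at all; it is quoted verbatim from David--Semmes \cite{DS90}, so there is no ``paper's proof'' to compare against.  Your Hedberg-type argument is a correct and self-contained proof for the range $1<p<d$: the pointwise $A_1$ bound $w(y)^{-1}\lc|B|/w(B)$ controls the negative powers of $w$ that arise, the $A_\infty$ comparison $w(E)/w(B)\lc(|E|/|B|)^\delta$ (applied with $E$ the smaller ball) makes both dyadic tails geometric, and the $L^p(w\,dx)$-boundedness of $M_{w,1}$ for doubling $w\,dx$ closes the estimate.  One small point worth making explicit is that the optimization in $R$ uses $w(\R^d)=\infty$, which is automatic for nontrivial $A_1$ weights since $Mw\leq Cw$ forces $w\notin L^1$.

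The endpoint $p=1$, which you defer, is not merely cosmetic for this paper: Lemma~\ref{l:12qleqd} invokes the present inequality for the full range $1\le p<d$, and the value $p=1$ is what is actually used when some $q_i=1$ in the weak-type estimates of Theorem~\ref{t:12}(ii) (in particular for the endpoint \eqref{e:12mulendpoint}).  So while your reduction to \cite{DS90} or to a weighted isoperimetric/coarea argument is legitimate, that case must ultimately be supplied for the paper's applications to go through.
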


\begin{lemma}\label{l:12qleqd}
Let $w\in A_1(\R^d)$ and $\nabla A\in L^p(\R^d,w)$ with $1\leq p<d$. Set $1/s=1/p-1/d$. Then we have
\Bes
w(\{x\in\R^d:\mathfrak{M}_{w,s}(\nabla A)(x)>\lam\})\lc\lam^{-p}\|\nabla A\|_{L^p(\R^d,w)}^p.
\Ees
\end{lemma}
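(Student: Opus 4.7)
By a standard density argument, assume $A\in C_c^1(\R^d)$. The strategy is to prove the pointwise majorization
\Bes
\mathfrak{M}_{w,s}(\nabla A)(x)\lc M_{w,p}(|\nabla A|)(x)
\Ees
and then conclude by the weak-type $(p,p)$ boundedness of $M_{w,p}$ with respect to $w\,dx$; this in turn follows from the weak $(1,1)$ inequality for the centered weighted Hardy--Littlewood maximal operator $M_{w,1}$ on the doubling measure space $(\R^d,w\,dx)$ (standard Vitali covering), since $M_{w,p}(f)^p=M_{w,1}(|f|^p)$.

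To prove the pointwise bound, fix a cube $Q=Q(x,r)$ centered at $x$ and set $A_{Q,w}:=w(Q)^{-1}\int_Q A\,w$. The triangle inequality $|A(x)-A(y)|\le|A(x)-A_{Q,w}|+|A(y)-A_{Q,w}|$ reduces matters to controlling the two parts by $r\,M_{w,p}(|\nabla A|)(x)$. For the integral contribution I would establish the weighted Poincar\'e--Sobolev inequality on cubes,
\Bes
\Big(\fr{1}{w(Q)}\int_Q|A-A_{Q,w}|^s w\Big)^{1/s}\lc r\Big(\fr{1}{w(Q)}\int_{2Q}|\nabla A|^p w\Big)^{1/p},
\Ees
by applying Lemma~\ref{l:16sobolev} to $g:=(A-A_{2Q,w})\phi$, where $\phi\in C_c^\infty$ is a cutoff with $\phi\equiv1$ on $Q$, $\supp\phi\subseteq 2Q$, and $|\nabla\phi|\lc r^{-1}$. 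The $A_1$ condition gives $w(y)^{1-p/d}\lc(|Q|/w(Q))^{p/d}w(y)$ a.e.\ on $2Q$, which converts the right-hand weight $w^{1-p/d}$ in Lemma~\ref{l:16sobolev} back to $w$; the surplus factor $(|Q|/w(Q))^{p/d}$ combines with $|Q|^{1/d}=r$ to produce exactly the $r$ on the right. The pointwise term $|A(x)-A_{Q,w}|$ is handled by telescoping along $Q_k=Q(x,2^{-k}r)$, $k\ge 0$: each difference $|A_{Q_{k+1},w}-A_{Q_k,w}|$ is controlled (by H\"older and the doubling of $w$) in terms of the weighted $L^s$-average of $|A-A_{Q_k,w}|$ on $Q_k$, which the Poincar\'e--Sobolev estimate bounds by $C\cdot 2^{-k}r\,M_{w,p}(|\nabla A|)(x)$, and the geometric sum converges to $\lc r\,M_{w,p}(|\nabla A|)(x)$.

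\textbf{Main obstacle.} The crucial technical step is closing the Poincar\'e--Sobolev inequality. Differentiating $g$ produces an auxiliary error term $r^{-p}\int_{2Q}|A-A_{2Q,w}|^p w^{1-p/d}$; using only H\"older to pass from $L^p$ to $L^s$ yields a self-improving inequality of ratio $\approx 1$ (because $1/s=1/p-1/d$ forces $s/p-1=s/d$ and the scale factors cancel exactly), so the naive iteration does not close. To absorb this term one must invoke the classical weighted Poincar\'e inequality $\int_{2Q}|A-A_{2Q,w}|^p w\lc r^p\int_{2Q}|\nabla A|^p w$, valid for $A_1$ weights via a Fabes--Kenig--Serapioni chain-of-cubes argument or via a truncated Riesz-potential representation of $A-A_{2Q,w}$. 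With this ingredient in hand the cutoff error is absorbed, the Poincar\'e--Sobolev inequality follows, and the pointwise majorization, hence the lemma, is established.
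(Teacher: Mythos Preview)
Your approach is correct but takes a genuinely different route from the paper. The key divergence is in the choice of auxiliary function fed into Lemma~\ref{l:16sobolev}: you apply it to $g=(A-A_{2Q,w})\phi$, centered at the weighted average, whereas the paper applies it to $\phi(y)(A(x)-A(y))$, centered at the value $A(x)$ itself. In the paper's setup the cutoff error lives on the annulus $Q(x,2r)\setminus Q(x,r)$ and produces a remainder operator
\[
S_p(\nabla A)(x)=\Big(\frac{1}{w(Q(x,2r))}\int_{Q(x,2r)\setminus Q(x,r)}\Big|\frac{A(x)-A(y)}{r}\Big|^p w(y)\,dy\Big)^{1/p},
\]
which is then handled \emph{not} by a weighted Poincar\'e inequality but by the explicit Riesz-potential representation $A=C_d\sum_j \mathcal{K}_j*\partial_j A$: splitting $A(x)-A(y)$ into near/far pieces and Taylor-expanding the kernel shows $S_p(\nabla A)\lc M_p(\nabla A)+M_{w,p}(\nabla A)+\sum_{i,j}T_{i,j}^*(\partial_j A)$, each of which is weak $(p,p)$ on $L^p(w)$ for $w\in A_1$.

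What each approach buys: your route, once the Fabes--Kenig--Serapioni weighted Poincar\'e inequality is granted, yields the clean pointwise domination $\mathfrak{M}_{w,s}(\nabla A)\lc M_{w,p}(\nabla A)$ by a single maximal operator, and avoids any singular-integral machinery. The paper's route is more self-contained (it does not import FKS as a black box) and, importantly for the rest of the paper, the same Riesz-potential decomposition yields the pointwise inequality \eqref{e:16adiff}, which is reused several times in the proofs of Propositions~\ref{e:16unwrgeq1} and~\ref{p:16maxinf}. So the paper's argument does double duty, while yours is tidier for this lemma in isolation.
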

\begin{proof}
By using a standard limiting argument, we only need to consider $A$ as a $C^\infty$ function with compact support.
Fix a cube $Q(x,r)$. Choose a $C_c^\infty$ function $\phi$ such that $\phi(y)\equiv1$ if $y\in Q(x,r)$, supp$\phi\subset Q(x,2r)$ and $\|\nabla \phi\|_{L^\infty(\R^d)}\lc r^{-1}$. Consider the auxiliary function $\phi(y)(A(x)-A(y))$ where $x$ is fixed and $y$ is the variable. 
Using the weighted Sobolev inequality in Lemma \ref{l:16sobolev} and the property \eqref{e:16a1} of $A_1(\R^d)$ weight, one may get that
\Bes
\begin{split}
\Big(\int_{Q(x,r)}|A(x)-A(y)|^sw(y)dy\Big)^{\fr{1}{s}}
&\lc\Big[\int_{\R^d}\big[\nabla_y(\phi(y)(A(x)-A(y)))\big]^{{p}}w(y)^{1-\fr{p}{d}}dy\Big]^{\fr{1}{p}}\\
&\lc\Big(\int_{Q(x,2r)}\big|\nabla A(y)\big|^{{p}}w(y)^{1-\fr{p}{d}}dy\Big)^{\fr{1}{p}}\\
&\quad+\Big(\int_{Q(x,2r)\setminus Q(x,r)}\Big|\fr{A(x)-A(y)}{r}\Big|^{{p}}w(y)^{1-\fr{p}{d}}dy\Big)^{\fr{1}{p}}\\
&\lc\Big[\fr{w(Q(x,2r))}{|Q(x,2r)|}\Big]^{-\fr{1}{d}}\Big[\Big(\int_{Q(x,2r)}\big|\nabla A(y)\big|^{{p}}w(y)dy\Big)^{\fr{1}{p}}\\
&\quad+\Big(\int_{Q(x,2r)\setminus Q(x,r)}\Big|\fr{A(x)-A(y)}{r}\Big|^{{p}}w(y)dy\Big)^{\fr{1}{p}}\Big].
\end{split}
\Ees
The above estimate, via the doubling property of $w(x)dx$ (i.e. $w(2Q)\lc w(Q)$, see \cite{Gra249}) and $\fr{1}{s}=\fr{1}{p}-\fr{1}{d}$, yields that
$$\Big(\fr{1}{w(Q(x,r))}\int_{Q(x,r)}\Big|\fr{A(x)-A(y)}{r}\Big|^{s}w(y)dy\Big)^{\fr{1}{s}}\lc M_{w,p}(\nabla A)(x)+S_{p}(\nabla A)(x),$$
where
$$S_{p}(\nabla A)(x):=\Big[\fr{1}{w(Q(x,2r))}\int_{Q(x,2r)\setminus Q(x,r)}\Big|\fr{A(x)-A(y)}{r}\Big|^{{p}}w(y)dy\Big]^{\fr{1}{p}}.$$
Again using the fact that $w(x)dx$ satisfies the doubling property, one may see that the Hardy-Littlewood maximal operator $M_{w,1}$ with the weight $w$ is of weak type (1,1), thus is $M_{w,1}$ maps $L^1(\R^d,w)$ to $L^{1,\infty}(\R^d,w)$. Then we get that $M_{w,p}$ maps $L^p(\R^d,w)$ to $L^{p,\infty}(\R^d,w)$.  Therefore to complete the proof, it is enough to show that $S_{p}(\nabla A)(x)\lc T(\nabla A)(x)$ with $T$ mapping $L^p(\R^d,w)$ to $L^{p,\infty}(\R^d,w)$.

Below we give some explicit estimates of $A(x)-A(y)$ similar to that in the proof of Lemma \ref{l:11md}. By the formula given in \cite[page 125, (17)]{Ste70}, we may write
\Bes
A(x)=C_d\sum_{j=1}^d\int_{\R^d}\fr{x_j-y_j}{|x-y|^d}\pari_j A(y)dy.
\Ees

Split $A(x)-A(y)$ into three terms as follows,
\Be\label{e:16axy}
\begin{split}
A(&x)-A(y)\\
&=C_d\sum_{j=1}^d\Big[\int_{|x-z|\leq2|x-y|}\fr{x_j-z_j}{|x-z|^{d}}\pari_jA(z)dz-\int_{|x-z|\leq2|x-y|}\fr{y_j-z_j}{|y-z|^d}\pari_jA(z)dz\\
&\ \ \ \ +\int_{|x-z|>2|x-y|}\Big(\fr{x_j-z_j}{|x-z|^{d}}-\fr{y_j-z_j}{|y-z|^{d}}\Big)\pari_jA(z)dz\Big]\\
&=I(x)+II(x)+III(x).
\end{split}
\Ee
Plug the above three terms back into $S_p(\nabla A)(x)$ and define these three terms as $S_{p,1}(\nabla A)(x)$, $S_{p,2}(\nabla A)(x)$ and  $S_{p,3}(\nabla A)(x)$ respectively.

Let us first consider $S_{p,1}(\nabla A)(x)$. By applying the H\"older inequality,
$$|I(x)|^{{p}}\lc|x-y|^{p-1}\Big(\int_{|x-z|\leq2|x-y|}\fr{|\nabla A(z)|^p}{|x-z|^{d-1}}dz\Big).$$
Plugging the above inequality into $S_{p,1}(\nabla A)(x)$ with $|x-y|\approx r$, and then using the kernel $k(x)=\eps^{-1}|x|^{-d+1}\chi_{\{|x|\leq C\eps\}}$ is a radial non-increasing function and $L^1$ integrable in $\R^d$, we get that
\Be
\begin{split}
S_{p,1}(\nabla A)(x)&\lc \Big(r^{-1}\int_{|x-z|\lc r}\fr{|\nabla A(z)|^p}{|x-z|^{d-1}}dz\Big)^{\fr{1}{p}}\lc M_p(\nabla A)(x).
\end{split}
\Ee
It is easy to see that $M_p$ maps $L^p(\R^d,w)$ to $L^{p,\infty}(\R^d,w)$ with $w$ an $A_1(\R^d)$ weight is equivalent to that the Hardy-Littlewood maximal operator $M$ maps $L^1(\R^d,w)$ to $L^{1,\infty}(\R^d,w)$, which is however well known.

Next we consider $S_{p,2}(\nabla A)(x)$. By using the H\"older inequality to deal with $II(x)$ as those of $I(x)$, then applying $|x-y|\approx r$ and the Fubini theorem, we get
\Be
\begin{split}
S_{p,2}(\nabla A)(x)&\lc\Big[\fr{1}{w(Q(x,2r))}\int_{Q(x,2r)\setminus Q(x,r)}\fr{1}{r}\Big(\int_{|x-z|\lc r}\fr{|\nabla A(z)|^p}{|y-z|^{d-1}}dz\Big) w(y)dy\Big]^{\fr{1}{p}}\\
&\lc\Big[\fr{1}{w(Q(x,2r))}\int_{|x-z|\lc r}r^{-1}\Big(\int_{|y-z|\lc r}\fr{w(y)}{|y-z|^{d-1}}dy\Big)|\nabla A(z)|^pdz\Big]^{\fr{1}{p}}\\
&\lc\Big[\fr{1}{w(Q(x,2r))}\int_{|x-z|\lc r}M(w)(z)|\nabla A(z)|^pdz\Big]^{\fr{1}{p}}\\
&\lc\Big[\fr{1}{w(Q(x,2r))}\int_{|x-z|\lc r}|\nabla A(z)|^pw(z)dz\Big]^{\fr{1}{p}}\lc M_{w,p}(\nabla A)(x),
\end{split}
\Ee
where in the third inequality we use again the fact that the kernel function $k(x)={\eps}^{-1}{|x|^{-d+1}}\chi_{\{|x|\leq C\eps\}}$ is a radial non-increasing function and $L^1$ integrable in $\R^d$, the last second inequality follows from \eqref{e:16a1}. As showed previously,  $M_{w,p}$ maps $L^p(\R^d,w)$ to $L^{p,\infty}(\R^d,w)$.

We consider $S_{p,3}(\nabla A)(x)$. Set $\mathcal{K}_j(x)=\fr{x_j}{|x|^d}$. Notice that $|x-z|>2|x-y|$. Applying the Taylor expansion of $\mathcal{K}_j(x-z)$, we may get
\Bes
\mathcal{K}_j(x-z)-\mathcal{K}_j(y-z)=\sum_{i=1}^d(x_i-y_i)\pari_i \mathcal{K}_j(x-z)+R(x,y,z)
\Ees
where the Taylor expansion's remainder term $R(x,y,z)$ satisfies
$$|R(x,y,z)|\lc |x-y|^2|x-z|^{-d-1},\ \forall\ |x-z|>2|x-y|.$$
Plunge the Taylor expansion's main term and reminder term into $S_{p,3}(\nabla A)(x)$  and split $S_{p,3}(\nabla A)(x)$  as two terms $S_{p,3,m}(\nabla A)(x)$ (related to main term) and $S_{p,3,r}(\nabla A)(x)$ (related to reminder term), respectively. Then by $|x-y|\approx r$, we have the following estimate of $S_{p,3,m}(\nabla A)(x)$,
\Bes
\begin{split}
&S_{p,3,m}(\nabla A)(x)\\&\lc\Big[\fr{1}{w(Q(x,2r))}\int_{Q(x,2r)}\Big(\sum_{j=1}^d\sum_{i=1}^d\Big|\int_{|x-z|>2|x-y|}{\pari_i\mathcal{K}_j(x-z)\pari_jA(z)dz}\Big|\Big)^pw(y)dy\Big]^{\fr{1}{p}}\\
&\lc \sum_{j=1}^d\sum_{i=1}^d T_{i,j}^*(\pari_j A)(x),
\end{split}
\Ees
where the maximal singular integral operator $T_{i,j}^*(f)(x)$ is defined as follows
\Be\label{e:16ijcz}
T_{i,j}^*(f)(x)=\sup_{\eps>0}\Big|\int_{|x-y|>\eps}\pari_i\mathcal{K}_j(x-y)f(y)dy\Big|.
\Ee
One can easily check that the kernel $\pari_i\mathcal{K}_j(x-y)$ is a standard Calder\'on-Zygmund convolution kernel which satisfies \eqref{e:12kb}, \eqref{e:12kr} and has mean value zero on $\S^{d-1}$. Then by the standard weighted Calder\'on-Zygmund theory (see \cite{Gra249}), $T_{i,j}^*$ is bounded on $L^p(\R^d,w)$. So $T_{i,j}^*$ maps $L^p(\R^d,w)$ to $L^{p,\infty}(\R^d,w)$.

Finally one may apply the method similar to that of $I$ to handle the reminder term $S_{p,3,r}(\nabla A)(x)$. Indeed, by the H\"older inequality and $|x-y|\approx r$, we get
\Bes
\begin{split}
S_{p,3,r}(\nabla A)(x)
&\lc\Big[\fr{1}{w(Q(x,2r))}\int_{Q(x,2r)\setminus Q(x,r)}r\Big(\int_{r\lc|x-z|}\fr{|\nabla A(z)|^p}{|x-z|^{d+1}}dz\Big)w(y)dy\Big]^{\fr{1}{p}}\\
&\lc M_p(\nabla A)(x),
\end{split}
\Ees
where in the last inequality we use that the function $\eps|x|^{-d-1}\chi_{\{|x|>\eps\}}$ is radial non-increasing and $L^1$ integrable. As showed in the estimate of $I$, we get that $M_p$ maps $L^p(\R^d,w)$ to $L^{p,\infty}(\R^d,w)$. Hence we complete the proof.
\end{proof}
\begin{remark}
When giving an estimate in \eqref{e:16axy}, we in fact prove that the following inequality
\Be\label{e:16adiff}
\fr{|A(x)-A(y)|}{|x-y|}\lc M(\nabla A)(x)+M(\nabla A)(y)+\sum_{i=1}^d\sum_{j=1}^dT_{i,j}^*(\pari_j A)(x)
\Ee
holds for almost every $x,y\in\R^d$ if $A$ is a $C_c^\infty$ function, where $T_{i,j}^*$ is defined in \eqref{e:16ijcz}.
\end{remark}
\begin{lemma}\label{l:12disq1infty}
Let $\{Q_k\}_{k}$ be the disjoint cubes in $\R^d$. Denote by $l(Q_k)$ the side length of $Q_k$. Define the operator $T_s$ as
$$T_{s}(f)(x)=\sum_k\int_{Q_k}\fr{l(Q_k)^{s}}{[l(Q_k)+|x-y|]^{d+s}}|f(y)|dy.$$
Suppose that $1\leq q\leq\infty$, $w\in A_q(\R^d)$ and $f\in L^q(\R^d,w)$. Then for any $s>0$, we get that
\Bes
\|T_{s}(f)\|_{L^q(\R^d,w)}\lc\|f\|_{L^q(\R^d,w)}.
\Ees
\end{lemma}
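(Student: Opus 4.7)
The plan is to exploit that for each fixed $y\in Q_k$, the function $x\mapsto K_k(x,y):=l(Q_k)^s/(l(Q_k)+|x-y|)^{d+s}$ is a radial, decreasing, integrable bump in $x-y$ at scale $l(Q_k)$ whose $L^1$-norm is bounded by a constant depending only on $d$ and $s$. By the standard approximation-to-identity principle (radial decreasing $L^1$-majorants control the Hardy--Littlewood maximal operator) this yields the pointwise estimate
\begin{equation*}
\int_{\R^d} K_k(x,y)\,h(x)\,dx \lc Mh(y)\qquad \text{for every } y\in\R^d,\ h\geq 0,
\end{equation*}
with a constant independent of $k$. This uniform-in-$k$ majorant is the engine of the entire argument.

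For $1<q<\infty$ I would argue by duality. Writing $\|T_s f\|_{L^q(\R^d,w)}=\sup_{\|g\|_{L^{q'}(\R^d,w)}\leq 1}|\int T_s f\cdot gw\,dx|$ (for nonnegative $f,g$) and swapping the order of summation and integration,
\begin{equation*}
\int T_s f\cdot gw\,dx \;=\; \sum_k \int_{Q_k} f(y)\,I_k(y)\,dy,\qquad I_k(y):=\int_{\R^d} K_k(x,y)\,g(x)\,w(x)\,dx.
\end{equation*}
The key bound gives $I_k(y)\lc M(gw)(y)$ for each $y\in Q_k$, and the disjointness of the $Q_k$ then yields $\sum_k\int_{Q_k} fM(gw)\leq\int_{\R^d} fM(gw)\,dy$. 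H\"older applied against Lebesgue measure gives the upper bound $\|f\|_{L^q(\R^d,w)}\|M(gw)\|_{L^{q'}(\R^d,\sigma)}$ with $\sigma:=w^{1-q'}$. The classical equivalence $w\in A_q\Leftrightarrow\sigma\in A_{q'}$ together with Muckenhoupt's theorem gives $\|M(gw)\|_{L^{q'}(\R^d,\sigma)}\lc\|gw\|_{L^{q'}(\R^d,\sigma)}=\|g\|_{L^{q'}(\R^d,w)}\leq 1$, closing this range.

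For $q=1$ with $w\in A_1$, I would apply the same pointwise bound directly with $h=w$: $I_k(y)=\int K_k(x,y)w(x)\,dx\lc Mw(y)\lc w(y)$ by the $A_1$ condition. Integrating against $f$ and summing over the disjoint $Q_k$ gives $\|T_s f\|_{L^1(\R^d,w)}\lc\|f\|_{L^1(\R^d,w)}$. For $q=\infty$, since $\|f\|_{L^\infty(\R^d,w)}=\|f\|_{L^\infty}$, it suffices to bound $\sum_k\int_{Q_k}K_k(x,y)\,dy$ uniformly in $x$; I would do this by grouping the cubes into dyadic shells $\{Q_k:l(Q_k)+\dist(x,Q_k)\approx 2^j\}$ and summing a geometric series in $s>0$, using disjointness of the cubes to control the total contribution of each shell.

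The main technical point is the reduction-by-duality step: once one recognizes that the varying kernels $K_k$ share a uniform radial $L^1$-majorant, the disjointness of the $Q_k$ converts the sum of pointwise maximal-function bounds into a single integral against $M(gw)$, after which the conclusion follows from classical weighted Muckenhoupt theory and the duality identification $w\in A_q\Leftrightarrow w^{1-q'}\in A_{q'}$. The $q=\infty$ endpoint is the most delicate, since the uniform boundedness of $\sum_k\int_{Q_k}K_k(x,y)\,dy$ cannot be read off from a crude termwise estimate and requires the dyadic geometric summation indicated above.
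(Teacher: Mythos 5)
Your $q=1$ case is the paper's own proof: Fubini, then the uniform-in-$k$ radial $L^1$-majorant bound $\int K_k(x,y)\,w(x)\,dx\lesssim Mw(y)$, the $A_1$ inequality $Mw\lesssim w$, and disjointness of the $Q_k$. For $1<q<\infty$ your route genuinely differs from the paper's. The paper records the $q=1$ bound with constant depending only on $[w]_{A_1}$ and then cites the Rubio de Francia extrapolation theorem to reach all $1<q<\infty$ with $w\in A_q$; you instead argue by duality, reduce to $\|M(gw)\|_{L^{q'}(\R^d,w^{1-q'})}\lesssim\|gw\|_{L^{q'}(\R^d,w^{1-q'})}$, and invoke Muckenhoupt together with $w\in A_q\Leftrightarrow w^{1-q'}\in A_{q'}$. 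This is correct and has the virtue of being self-contained (no extrapolation black box), at the modest cost of juggling the dual weight $\sigma=w^{1-q'}$ explicitly. Either approach is fine for $1\leq q<\infty$.

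The $q=\infty$ endpoint is where your proposal breaks. You claim that $\sup_x\sum_k\int_{Q_k}K_k(x,y)\,dy\lesssim 1$ follows by grouping the cubes into dyadic shells $\{Q_k:\,l(Q_k)+\mathrm{dist}(x,Q_k)\approx 2^j\}$ and summing a geometric series in $s>0$. But disjointness alone gives no decay across shells: each shell can contribute an amount comparable to $1$. The simplest instance is $d=1$, $Q_j=[2^{-j-1},2^{-j}]$ for $j\geq 0$ and $x=0$: with the substitution $y=2^{-j-1}u$,
\[
\int_{Q_j}\frac{l(Q_j)^s}{\bigl[\,l(Q_j)+|y|\,\bigr]^{1+s}}\,dy=\int_1^2\frac{du}{(1+u)^{1+s}},
\]
a fixed positive constant independent of $j$, so $T_s(1)(x)\to\infty$ as $x\to 0^+$ and $\|T_s(1)\|_{L^\infty(\R,w)}=\infty$ while $\|1\|_{L^\infty(\R,w)}=1$. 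Thus the uniform pointwise bound you seek is simply false for general disjoint families, and no shell-by-shell geometric summation can rescue it. Some extra geometric input is needed at this endpoint (in the paper's applications the $Q_k$ come from a Whitney decomposition of a bounded set $G_\lambda$ and $T_s$ is evaluated only at $x\in(10G_\lambda)^c$; the paper's own one-line treatment of $q=\infty$ leans on such implicit structure). Your instinct that $q=\infty$ is the delicate case is well placed, but the mechanism you propose does not close it.
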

\begin{proof}
If $q=1$, Lemma \ref{l:12disq1infty} just follows from the property \eqref{e:16a1} of $A_1(\R^d)$ weight and the Fubini theorem. In fact, we have
\Bes
\begin{split}
\|T_s(f)\|_{L^1(\R^d,w)}&\leq\sum_{Q_k}\int_{Q_k}\Big[\int_{\R^d}\fr{w(x)\cdot l(Q_k)^{s}}{[l(Q_k)+|x-y|]^{d+s}}dx\Big]\cdot|f(y)|dy\\
&\lc\sum_{Q_k}\int_{Q_k}M(w)(y)\cdot|f(y)|dy\\
&\lc[w]_{A_1}\sum_{Q_k}\int_{Q_k}|f(y)|w(y)dy\lc[w]_{A_1}\|f\|_{L^1(\R^d,w)},
\end{split}
\Ees
where the second inequality follows from that splitting the kernel $\fr{l(Q_k)^{s}}{[l(Q_k)+|x-y|]^{d+s}}$ into two parts according whether $|x-y|\leq l(Q_k)$ or $|x-y|> l(Q_k)$, the third inequality follows from the property \eqref{e:16a1} and in the last inequality we use that $Q_k$s are cubes disjoint each other. After we establish $T_s$ is bounded on $L^1(\R^d,w)$ with bound $[w]_{A_1}$,  the proof of the case $1<q<\infty$ just follows from the famous extrapolation theorem (see \eg Theorem 7.5.3 in \cite{Gra249}).
If $q=\infty$, apply the Fubini theorem,
\Bes
|T_s(f)(x)|\leq\sum_{Q_k}\|f\|_{L^\infty(Q_k)}\sup_{x\in\R^d}\int_{Q_k}\fr{l(Q_k)^{s}}{[l(Q_k)+|x-y|]^{d+s}}dy\lc\|f\|_{L^\infty(\R^d)}.
\Ees
Then $T_s$ is bounded on $L^\infty(\R^d,w)$ is just a consequence of the chain of inequalities:
\Be\label{e:16minf}
\|T_s(f)\|_{L^\infty(\R^d,w)}\lc\|T_s(f)\|_{L^\infty(\R^d)}\lc\|f\|_{L^\infty(\R^d)}\lc\|f\|_{L^\infty(\R^d,w)},
\Ee
which can be proved as follows. Notice that we have the equivalent definition of $L^\infty(\R^d,\mu)$: $\|f\|_{L^\infty(\R^d,\mu)}=\sup\{\alp: \mu(\{x\in\R^d:|f(x)|>\alp\})>0\}.$
The first inequality in \eqref{e:16minf} follows from the fact that $w(E)>0$ implies $|E|>0$. Likewise, the last inequality in \eqref{e:16minf} follows from the fact that $|E|>0$ implies $w(E)>0$, because $w(x)=0$ only for the points in a set of measure zero by the definition of $A_\infty(\R^d)$ weight. Hence we complete the proof.
\end{proof}
\begin{remark}
By the last argument above, for any $w\in A_\infty(\R^d)$, the follow equality
$$\|f\|_{L^\infty(\R^d)}\approx \|f\|_{L^\infty(\R^d,w)}$$
holds. We will straightforward apply this equivalence many times later.
\end{remark}

\section {Proof of Theorem \ref{t:12}}\label{s:163}
\vskip0.24cm

\subsection{Some basic strong type multilinear estimates}\label{s:1222}\quad
\vskip0.24cm
In the following, we begin to give the proof of Theorem \ref{t:12}. In this subsection, we will first show our theorem in the case $q_1=\cdots=q_n=\infty$, $r=p\in [1,\infty)$ which is not quite complicated and the case $d<q_1,\cdots,q_n\leq\infty, 1<r<\infty, p=\infty$.
\begin{prop}\label{p:12strongr}
Let $q_i=\infty$ with $i=1,\cdots,n$, $1\leq r=p<\infty$.  We have the following conclusions:
\begin{enumerate}[(i).]
\item If $p=r\in (1,\infty)$, $w\in A_p(\R^d)$, then
\Bes
\|\C_*[\nabla A_1,\cdots,\nabla A_n, f]\|_{L^p(\R^d,w)}\lc\Big(\prod_{i=1}^n\|\nabla A_i\|_{L^{\infty}(\R^d,w)}\Big)\|f\|_{L^p(\R^d,w)}.
\Ees
\item If $p=r=1$, $w\in A_1(\R^d)$, then
\Bes
\|\C_*[\nabla A_1,\cdots,\nabla A_n, f]\|_{L^{1,\infty}(\R^d,w)}\lc\Big(\prod_{i=1}^n\|\nabla A_i\|_{L^{\infty}(\R^d,w)}\Big)\|f\|_{L^1(\R^d,w)}.
\Ees
\end{enumerate}
\end{prop}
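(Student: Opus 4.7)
The plan is to regard, with $A_1,\ldots,A_n$ fixed Lipschitz, the map $f\mapsto \mathcal{C}[\nabla A_1,\ldots,\nabla A_n,f]$ as a linear (non-convolution) Calder\'on--Zygmund operator on $f$ associated to the variable kernel
\[
\widetilde K(x,y):=K(x-y)\prod_{i=1}^n\frac{A_i(x)-A_i(y)}{|x-y|},
\]
and then derive (i) and (ii) from the classical weighted Calder\'on--Zygmund theory, including the theory of the maximal truncation operator. Throughout, by the Remark following Lemma~\ref{l:12disq1infty}, the hypothesis $w\in A_\infty$ lets me identify $\|\nabla A_i\|_{L^\infty(\R^d,w)}$ with the ordinary $\|\nabla A_i\|_{L^\infty(\R^d)}$; write $L:=\prod_{i=1}^n\|\nabla A_i\|_{L^\infty}$ for the resulting prefactor.

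The key step is verifying that $\widetilde K$ is a standard Calder\'on--Zygmund kernel with constant $\lc L$. The size bound $|\widetilde K(x,y)|\lc L|x-y|^{-d}$ is immediate from \eqref{e:12kb} and the mean-value inequality applied to each $A_i$. For H\"older regularity, when $|x-x'|\leq|x-y|/2$ I decompose
\[
\widetilde K(x,y)-\widetilde K(x',y)=\bigl(K(x-y)-K(x'-y)\bigr)\Pi(x,y)+K(x'-y)\bigl(\Pi(x,y)-\Pi(x',y)\bigr),
\]
with $\Pi(x,y)=\prod_i\frac{A_i(x)-A_i(y)}{|x-y|}$. The first summand is controlled by \eqref{e:12kr}; the second is handled by telescoping $\Pi$ and reducing to a single factor $\frac{A_i(x)-A_i(y)}{|x-y|}-\frac{A_i(x')-A_i(y)}{|x'-y|}$, which splits via $A_i(x)-A_i(y)=(A_i(x)-A_i(x'))+(A_i(x')-A_i(y))$ and the triangle inequality $||x-y|-|x'-y||\leq|x-x'|$ into terms all bounded by $\|\nabla A_i\|_{L^\infty}\cdot|x-x'|/|x-y|$. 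The combined estimate gives the standard H\"older regularity of $\widetilde K$ (in either variable) with constant $L$ and exponent $\delta\in(0,1]$.

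With the kernel estimates in hand, unweighted $L^2$ boundedness $\|\mathcal{C}[\nabla A_1,\ldots,\nabla A_n,f]\|_{L^2(\R^d)}\lc L\|f\|_{L^2(\R^d)}$---which is the special case $q_1=\cdots=q_n=\infty$, $p=r=2$ of the author's earlier paper \cite{Lai17}---feeds into the classical theory of non-convolution Calder\'on--Zygmund operators (see, e.g., \cite{Gra249}). This yields directly for the maximal truncation $\mathcal{C}_*$ the weighted strong $L^p(\R^d,w)$-bound for $w\in A_p$, $1<p<\infty$, and the weighted weak $(1,1)$ endpoint for $w\in A_1$, both carrying the required constant $L$. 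Concretely, the strong type follows from the Cotlar inequality $\mathcal{C}_*f(x)\lc M_\eta(\mathcal{C}f)(x)+L\cdot Mf(x)$ (with $0<\eta<1$ chosen so small that $w\in A_{p/\eta}$), combined with weighted boundedness of $M$ and $M_\eta$; the weak $(1,1)$ endpoint follows by running a weighted Calder\'on--Zygmund decomposition on $f$ at level $\lambda$ and invoking the kernel estimates together with the $A_1$-property of $w$ on the bad part.

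The main obstacle I anticipate is the H\"older-regularity estimate in the first step, where one must peel off the Lipschitz quotients cleanly and track their dependence on $L$ while remaining within the smoothness exponent $\delta$ afforded by \eqref{e:12kr}. Once this is settled, the remaining weighted estimates for $\mathcal{C}_*$ are direct invocations of well-established weighted CZ machinery for non-convolution operators and their maximal truncations.
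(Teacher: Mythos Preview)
Your proposal is correct and follows essentially the same approach as the paper: fix the Lipschitz functions $A_i$, verify that $\widetilde K(x,y)$ is a standard Calder\'on--Zygmund kernel with constant $L=\prod_i\|\nabla A_i\|_{L^\infty}$, invoke known $L^2$ boundedness, and then appeal to the standard weighted Calder\'on--Zygmund theory for the maximal truncation. The only cosmetic difference is that the paper obtains the $L^2$ bound via the $T1$ theorem or a reduction to the Christ--Journ\'e operator rather than citing \cite{Lai17}, and then simply cites \cite[Theorem~7.4.6]{Gra249} for the weighted conclusions where you spell out the Cotlar inequality and weighted Calder\'on--Zygmund decomposition.
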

\begin{proof}
The proof of this lemma is quite standard, so we just give some key steps. When $q_1=\cdots=q_n=\infty$, $A_i$ is a Lipschitz function for $i=1,\cdots,n$. Fix all $A_i$. Observe that the kernel
\Be\label{e:16kernelinf}
\mathfrak{K}(x,y):=K(x-y)\Big(\prod_{i=1}^n\fr{A_i(x)-A_i(y)}{|x-y|}\Big)
\Ee
is a standard Calder\'on-Zygmund kernel satisfying the boundedness condition and regularity condition with bound $\prod_{i=1}^n\|\nabla A_i\|_{L^\infty(\R^d)}$ (see \eg \cite[Definition 4.1.2]{Gra250}). Then we may have the following $L^2$ boundedness
\Be\label{e:16pvcom}
\|\C[\nabla A_1,\cdots,\nabla A_n, f]\|_{L^2(\R^d)}\lc\Big(\prod_{i=1}^n\|\nabla A_i\|_{L^{\infty}(\R^d)}\Big)\|f\|_{L^2(\R^d)}.
\Ee
This in fact can be seen by using the famous $T1$ theorem (see \cite{Gra250}) or by applying the mean value formula $$\fr{A_i(x)-A_i(y)}{|x-y|}=\int_{0}^1\Big\langle{\fr{x-y}{|x-y|}},{\nabla A_i(sx+(1-s)y)}\Big\rangle ds$$
to reduce the operator $\C$ to the following operator introduced by Christ and Journ\'e \cite{CJ87}
$$\C_{CJ}[a_1,\cdots,a_n,f](x)=\pv\int_{\R^d}k(x-y)(\prod_{i=1}^nm_{x,y}a_i)f(y)dy$$ which maps $L^{\infty}(\R^d)\times\cdots\times L^{\infty}(\R^d)\times L^2(\R^d)$ to $L^2(\R^d)$ (see \cite{CJ87}). Here in the above operator $k(x-y)$ is a standard Calder\'on-Zygmund kernel and $m_{x,y}a=\int_0^1a(sx+(1-s)y)dy$.
Then the rest of the proof just follows from  the standard weighted Cader\'on-Zygmund theory (see \cite[Theorem 7.4.6.]{Gra249}).
\end{proof}

\begin{prop}\label{e:16unwrgeq1}
Suppose that $1<r<\infty$, $d<q_1,\cdots,q_n\leq\infty$ and $\fr{1}{r}=\sum_{i=1}^n\fr{1}{q_i}$. Let $w\in\bigcap_{i=1}^n A_{\fr{q_i}{d}}(\R^d)$. Assume that $\nabla A_i\in L^{q_i}(\R^d,w), i=1,\cdots,n$ and $f\in L^\infty(\R^d,w)$. Then we get
$$\|\C[\nabla A_1,\cdots,\nabla A_n, f]\|_{L^r(\R^d,w)}\lc\Big(\prod_{i=1}^{n}\|\nabla A_i\|_{L^{q_i}(\R^d,w)}\Big)\|f\|_{L^\infty(\R^d,w)}.$$
\end{prop}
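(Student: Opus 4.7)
My strategy is to combine the Fefferman--Stein sharp maximal function inequality (Lemma~\ref{l:16fs}) with the Mary Weiss boundedness (Lemma~\ref{l:mw}) to reduce the estimate to a product form controlled by H\"older's inequality. The central step I would prove is the pointwise bound
\Bes
M_\delta^\sharp\bigl(\C[\nabla A_1,\cdots,\nabla A_n,f]\bigr)(x)\lc\Bigl(\prod_{i=1}^n M(\M(\nabla A_i))(x)\Bigr)\|f\|_{L^\infty(\R^d)}
\Ees
for some $\delta\in(0,1)$, where $M$ is the Hardy-Littlewood maximal operator. The composition $M\circ\M$ arises naturally from having to control the local Lipschitz behavior of each $A_i$ on a cube by a local average of $\M(\nabla A_i)$.

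Granted this sharp maximal estimate, the rest follows routinely. After a density reduction to $A_i,f\in C_c^\infty$ (to guarantee the a priori finiteness required to apply Lemma~\ref{l:16fs}), and noting $w\in\bigcap_iA_{q_i/d}\subset A_\infty(\R^d)$, Lemma~\ref{l:16fs} yields $\|\C[\cdots]\|_{L^r(\R^d,w)}\lc\|M_\delta^\sharp\C[\cdots]\|_{L^r(\R^d,w)}$. H\"older's inequality with $\fr{1}{r}=\sum_i\fr{1}{q_i}$ separates the product, the $L^{q_i}(\R^d,w)$-boundedness of $M$ (since $q_i>d$ forces $w\in A_{q_i/d}\subset A_{q_i}(\R^d)$) removes the outer $M$, and Lemma~\ref{l:mw} converts $\|\M(\nabla A_i)\|_{L^{q_i}(\R^d,w)}$ to $\|\nabla A_i\|_{L^{q_i}(\R^d,w)}$. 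Finally, the remark following Lemma~\ref{l:12disq1infty} identifies $\|f\|_{L^\infty(\R^d,w)}=\|f\|_{L^\infty(\R^d)}$, closing the chain.

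To establish the pointwise sharp maximal bound, I would fix a cube $Q\ni x$ and $z\in Q$, split $f=f\chi_{2Q}+f\chi_{(2Q)^c}$, and subtract the constant $c_Q=\C[\cdots,f\chi_{(2Q)^c}](x)$. The far piece is handled by kernel regularity: combining \eqref{e:12kr} for $K$ with the Mary Weiss bounds $|A_i(z)-A_i(x)|\leq|x-z|\M(\nabla A_i)(x)$ and $|A_i(x)-A_i(y)|/|x-y|\leq\M(\nabla A_i)(x)$ yields, for $y\in(2Q)^c$ and $z\in Q$,
\Bes
|\mathfrak{K}(z,y)-\mathfrak{K}(x,y)|\lc\Bigl(\fr{l(Q)^\delta}{|x-y|^{d+\delta}}+\fr{l(Q)}{|x-y|^{d+1}}\Bigr)\prod_{i=1}^n\M(\nabla A_i)(x),
\Ees
with $\mathfrak{K}$ from \eqref{e:16kernelinf}; integration against $\|f\|_{L^\infty}\chi_{(2Q)^c}(y)\,dy$ gives the required bound. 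For the near piece $f\chi_{2Q}$, I would invoke Kolmogorov's inequality together with the unweighted $L^2$ estimate \eqref{e:16pvcom} applied to a McShane-type Lipschitz extension $\tilde A_i$ of $A_i|_{3Q}$, whose Lipschitz constant is controlled by $\sup_{y\in 3Q}\M(\nabla A_i)(y)$; since $\tilde A_i$ agrees with $A_i$ on $3Q$, the operator values coincide for $z\in Q$ and $y\in 2Q$.

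The chief obstacle is controlling the supremum $\sup_{y\in 3Q}\M(\nabla A_i)(y)$ pointwise by $M(\M(\nabla A_i))(x)$, since the Hardy-Littlewood maximal operator only produces averages rather than suprema. To circumvent this, I would either perform an additional dyadic decomposition of the near-region integral exploiting the cancellation \eqref{e:12K_2}, or replace the $L^2$-Kolmogorov step by a weak-$L^{p_0}$ bound with $p_0$ close to $1$ and use a layer-cake argument to trade the supremum for an average; this technical step is where the hypotheses $q_i>d$ and $w\in A_{q_i/d}$ enter in an essential way.
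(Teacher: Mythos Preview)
Your overall architecture---density reduction, Fefferman--Stein via Lemma~\ref{l:16fs}, a pointwise sharp-maximal bound, then H\"older and the weighted boundedness of the maximal operators---matches the paper exactly. Your treatment of the \emph{far piece} is in fact cleaner than the paper's: you bound $|A_i(z)-A_i(y)|\le(|z-x|+|x-y|)\M(\nabla A_i)(x)\lc|x-y|\M(\nabla A_i)(x)$ directly, whereas the paper invokes the more elaborate formula \eqref{e:16adiff}, which introduces terms $T(\nabla A_i)(y)$ at the integration variable and forces an additional maximal function in the end. Both work, but yours is simpler.

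The genuine gap is the \emph{near piece}, and you have correctly identified it: bounding $\sup_{y\in3Q}\M(\nabla A_i)(y)$ by $M(\M(\nabla A_i))(x)$ is false in general, and neither of your suggested workarounds (a dyadic decomposition using \eqref{e:12K_2}, or a weak-$L^{p_0}$ layer-cake argument) is fleshed out enough to close the argument. The paper avoids this supremum entirely by replacing the Lipschitz-$L^2$ input \eqref{e:16pvcom} with the unweighted \emph{multilinear} estimate
\[
\C:\;L^d(\R^d)\times\cdots\times L^d(\R^d)\times L^\infty(\R^d)\longrightarrow L^{d/n}(\R^d)
\]
from \cite[Theorem~1.1]{Lai17}. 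With $\tilde A_i:=A_i\chi_{3Q}$ and $\del\le d/n$, one gets for $z\in Q$
\[
\Big(\fr{1}{|Q|}\int_Q|\C[\nabla A_1,\cdots,\nabla A_n,f\chi_{3Q}](z)|^\del dz\Big)^{1/\del}
\lc\|f\|_{L^\infty}\prod_{i=1}^n\Big(\fr{1}{|3Q|}\int_{3Q}|\nabla A_i|^d\Big)^{1/d}
\lc\|f\|_{L^\infty}\prod_{i=1}^n M_d(\nabla A_i)(x).
\]
This produces an \emph{average} $M_d(\nabla A_i)(x)$ rather than a supremum, and $M_d$ is bounded on $L^{q_i}(\R^d,w)$ precisely because $w\in A_{q_i/d}(\R^d)$---this is where that hypothesis enters. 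With this substitution your argument closes.
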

\begin{proof}
By the standard limiting arguments, it is enough to consider that each $A_i$ are $C_c^\infty$ functions and $f$ is bounded compact function. Then one can easily check that $\int_{\R^d}\big[M_\del\big(\C[\nabla A_1,\cdots,\nabla A_n, f]\big)(x)\big]^rw(x)dx$ is  finite (for example one may use the method in \cite[page 1248]{LOPTT09} to show this). Therefore using the Fefferman-Stein inequality in Lemma \ref{l:16fs}, we may get that for any $\del>0$,
\Bes
\begin{split}
\|\C[\nabla A_1,\cdots,\nabla A_n, f]\|_{L^r(\R^d,w)}&\leq
\|M_\del\big(\C[\nabla A_1,\cdots,\nabla A_n, f]\big)\|_{L^r(\R^d,w)}\\
&\lc\|M_\del^{\sharp}\big(\C[\nabla A_1,\cdots,\nabla A_n, f]\big)\|_{L^r(\R^d,w)}.
\end{split}
\Ees

In the following, we need to give an estimate of the maximal sharp function. Fix $x$ and a cube $Q\ni x$. Define $f_1=f\chi_{3Q}$ and $f_2=f-f_1$. Then write
\Bes
\C[\nabla A_1,\cdots, \nabla A_n,f](z)=\C[\nabla A_1,\cdots, \nabla A_n,f_1](z)+\C[\nabla A_1,\cdots, \nabla A_n,f_2](z).
\Ees

Choose a constant $c=\C[\nabla A_1,\cdots, \nabla A_n,f_2](x)$ in the maximal sharp function. Then we see that this maximal sharp function
$M_\del^{\sharp}(\C[\nabla A_1,\cdots, \nabla A_n, f])(x)$ is bounded by the following two functions
\Bes
\begin{split}I(x)+&II(x):=\sup_{Q\ni x}\Big(\fr{1}{|Q|}\int_{Q}|\C[\nabla A_1,\cdots, \nabla A_n,f_1](z)|^\del dz\Big)^{\fr{1}{\del}}\\
&+\sup_{Q\ni x}\Big(\fr{1}{|Q|}\int_{Q}|\C[\nabla A_1,\cdots, \nabla A_n,f_2](z)-\C[\nabla A_1,\cdots, \nabla A_n,f_2](x)|^\del dz\Big)^{\fr{1}{\del}}.
\end{split}
\Ees

We first consider the above first function $I(x)$. Define $\tilde{A}_i=A_i\chi_{3Q}$. Then for any $z\in Q$, we may write
\Bes
\C[\nabla A_1,\cdots, \nabla A_n,f_1](z)=\C[\nabla \tilde{A}_1,\cdots, \nabla \tilde{A}_n,f_1](z).
\Ees
Choose $\del\leq d/n$. Applying the H\"older inequality, strong type multilinear estimate (see \cite[Theorem 1.1]{Lai17}) and the definition of $\tilde{A}_i$, we may get
\Bes
\begin{split}
\Big(\fr{1}{|Q|}\int_{Q}|\C[\nabla A_1,\cdots, \nabla A_n,f_1](z)|^\del dz\Big)^{\fr{1}{\del}}&\lc\|\C[\nabla \tilde{A}_1,\cdots, \nabla \tilde{A}_n,f_1]\|_{L^{\fr{d}{n}}(Q,\fr{dx}{|Q|})}\\
&\lc\|f_1\|_{L^\infty(\R^d)}\prod_{i=1}^n\|\nabla \tilde{A}_i\|_{L^d(\R^d,\fr{dx}{|Q|})}\\
&\lc\|f\|_{L^\infty(\R^d)}\prod_{i=1}^d M_d(\nabla A_i)(x),
\end{split}
\Ees
where $M_d$ is the Hardy-Littlewood maximal operator of order $d$.
Notice that $w\in\bigcap_{i=1}^n A_{q_i/d}(\R^d)$, by using the weighted boundedness of the Hardy-Littlewood maximal operator, one may easily get that $\|M_d(\nabla A_i)\|_{L^{q_i}(\R^d,w)}\lc\|\nabla A_i\|_{L^{q_i}(\R^d,w)}$. Therefore
$$\|I\|_{L^r(\R^d,w)}\lc\Big(\prod_{i=1}^{n}\|\nabla A_i\|_{L^{q_i}(\R^d,w)}\Big)\|f\|_{L^\infty(\R^d,w)}.$$

Next we turn to $II(x)$. Write
\Bes
\C[\nabla A_1,\cdots, \nabla A_n,f_2](z)-\C[\nabla A_1,\cdots, \nabla A_n,f_2](x)=
\int_{(3Q)^c}\big[\mathfrak{K}(z,y)-\mathfrak{K}(x,y)\big]f(y)dy
\Ees
where $\mathfrak{K}(x,y):=K(x-y)\prod_{i=1}^n\fr{A_i(x)-A_i(y)}{|x-y|}$. Then write
\Bes
\begin{split}
\mathfrak{K}&(z,y)-\mathfrak{K}(x,y)\\
&=\Big(\fr{K(z-y)}{|z-y|^n}-\fr{K(x-y)}{|x-y|^n}\Big){\prod_{i=1}^n(A_i(z)-A_i(y))}\\
&\quad+\fr{K(x-y)}{|x-y|^n}\Big(\prod_{i=1}^n(A_i(z)-A_i(y))-\prod_{i=1}^n(A_i(x)-A_i(y))\Big)\\
&=:\mathfrak{K}_1(z,x,y)+\mathfrak{K}_2(z,x,y).
\end{split}
\Ees

We consider the term $\mathfrak{K}_1(z,x,y)$. Notice that $x, z\in Q$ and $y\in (3Q)^c$, then $|z-y|\approx |x-y|$. By the regularity condition \eqref{e:12kr} and the formula \eqref{e:16adiff}, we get that
\Bes
|\mathfrak{K}_1(z,x,y)|\lc\fr{(l(Q))^\del}{|x-y|^{d+\del}}\prod_{i=1}^d[M(\nabla A_i)(z)+T(\nabla A_i)(y)],
\Ees
where here and in the following, $T$ is the sum of combination of the Hardy-Littlewood maximal operator and maximal singular integral $T_{i,j}^*$ defined in \eqref{e:16ijcz}, which both map $L^q(\R^d,w)$ to itself for $1<q<\infty$.

Next we consider the term $\mathfrak{K}_2(z,x,y)$. We may split $\mathfrak{K}_{2}(z,x,y)$ into $n$ terms and apply \eqref{e:16adiff},
\Bes
\begin{split}
\mathfrak{K}_2(z,x,y)&\lc\fr{1}{|x-y|^{d+n}}\Big|\sum_{i=1}^n[A_i(z)-A_i(x)]\prod_{k=1}^{i-1}[A_k(x)-A_i(y)]\prod_{k=i+1}^{n}[A_k(z)-A_k(y)]\Big|\\
&\lc\fr{l(Q)}{|x-y|^{d+1}}\prod_{i=1}^n[M(\nabla A_i)(z)+T(\nabla A_i)(x)+T(\nabla A_i)(y)].
\end{split}
\Ees
Combining these estimates of $\mathfrak{K}_1$ and $\mathfrak{K}_2$, we get
\Bes
\begin{split}
|\mathfrak{K}&(z,y)-\mathfrak{K}(x,y)|\\
&\lc\fr{(l(Q))^\del}{|x-y|^{d+\del}}\prod_{i=1}^n[M(\nabla A_i)(z)+T(\nabla A_i)(x)+T(\nabla A_i)(y)]\\
&\lc\fr{(l(Q))^\del}{|x-y|^{d+\del}}\sum_{\N_1^n}\prod_{i\in N_1}M(\nabla A_i)(z)\prod_{i\in N_2}T(\nabla A_i)(x)\prod_{i\in N_3}T(\nabla A_i)(y),
\end{split}
\Ees
where in the last inequality we divide $\N_{1}^n=N_1\cup N_2\cup N_3$ with $\N_{1}^n=\{1,\cdots,n\}$ and $N_1$, $N_2$, $N_3$ non intersecting each other. Plugging the above estimates into $II(x)$ and applying the H\"older inequality, we get that
\Bes
\begin{split}
II(x)&\lc\sum_{\N_1^n}\big[\prod_{i\in N_2}T(\nabla A_i)(x)\big]\fr{1}{|Q|}\int_{Q}\int_{(3Q)^c}\fr{(l(Q))^\del}{|x-y|^{d+\del}}\big[\prod_{i\in N_3}T(\nabla A_i)(y)\big]f(y)dy\\
&\quad\quad\times\big[\prod_{i\in N_1}T(\nabla A_i)(z)\big]dz\\
&\lc\|f\|_{L^\infty(\R^d)}\sum_{\N_1^n}\big[\prod_{i\in N_2}T(\nabla A_i)(x)\big]M\big[\prod_{i\in N_3}T(\nabla A_i)\big](x)\cdot M\big[\prod_{i\in N_1}T(\nabla A_i)\big](x).
\end{split}
\Ees
Now using the H\"older inequality and the fact that $M$, $T$ are bounded on $L^q(\R^d,w)$ for $1<q<\infty$, we get
$$\|II\|_{L^r(\R^d,w)}\lc \Big(\prod_{i=1}^{n}\|\nabla A_i\|_{L^{q_i}(\R^d,w)}\Big)\|f\|_{L^\infty(\R^d,w)},$$
which completes the proof.
\end{proof}

\begin{prop}\label{p:16maxinf}
Suppose that $1<r<\infty$, $d<q_1,\cdots,q_n\leq\infty$, $p=\infty$ and $\fr{1}{r}=\sum_{i=1}^n\fr{1}{q_i}$. Let $w\in\bigcap_{i=1}^n A_{q_i/d}(\R^d)$. Then we get
$$\|\C_*[\nabla A_1,\cdots,\nabla A_n, f]\|_{L^r(\R^d,w)}\lc\Big(\prod_{i=1}^{n}\|\nabla A_i\|_{L^{q_i}(\R^d,w)}\Big)\|f\|_{L^\infty(\R^d)}.$$
\end{prop}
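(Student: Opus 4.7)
The plan is to reduce the maximal operator $\C_*$ to the non-maximal $\C$ (already handled by Proposition~\ref{e:16unwrgeq1}) by establishing a multilinear Cotlar-type pointwise inequality of the form
\begin{equation*}
\C_*[\nabla A_1,\ldots,\nabla A_n,f](x) \lc M_\del\big(\C[\nabla A_1,\ldots,\nabla A_n,f]\big)(x) + G(x),
\end{equation*}
valid for some sufficiently small $\del>0$, where the ``good function'' $G$ is a finite sum of products of $\|f\|_{L^\infty}$ with pointwise maximal-type operators acting on $\nabla A_i$: the Mary Weiss maximal function $\M(\nabla A_i)$, the Hardy--Littlewood maximals $M(\nabla A_i)$ and $M_d(\nabla A_i)$, and the maximal operators $T_{i,j}^*(\pari_j A_i)$ from \eqref{e:16ijcz}. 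Once this pointwise bound is in hand, the $L^r(\R^d,w)$ estimate follows from two ingredients: (a) boundedness of $M_\del$ on $L^r(\R^d,w)$ for $\del$ small enough (available since $w\in A_\infty$), combined with Proposition~\ref{e:16unwrgeq1}, handles the first term; (b) H\"older's inequality on $G(x)$, together with the $L^{q_i}(\R^d,w)$-boundedness of $\M$ furnished by Lemma~\ref{l:mw} (which requires $w\in A_{q_i/d}$ and $q_i>d$), of $M$ and $M_d$, and of the maximal singular integrals $T_{i,j}^*$, produces the product $\prod_i\|\nabla A_i\|_{L^{q_i}(\R^d,w)}$; and $\|f\|_{L^\infty}\approx\|f\|_{L^\infty(\R^d,w)}$ by the remark after Lemma~\ref{l:12disq1infty}.

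To construct the Cotlar bound, fix $x\in\R^d$ and $\eps>0$, set $B_0=B(x,\eps/2)$, and split $f=f_1+f_2$ with $f_1=f\chi_{B(x,2\eps)}$. For any $z\in B_0$ and $y$ with $|x-y|\geq 2\eps$ one has $|z-y|\geq 3\eps/2$, so the principal value at $z$ is harmless on $\supp f_2$ and
\begin{equation*}
\C_\eps[\nabla A_1,\ldots,\nabla A_n,f](x) = A(x)+B(z)+\C[\nabla A_1,\ldots,\nabla A_n,f](z)-\C[\nabla A_1,\ldots,\nabla A_n,f_1](z),
\end{equation*}
where $A(x):=\int_{\eps\leq|x-y|<2\eps}\mathfrak{K}(x,y)f(y)\,dy$ with $\mathfrak{K}$ as in \eqref{e:16kernelinf}, and $B(z):=\int_{|x-y|\geq 2\eps}[\mathfrak{K}(x,y)-\mathfrak{K}(z,y)]f(y)\,dy$. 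Raising to the $\del$-th power, averaging over $z\in B_0$, and passing to the supremum in $\eps$, the middle piece produces $M_\del(\C[\ldots,f])(x)$. The annular error satisfies $|A(x)|\lc Mf(x)\prod_i\M(\nabla A_i)(x)$ from $|K(x-y)|\lc\eps^{-d}$ and the very definition of the Mary Weiss maximal operator. The kernel perturbation $B(z)$ is handled exactly as the term $II(x)$ in the proof of Proposition~\ref{e:16unwrgeq1}: split $\mathfrak{K}(x,y)-\mathfrak{K}(z,y)$ into a $K$-difference part and an $A_i$-difference part, apply the regularity \eqref{e:12kr} and the pointwise identity \eqref{e:16adiff}, then dominate the $y$-integral by a Hardy--Littlewood maximal function using that $\eps^\del(|x-y|+\eps)^{-d-\del}$ is a radially decreasing $L^1$ kernel. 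Finally, the local piece $\C[\ldots,f_1](z)$ is treated as the $I(x)$ term in Proposition~\ref{e:16unwrgeq1}: choose $\del\leq d/n$, localize $\tilde A_i=A_i\chi_{B(x,2\eps)}$ (noting that $\C[\nabla A_1,\ldots,f_1](z)=\C[\nabla\tilde A_1,\ldots,f_1](z)$ for $z\in B_0$), and invoke the multilinear strong-type estimate of \cite{Lai17} on $B_0$, yielding $\lc\|f\|_{L^\infty}\prod_iM_d(\nabla A_i)(x)$ after the $B_0$-average. All three right-hand sides are $\eps$-independent, so taking $\sup_\eps$ delivers the desired Cotlar inequality.

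The principal obstacle is the annular contribution $A(x)$: the cancellation \eqref{e:12K_2} is unavailable on a thin shell, and the bound $|A_i(x)-A_i(y)|/|x-y|\leq\M(\nabla A_i)(x)$ is essentially the only pointwise device at one's disposal. Consequently the weighted $L^{q_i}$-boundedness of $\M$ provided by Lemma~\ref{l:mw} is the critical ingredient, and it is this step that forces the assumption $w\in A_{q_i/d}$ with $q_i>d$ and $p=\infty$. This structural constraint is precisely why the Cotlar approach covers only the regime of the present proposition, the remaining cases of Theorem~\ref{t:12} demanding the exceptional-set strategy sketched in the introduction.
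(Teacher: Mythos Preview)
Your proposal is correct and follows a closely related but technically distinct route from the paper. Both arguments establish a Cotlar-type pointwise inequality reducing $\C_*$ to $\C$ (already controlled by Proposition~\ref{e:16unwrgeq1}) plus error terms built from maximal-type operators acting on the $\nabla A_i$. The paper, however, compares $\C_\eps[\ldots,f](x)$ to the smooth mollification $\vphi_\eps*\C[\ldots,f](x)$, which yields $M\big(\C[\ldots,f]\big)(x)$ and a decomposition of the difference into pieces $P_\eps+Q_\eps$; the far piece $P_\eps$ is treated via the kernel-difference estimate and \eqref{e:16adiff} exactly as your $B(z)$, while the near piece $Q_\eps$ is handled by Fubini together with an application of the unweighted multilinear bound with $\vphi_\eps(x-\cdot)$ playing the role of one of the inputs. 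Your version instead uses the classical ball-average Cotlar inequality with $M_\del$, and you separate off an explicit annular term $A(x)$ which you bound directly by $\|f\|_{L^\infty}\prod_i\M(\nabla A_i)(x)$ using the Mary Weiss maximal function---this is arguably cleaner than the paper's $Q_\eps$ argument and makes the role of Lemma~\ref{l:mw} (and hence of the hypothesis $w\in A_{q_i/d}$, $q_i>d$) completely transparent. Your use of $M_\del$ with $\del$ small, rather than $M$, is also slightly more robust: boundedness of $M_\del$ on $L^r(\R^d,w)$ only requires $w\in A_\infty$ once $\del$ is chosen small enough, whereas boundedness of $M$ on $L^r(\R^d,w)$ requires $w\in A_r$, which is not an explicit hypothesis here.
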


\begin{proof}
Let $\vphi$ be a $C_c^\infty$ function which is supported in $\{x\in\R^d: |x|<1/4\}$, $\vphi(x)=1$ if $|x|<1/8$ and $\int_{\R^d}\vphi(x)dx=1$. Set $\vphi_\eps(x)=\eps^{-d}\vphi(\eps^{-1}x)$.
It is easy to see that $\vphi_\eps*\C[\nabla A_1,\cdots,\nabla A_n, f](x)$ is bounded by $M(\C[\nabla A_1,\cdots,\nabla A_n, f])(x)$. By the weighted boundedness of the Hardy-Littlewood maximal operator $M$ and Proposition \ref{e:16unwrgeq1}, we may get that
\Bes
\|M(\C[\nabla A_1,\cdots,\nabla A_n, f])\|_{L^r(\R^d,w)}\lc\Big(\prod_{i=1}^{n}\|\nabla A_i\|_{L^{q_i}(\R^d,w)}\Big)\|f\|_{L^\infty(\R^d,w)}.
\Ees
So, to complete the proof, it is suffice to show that the following difference
$$\C_\eps[\nabla A_1,\cdots,\nabla A_n, f](x)-\vphi_\eps*\C[\nabla A_1,\cdots,\nabla A_n, f](x)$$
is controlled uniformly in $\eps$ by a function which is bounded from $L^{q_1}(\R^d,w)\times\cdots\times L^{q_n}(\R^d,w)\times L^\infty(\R^d,w)$ to $L^r(\R^d,w)$. We write the difference in the above equality as follows
\Bes
\begin{split}
\int_{\R^d}\vphi_\eps(z)&\Big[\int_{|x-y|>\eps}\Big(\mathfrak{K}(x,y)-\mathfrak{K}(x-z,y)\Big)f(y)dy\Big]dz\\
&+\int_{\R^d}\vphi_\eps(z)\Big[\pv\int_{|x-y|<\eps}\mathfrak{K}(x-z,y)f(y)dy\Big]dz=:P_\eps(x)+Q_\eps(x),
\end{split}
\Ees
where $\mathfrak{K}(x,y):=K(x-y)\prod_{i=1}^n\fr{A_i(x)-A_i(y)}{|x-y|}$. Now we first give an estimate of $Q_\eps(x)$. By the Fubini theorem,

\Bes
\begin{split}
Q_\eps(x)&=\int_{\R^d}\vphi_\eps(x-z)\Big[\pv\int_{|x-y|<\eps}\mathfrak{K}(z,y)f(y)dy\Big]dz\\
&=\int_{|x-y|<\eps}\Big[\pv\int_{\R^d}\vphi_\eps(x-z)\mathfrak{K}(z,y)dz\Big]f(y)dy.
\end{split}
\Ees

Notice that $|x-y|<\eps$ and $|x-z|<\fr{\eps}{4}$. For each $i=1,\cdots,n$, define $\tilde{A}_i(\cdot)=A_i(\cdot)\chi_{\{|\cdot-x|<\eps\}}$. The all $A_i$ in the above inequality can be replaced by $\tilde{A}_i$. Choose $\fr{1}{\tilde{r}}=(\sum_{i=1}^n\fr{1}{\tilde{q}_{i}})+\fr{1}{\tilde{p}}$ such that $1<\tilde{r}<+\infty$, $1\leq\tilde{q}_i<q_i<\infty$ for all $i=1,\cdots,n$, $1<\tilde{p}<\infty$.
Then by the multilinear boundedness properties of $\C[\nabla A_1,\cdots,\nabla A_n,f](x)$, we may continue to give an estimate of $Q_\eps(x)$ as follows
\Bes
\begin{split}
|Q_\eps(x)|&\lc\|f\|_{L^\infty(\R^d)}\eps^{\fr{d}{\tilde{r}'}}\prod_{i=1}^n\|\nabla \tilde{A}_i\|_{L^{\tilde{q}_i}(\R^d)}\|\vphi_\eps\|_{L^{\tilde{p}}(\R^d)}\\
&\leq\|f\|_{L^\infty(\R^d)}\Big(\prod_{i=1}^n\fr{1}{\eps^d}\int_{|x-z|<\eps}|\nabla A_i(z)|^{\tilde{q}_i}dz\Big)^{\fr{1}{\tilde{q}_i}}\eps^{\fr{d}{\tilde{r}'}+\sum_{i=1}^n\fr{d}{\tilde{q}_i}-d\fr{\tilde{p}-1}{\tilde{p}}}\|\vphi\|_{L^{\tilde{p}}(\R^d)}\\
&\lc\|f\|_{L^\infty(\R^d,w)}\prod_{i=1}^n M_{\tilde{q}_i}(\nabla A_i)(x).
\end{split}
\Ees
As we have done in the proof of Lemma \ref{l:mw}, we see that $M_{\tilde{q_i}}$ maps $L^{q_i}(\R^d,w)$ to itself for $w\in A_{q_i/d}(\R^d)$. Then by using the H\"older inequality, we may get that
\Bes
\|\sup_\eps|Q_\eps|\|_{L^r(\R^d,w)}\lc\Big(\prod_{i=1}^{n}\|\nabla A_i\|_{L^{q_i}(\R^d,w)}\Big)\|f\|_{L^\infty(\R^d,w)}.
\Ees

Next we turn to $P_\eps(x)$. Write
\Bes
\begin{split}
\mathfrak{K}(x,&y)-\mathfrak{K}(x-z,y)\\
&=\Big(\fr{K(x-y)}{|x-y|^n}-\fr{K(x-z-y)}{|x-z-y|^n}\Big){\prod_{i=1}^n(A_i(x)-A_i(y))}\\
&\quad+\fr{K(x-z-y)}{|x-z-y|^n}\Big[\prod_{i=1}^n(A_i(x)-A_i(y))-\prod_{i=1}^n(A_i(x-z)-A_i(y))\Big]\\
&=:I+II.
\end{split}
\Ees

We consider the term $I$. Notice that $|x-y|>\eps$ and $|z|<\fr{1}{4}\eps$, then $|x-y|\approx |x-z-y|$. By the regularity condition \eqref{e:12kr} and \eqref{e:16adiff}, we get that
\Bes
|I|\lc\fr{\eps^\del}{|x-y|^{d+\del}}\prod_{i=1}^d[M(\nabla A_i)(x)+T(\nabla A_i)(y)].
\Ees

Consider the term $II$. We may split $II$ into $n$ terms and use \eqref{e:16adiff},
\Bes
\begin{split}
|II|&\lc\fr{1}{|x-y|^{d+n}}\Big|\sum_{i=1}^n[A_i(x)-A_i(x-z)]\prod_{k=1}^{i-1}[A_k(x-z)-A_i(y)]\prod_{k=i+1}^{n}[A_k(x)-A_k(y)]\Big|\\
&\lc\fr{\eps}{|x-y|^{d+1}}\prod_{i=1}^n[M(\nabla A_i)(x)+T(\nabla A_i)(x-z)+T(\nabla A_i)(y)].
\end{split}
\Ees
Combining the estimates of $I$ and $II$, we get
\Bes
\begin{split}
|\mathfrak{K}(x,&y)-\mathfrak{K}(x-z,y)|\\
&\lc\fr{\eps^\del}{|x-y|^{d+\del}}\prod_{i=1}^n[M(\nabla A_i)(x)+T(\nabla A_i)(x-z)+T(\nabla A_i)(y)]\\
&\lc\fr{\eps^\del}{|x-y|^{d+\del}}\sum_{\N_1^n}\prod_{i\in N_1}M(\nabla A_i)(x)\prod_{i\in N_2}T(\nabla A_i)(x-z)\prod_{i\in N_3}T(\nabla A_i)(y),
\end{split}
\Ees
where in the last inequality we divide $\N_{1}^n=N_1\cup N_2\cup N_3$ with $N_1$, $N_2$, $N_3$ non intersecting each other. Plugging the above estimate into $P_\eps(x)$, we get that
\Bes
\begin{split}
|P_\eps(x)|&\lc\sum_{\N_1^n}\big[\prod_{i\in N_1}M(\nabla A_i)(x)\big]\int_{\R^d}\int_{|x-y|>\eps}\fr{\eps^\del}{|x-y|^{d+\del}}\big[\prod_{i\in N_3}T(\nabla A_i)(y)\big]f(y)dy\\
&\quad\quad\times\vphi_\eps(z)\big[\prod_{i\in N_2}T(\nabla A_i)(x-z)\big]dz\\
&\lc\|f\|_{L^\infty(\R^d)}\sum_{\N_1^n}\big[\prod_{i\in N_1}M(\nabla A_i)(x)\big]M\big[\prod_{i\in N_2}T(\nabla A_i)\big](x)\cdot M\big[\prod_{i\in N_3}T(\nabla A_i)\big](x).
\end{split}
\Ees
Now using the H\"older inequality and the fact that $M$, $T$ are bounded on $L^q(\R^d,w)$ for $1<q<\infty$, we get that
\Bes
\|\sup_{\eps}|P_\eps|\|_{L^r(\R^d,w)}\lc\Big(\prod_{i=1}^{n}\|\nabla A_i\|_{L^{q_i}(\R^d,w)}\Big)\|f\|_{L^\infty(\R^d,w)},
\Ees
which completes the proof.
\end{proof}

\subsection{Case: all $q_i$s are larger than $d$}\label{s:1223}\quad
\vskip0.24cm
In this subsection, we consider the case $d/(d+n)\leq r<\infty$ and $d\leq q_1,\cdots,q_n\leq\infty$. Without loss of generality, we assume that the first $q_1,\cdots,q_l>d$ and $q_{l+1},\cdots,q_n=d$ with $0\leq l\leq n$. Here and in the following, when $l=0$, we mean all $q_1=\cdots=q_n=d$. The proof of the case $p=\infty$ is slight different from that of $1\leq p<\infty$. So we shall give two propositions below. Let us see the case $1\leq p<\infty$ firstly and we emphasize in the proof where it doesn't work for $p=\infty$.
\begin{prop}\label{p:12qibigd}
Let $\fr{1}{r}=\big(\sum_{i=1}^n\fr{1}{q_i}\big)+\fr{1}{p}$, $\fr{d}{d+n}\leq r<\infty$, $d<q_1,\cdots,q_l\leq\infty$ and $q_{l+1} =\cdots=q_n=d$ with $0\leq l\leq n$, $1\leq p<\infty$. Suppose that $w\in \big(\bigcap_{i=1}^nA_{\max\{\fr{q_i}{d},1\}}(\R^d)\big)\cap {A_p}(\R^d)$. Then
\Be\label{e:12weakqgeqd}
\begin{split}
\|\C_*[\nabla A_1,&\cdots,\nabla A_n, f]\|_{L^{r,\infty}(\R^d,w)}\\
&\lc\Big(\prod_{i=1}^l\|\nabla A_i\|_{L^{q_i}(\R^d,w)}\Big)\Big(\prod_{i=l+1}^{n}\|\nabla A_i\|_{L^{d,1}(\R^d,w)}\Big)\|f\|_{L^p(\R^d,w)},
\end{split}
\Ee
where $L^{d,1}(\R^d,w)$ is the weighted Lorentz space.
\end{prop}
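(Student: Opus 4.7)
The plan is to prove the distributional bound $w(\{x : \C_*[\nabla A_1,\dots,\nabla A_n,f](x) > \lam\}) \lesssim \lam^{-r}$ uniformly in $\lam>0$. After the usual density reduction to $A_i, f \in C_c^\infty(\R^d)$ and a homogeneity normalization $\|\nabla A_i\|_{L^{q_i}(\R^d,w)} = 1$ for $i \le l$, $\|\nabla A_i\|_{L^{d,1}(\R^d,w)} = 1$ for $l < i \le n$, and $\|f\|_{L^p(\R^d,w)} = 1$, I would introduce the exceptional set
\Bes
E_\lam := \bigcup_{i\,:\,q_i < \infty} \{x \in \R^d : \M(\nabla A_i)(x) > c_0 \lam^{r/q_i}\},
\Ees
which is open because $\M(\nabla A_i)$ is lower semicontinuous. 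Using the identity $\sum_i (r/q_i) + r/p = 1$ together with Lemma \ref{l:mw} for indices with $d < q_i < \infty$ (where the hypothesis $w \in A_{q_i/d}(\R^d)$ enters) and Lemma \ref{l:11md} for indices with $q_i = d$ (where $w \in A_1(\R^d)$ enters), one obtains $w(E_\lam) \lesssim \lam^{-r}$. Indices with $q_i = \infty$ contribute nothing to $E_\lam$, since there $A_i$ is already globally Lipschitz with constant $\lesssim 1$ by $\|\cdot\|_{L^\infty(\R^d)} \approx \|\cdot\|_{L^\infty(\R^d,w)}$ (the remark after Lemma \ref{l:12disq1infty}).

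On $G_\lam := \R^d \setminus E_\lam$ the pointwise Mary Weiss bound gives $|A_i(x) - A_i(y)| \le c_0 \lam^{r/q_i}|x-y|$ for every $x \in G_\lam$ and every $y$, so $A_i|_{G_\lam}$ is $c_0 \lam^{r/q_i}$-Lipschitz, and the McShane extension formula produces $\tilde A_i$ globally $\lesssim \lam^{r/q_i}$-Lipschitz on $\R^d$ with $\tilde A_i \equiv A_i$ on $G_\lam$ (I set $\tilde A_i = A_i$ when $q_i = \infty$). Writing $B_i := A_i - \tilde A_i$, which is supported in $E_\lam$, and expanding
\Bes
\prod_{i=1}^n (A_i(x) - A_i(y)) = \prod_{i=1}^n \bigl( (\tilde A_i(x) - \tilde A_i(y)) - B_i(y) \bigr)
\Ees
for $x \in G_\lam$ splits $\C_\eps[\nabla A_1,\dots,\nabla A_n, f](x)$ into the principal piece $\C_\eps[\nabla \tilde A_1,\dots,\nabla \tilde A_n, f](x)$ and $2^n - 1$ error terms indexed by nonempty $S \subseteq \{1,\dots,n\}$.

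For the principal piece, the kernel is a standard Calder\'on--Zygmund kernel with norm $\prod_i C\lam^{r/q_i} = C\lam^{1-r/p}$, and Proposition \ref{p:12strongr} would deliver $w(\{\C_*[\nabla \tilde A_1,\dots,\nabla \tilde A_n, f] > \lam/2\}) \lesssim \lam^{-r}$ (strong $L^p(w)$-bound when $p > 1$, weak $(1,1)$ when $p = 1$). For the $S$-th error term with $|S| = m \ge 1$, I would perform a Whitney decomposition $E_\lam = \bigsqcup_k Q_k$; for $y \in Q_k$, comparing $A_i(y)$ and $\tilde A_i(y)$ through a nearest good point $y_k^\ast \in G_\lam$ (with $|y - y_k^\ast| \lesssim l(Q_k)$) via the Mary Weiss bound at $y_k^\ast$ and the Lipschitz bound on $\tilde A_i$ yields the refined estimate $|B_i(y)| \lesssim \lam^{r/q_i}\, l(Q_k)$. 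Combined with $|x-y| \gtrsim l(Q_k)$ for $x \in G_\lam$ and $y \in Q_k$ (Whitney property), each error term is dominated pointwise by
\Bes
|\mathrm{Error}_S(x)| \lesssim \lam^{1-r/p} \sum_k \int_{Q_k} \frac{l(Q_k)^m}{|x-y|^{d+m}} |f(y)|\,dy \lesssim \lam^{1-r/p}\, T_m(f)(x),
\Ees
and Lemma \ref{l:12disq1infty} then gives $w(\{|\mathrm{Error}_S| > c\lam\}) \lesssim \lam^{-r}$ via the $L^p(w)$- or $L^1(w)$-boundedness of $T_m$. Summing the contributions from $E_\lam$, the principal piece, and the finitely many error pieces completes the proof. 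The main obstacle is the refined Whitney-scale bound $|B_i(y)| \lesssim \lam^{r/q_i}\, l(Q_k)$: without that extra $l(Q_k)$ factor the naive estimate of an error term would involve the divergent integral $\int_{E_\lam} |f(y)|/|x-y|^d\,dy$ (the operator $T_0$ is not bounded), so exploiting the Whitney-cube geometry to land in the $s \ge 1$ regime of Lemma \ref{l:12disq1infty} is essential.
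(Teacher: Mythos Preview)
Your proposal is correct and follows essentially the same approach as the paper: define the exceptional set via the Mary Weiss maximal function $\M$, invoke Lemmas \ref{l:mw} and \ref{l:11md} for its $w$-measure, Lipschitz-extend each $A_i$ off the good set, and control the resulting error pieces through a Whitney decomposition and Lemma \ref{l:12disq1infty}. Your bookkeeping is in fact slightly more economical than the paper's: by writing $A_i(x)-A_i(y)=(\tilde A_i(x)-\tilde A_i(y))-B_i(y)$ for $x$ good you get a $2^n$-term expansion that simultaneously handles all of $f$, whereas the paper first splits $f=f\chi_{(G_\lam)^c}+f\chi_{G_\lam}$ and then, for the bad part, further decomposes each factor through the Whitney reference point $y_k$ into three pieces (yielding $3^n$ terms); the extra split in the paper is exactly what you absorb into your single estimate $|B_i(y)|\lc \lam^{r/q_i}l(Q_k)$ via the same comparison at $y_k^\ast$.
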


\begin{proof}
By the dense limiting argument and scaling argument, it is sufficient to prove that when $A_i$ ($i=1,\cdots,n$) and $f$ are $C^\infty$ functions with compact supports, $$\|\nabla A_i\|_{L^{q_i}(\R^d,w)}=\|\nabla A_j\|_{L^{d,1}(\R^d,w)}=\|f\|_{L^p(\R^d,w)}=1,$$
for $i=1,\cdots,l$ and $j=l+1,\cdots,n$, the following inequality
\Bes
w(\{x\in\R^d:\C_*[\nabla A_1,\cdots,\nabla A_n, f](x)>\lam\})\lc\lam^{-r}
\Ees
holds for any $\lam>0$.
Fix $\lam>0$. For convenience we set
\Be\label{e:12elam}
E_\lam=\{x\in\R^d:\mathcal{C}_*[\nabla A_1,\cdots,\nabla A_n, f](x)>\lam\}.
\Ee

Our goal is to show $w(E_\lam)\lc\lam^{-r}.$
First assume that all $q_1,\cdots, q_l<\infty$. Once the proof in this situation is well understood, we can modify the proof to the other case that there exist some $q_i=\infty$ for $i=1,\cdots,l$. We shall show how to do this in the last part of the proof.
Define the {\it exceptional set}
\Bes
\begin{split}
J_{i,\lam}=\big\{x\in\R^d:\M (\nabla A_i)(x)>\lam^{\fr{r}{q_i}}\big\}.
\end{split}
\Ees
for $i=1,\cdots,n$. Here it should be pointed out that the above definition is meaningless if $q_i=\infty$. Therefore we need to assume all $q_i<\infty$ firstly.
By Lemma \ref{l:mw} and Lemma \ref{l:11md}, $\M$ maps $L^p(\R^d,w)$ to itself for $p>d$ and maps $L^{d,1}(\R^d,w)$ to $L^{d,\infty}(\R^d,w)$, i.e.
\Be\label{e:11jlam}
\begin{split}
w(J_{i,\lam})\lc\lam^{-r}\|\nabla A_i\|^{q_i}_{L^{q_i}(\R^d,w)}&=\lam^{-r}, \ \ i=1,\cdots,l;\\
w(J_{j,\lam})\lc\lam^{-r}\|\nabla A_j\|^{d}_{L^{d,1}(\R^d,w)}&=\lam^{-r}, \ \ j=l+1,\cdots,n.
\end{split}
\Ee

Set $J_\lam=\cup_{i=1}^n J_{i,\lam}$. Since $w(x)dx$ satisfies the doubling property, we may choose an open set $G_\lam$ which satisfies the following conditions:
(1) $J_\lam\subset G_\lam$;
(2) $w(G_\lam)\lc w(J_\lam)$.
By the property \eqref{e:11jlam} of $J_{i,\lam}$, we see that $w(G_\lam)\lc\lam^{-r}$. Next making a Whitney decomposition of $G_\lam$ (see \eg \cite{Gra249}), we may obtain a family of disjoint dyadic cubes $\{Q_k\}_k$ such that
\begin{enumerate}[(i).]
\item \quad $G_\lam=\bigcup_{k=1}^\infty Q_k$;
\item \quad $\sqrt{d}\cdot l(Q_k)\leq dist(Q_k,(G_\lam)^c)\leq4\sqrt{d}\cdot l(Q_k).$
\end{enumerate}
With those properties (i) and (ii), for each $Q_k$, we may construct a larger cube $Q_k^*$ so that $Q_k\subset Q_k^*$, $Q_k^*$ is centered at $y_k$ and $y_k\in (G_\lam)^c$, $l(Q_k^*)\approx l(Q_k)$. By the property (ii) above, the distance between $Q_k$ and $(G_\lam)^c$ equals to $Cl(Q_k)$.
Therefore by the construction of $Q_k^*$ and $y_k$, one may get
\Be\label{e:12whitney}
dist(y_k,Q_k)\approx l(Q_k), \ \ w(Q_k^*)\approx w(Q_k).
\Ee

Now we come back to give an estimate of $w(E_\lam)$. Split $f$ into two parts $f=f_1+f_2$ where $f_1(x)=f(x)\chi_{(G_\lam)^c}(x)$ and $f_2(x)=f(x)\chi_{G_\lam}(x)$. By the definition of $J_\lam$, when restricted on $(G_\lam)^c$, $A_i$ is a Lipschitz function with $\|\nabla A_i\|_{L^\infty((G_\lam)^c)}\leq\lam^{\fr{r}{q_i}}$ for $i=1,\cdots,n$. Let $\tilde{A}_i$ represent the Lipschitz extension of $A_i$ from $(G_\lam)^c$ to $\R^d$ (see \cite[page 174, Theorem 3]{Ste70}) so that
for each $i=1,\cdots,n$,
$$\tilde{A}_i(y)=A_i(y)\ \ \text{if} \ y\in (G_\lam)^c;$$
$$\big|\tilde{A}_i(x)-\tilde{A}_i(y)\big|\leq\lam^{\fr{r}{q_i}}|x-y|\ \ \text{for all}\ x,y\in\R^d.$$

Since the operator $\mathcal{C}_*[\cdots,\cdot]$ is sub-multilinear, we split $E_\lam$ as three terms and give estimates as follows:
\Be\label{e:12spmu}
\begin{split}
w(\{x&\in\R^d: \mathcal{C}_*[\nabla A_1,\cdots,\nabla A_n,f](x)>\lambda\})\\
&\leq w(10G_\lam)+w\big(\{x\in (10G_\lam)^c:\mathcal{C}_*[\nabla A_1,\cdots,\nabla A_n,f_1](x)>\lambda/2\}\big)\\
&\ \ \ \ +w\big(\{x\in (10G_\lam)^c:\mathcal{C}_*[\nabla A_1,\cdots,\nabla A_n,f_2](x)>\lambda/2\}\big).
\end{split}
\Ee

The above first term  satisfies $w(10G_\lam)\lc\lam^{-r}$, which is our required estimate. In the following, we only consider the second terms. Notice that we only need to consider $x\in(10G_\lam)^c$. By the definition of $f_1$, it is not difficulty to see that
$$\mathcal{C}_*[\nabla A_1,\cdots,\nabla A_n,f_1](x)=\mathcal{C}_*[\nabla\tilde{A}_1,\cdots,\nabla\tilde{A}_n,f_1](x).$$
With this equality in hand, Proposition \ref{p:12strongr} ($1\leq p<\infty$) implies
\Be\label{e:12qbigd}
\begin{split}
w\big(\big\{x&\in (10G_\lam)^c: \mathcal{C}_*[\nabla A_1,\cdots,\nabla A_n,f_1](x)>{\lam}/{2}\big\}\big)\\
&=w\big(\big\{x\in (10G_\lam)^c: \mathcal{C}_*[\nabla \tilde{A}_1,\cdots,\nabla\tilde{A}_n,f_1](x)>{\lam}/{2}\big\}\big)\\
&\ \ \lc \lam^{-p}\Big(\prod_{i=1}^n\|\nabla\tilde{A}_i\|^p_{L^\infty(\R^d,w)}\Big)\|f_1\|^p_{L^p(\R^d,w)}
\lc\lam^{-p+p\sum_{i=1}^n\fr{r}{q_i}}=\lam^{-r}.
\end{split}
\Ee
If $p=\infty$, the above method does not work.
We will show how to prove this kind of estimate in the next proposition.

Let us turn to $\mathcal{C}_*[\nabla A_1,\cdots,\nabla A_n, f_2](x)$. Recall $\N_i^j=\{i,i+1,\cdots,j\}$ and our construction of $G_\lam$, $y_k$, $Q_k$ and $Q_k^*$ above \eqref{e:12whitney}. Then by the property (i) of $\{Q_k\}_k$, we may write $f_2=\sum_k f\chi_{Q_k}$. Therefore we may get
$$\mathcal{C}_\eps[\nabla A_1,\cdots,\nabla A_n, f_2](x)=\sum_k\mathcal{C}_\eps[\nabla A_1,\cdots,\nabla A_n, f\chi_{Q_k}](x).$$
In the following we need to study carefully  $\prod_{i=1}^n\fr{{A}_i(x)-A_i(y)}{|x-y|}$. We will separate it into several terms and then give an estimate for each term.  Write
\Bes
\begin{split}
&\ \ \ \ \prod_{i=1}^n\fr{{A}_i(x)-A_i(y)}{|x-y|}\\&=\prod_{i=1}^n\Big(\fr{\tilde{A}_i(x)-\tilde{A}_i(y)}{|x-y|}+\fr{\tilde{A}_i(y)-\tilde{A}_i(y_k)}{|x-y|}+\fr{A_i(y_k)-A_i(y)}{|x-y|}\Big)\\
&=\sum\Big(\prod_{i\in N_1}\fr{\tilde{A}_i(x)-\tilde{A}_i(y)}{|x-y|}\Big)\Big(\prod_{i\in N_2}\fr{\tilde{A}_i(y)-\tilde{A}_i(y_k)}{|x-y|}\Big)\Big(\prod_{i\in N_3}\fr{{A}_i(y_k)-{A}_i(y)}{|x-y|}\Big)\\
&=I(x,y)+II(x,y,y_k),
\end{split}
\Ees
where in the third equality we divide $\N_{1}^n=N_1\cup N_2\cup N_3$ with $N_1$, $N_2$, $N_3$ non intersecting each other; and $I(x,y)$, $II(x,y,y_k)$, are defined as follows
\Be\label{e:12axyqbigd}
\begin{split}
I(x,y)=&\prod_{i=1}^n\fr{\tilde{A}_i(x)-\tilde{A}_i(y)}{|x-y|},\\
II(x,y,y_k)=&\sum_{N_1\subsetneq\N_{1}^n}\Big[\prod_{i\in N_1}\fr{\tilde{A}_i(x)-\tilde{A}_i(y)}{|x-y|}\Big]\\
&\times\Big[\prod_{i\in N_2}\fr{\tilde{A}_i(y)-\tilde{A}_i(y_k)}{|x-y|}\Big]\Big[\prod_{i\in N_3}\fr{{A}_i(y_k)-{A}_i(y)}{|x-y|}\Big].
\end{split}
\Ee
By the above decomposition, we in fact write $\mathcal{C}_\eps[\nabla A_1,\cdots,\nabla A_n, f\chi_{Q_k}](x)$ into $3^n$ terms and separate these terms into two parts according $I$ and $II$.

\emph{Weighted estimate of $\mathcal{C}_*[\cdots,\cdot]$ related to $I$.}  This estimate is similar to \eqref{e:12qbigd}. In fact, in this case there is only one term $\C_*[\nabla\tilde{A}_1,\cdots,\nabla\tilde{A}_n,f_2]$. Then by Proposition \ref{p:12strongr} ($1\leq p<\infty$), we get
\Bes
\begin{split}
w\big(\big\{x\in& (10G_\lam)^c: \mathcal{C}_*[\nabla \tilde{A}_1,\cdots,\nabla\tilde{A}_n,f_2](x)>{\lam}/{2}\big\}\big)\\
&\lc \lam^{-p}\Big(\prod_{i=1}^n\|\nabla\tilde{A}_i\|^p_{L^\infty(\R^d,w)}\Big)\|f_2\|^p_{L^p(\R^d,w)}
\lc\lam^{-p+p\sum_{i=1}^n\fr{r}{q_i}}=\lam^{-r}.
\end{split}
\Ees
If $p=\infty$, the above argument may not work again.

\emph{Weighted estimate of $\mathcal{C}_*[\cdots,\cdot]$ related to $II$.}
It is sufficient to consider one term $\mathcal{C}_*[\cdots,\cdot]$ related to $II$ in which $N_1$ is a proper subset of $\N_1^n$.
In such a case, without loss of generality, we may suppose that $N_1=\{1,\cdots,v\}$, $N_2=\{v+1,\cdots,m\}$ and $N_3=\{m+1,\cdots, n\}$ with $0\leq v\leq m\leq n$ and $v<n$. Here if $v=0$, it means that $N_1=\emptyset$; if $v=m$, $N_2=\emptyset$; if $m=n$,  $N_3=\emptyset$. With these notation, it is easy to see that $N_1$ is a proper subset of $\N_1^n$.
By a slight abuse of notation, we still utilize $II(x,y,y_k)$ to represent one term related to $N_1$, $N_2$ and $N_3$ in \eqref{e:12axyqbigd} and utilize $H_{II}(x)$ to represent $\mathcal{C}_*[\cdots,\cdot]$ related to ${II}(x,y,y_k)$, i.e.
$$H_{II}(x)=\sup_{\eps>0}\Big|\sum_k\int_{|x-y|>\eps}K(x-y)II(x,y,y_k)f(y)\chi_{Q_k}(y)dy\Big|.$$
Notice that $y_k\in(G_\lam)^c$, thus $y_k\in (J_{i,\lam})^c$. Therefore we obtain that
\Bes
\M(\nabla A_i)(y_k)\leq\lam^{\fr{r}{q_i}}, \ \text{for $i=m+1,\cdots,n$.}
\Ees
With the above fact and $\tilde{A}_i$ is a Lipschitz function with bound $\lam^{r/q_i}$ for $i=1,\cdots,m$, we get
\Bes
\begin{split}
|II(x,y,y_k)|&\lc\lam^{\sum_{i=1}^m\fr{r}{q_i}}\fr{|y-y_k|^{n-v}}{|x-y|^{n-v}}\prod_{i=m+1}^n\M(\nabla A_i)(y_k)\\
&\lc\lam^{\sum_{i=1}^n\fr{r}{q_i}}\fr{|y-y_k|^{n-v}}{|x-y|^{n-v}}.
\end{split}
\Ees
Since it is sufficient to  consider $x\in(10G_\lam)^c$, then for $y\in Q_k$,  $|x-y|\geq 2l(Q_k)\approx|y-y_k|$ by \eqref{e:12whitney}. Combining the above discussion with \eqref{e:12kb}, we obtain
\Be\label{e:12qgeqdinfty}
\begin{split}
H_{II}(x)&\leq\sum_k\int_{Q_k}|K(x-y)|\cdot|II(x,y,y_k)|\cdot|f(y)|dy\\
&\lc\lam^{\sum_{i=1}^n\fr{r}{q_i}}\sum_k\int_{Q_k}\fr{l(Q_k)^{n-v}}{[l(Q_k)+|x-y|]^{d+n-v}}|f(y)|dy
=\lam^{\sum_{i=1}^n\fr{r}{q_i}}T_{n-v}f(x)
\end{split}
\Ee
where $T_{n-v}$ is defined in Lemma \ref{l:12disq1infty}.
Applying the Chebyshev inequality with the above estimate, and utilizing Lemma \ref{l:12disq1infty} (notice  that $n-v\geq1$ because $N_1$ is a proper set of $\N_1^n$), we finally get
\Bes
w(\{x\in(10G_\lam)^c: H_{II}(x)>\lam\})\leq\lam^{-p+\sum_{i=1}^n\fr{rp}{q_i}}\|T_{n-v}f\|^p_{L^p(\R^d,w)}\lc\lam^{-r}\|f\|^p_{L^p(\R^d,w)}.
\Ees
Hence we finish the proof of the term $II$. If $p=\infty$, the above last argument may not work and some different discussion should be involved, see the proof in the next proposition.

Finally, we show how to modify our proof  here to the case $q_i=\infty$ for some $i=1,\cdots,l$. We may assume that only $q_1=\cdots=q_u=\infty$ with $1\leq u\leq l$. Thus $A_1$, $\cdots$, $A_u$ are Lipschitz functions which in fact are nice functions. Then we just fix $A_1, \cdots, A_u$ in the rest of the proof. We only make a construction of {\it exceptional set\/} for $A_{u+1}, \cdots, A_n$ and study $\prod_{i={u+1}}^n\fr{A_i(x)-A_i(y)}{|x-y|}$ by using the same way as we have done previously.
After that utilizing $A_1$, $\cdots$, $A_u$ are Lipschitz functions to deal with all estimates involved with $A_1,\cdots,A_u$,
we could obtain our required bound.
\end{proof}
\begin{prop}\label{p:12qleqinfty}
Let $\fr{1}{r}=\big(\sum_{i=1}^n\fr{1}{q_i}\big)+\fr{1}{p}$, $\fr{d}{d+n}\leq r<\infty$, $d<q_1,\cdots,q_l\leq\infty$ and $q_{l+1}$, $\cdots$, $q_n=d$ with $0\leq l\leq n$, $p=\infty$. Suppose that $w\in \bigcap_{i=1}^nA_{\max\{\fr{q_i}{d},1\}}(\R^d)$. Then
\Bes
\begin{split}
\|\C_*[\nabla A_1,&\cdots,\nabla A_n, f]\|_{L^{r,\infty}(\R^d,w)}\\
&\lc\Big(\prod_{i=1}^l\|\nabla A_i\|_{L^{q_i}(\R^d,w)}\Big)\Big(\prod_{i=l+1}^{n}\|\nabla A_i\|_{L^{d,1}(\R^d,w)}\Big)\|f\|_{L^\infty(\R^d,w)},
\end{split}
\Ees
where $L^{d,1}(\R^d,w)$ is the weighted Lorentz space.
\end{prop}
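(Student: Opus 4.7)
The plan is to parallel the proof of Proposition \ref{p:12qibigd}, identifying the places where the $L^\infty$ input of $f$ forces a different argument. After density and scaling reductions, I assume each $A_i\in C_c^\infty(\R^d)$ satisfies $\|\nabla A_i\|_{L^{q_i}(\R^d,w)}=1$ for $i\leq l$, $\|\nabla A_j\|_{L^{d,1}(\R^d,w)}=1$ for $j>l$, and $\|f\|_{L^\infty(\R^d,w)}=1$; the target becomes $w(E_\lam)\lc\lam^{-r}$ for each $\lam>0$. I introduce a small parameter $\eta\in(0,1]$ to be fixed at the end and set $J_{i,\lam}=\{\M(\nabla A_i)>\eta\lam^{r/q_i}\}$. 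Lemmas \ref{l:mw} and \ref{l:11md} give $w(J_{i,\lam})\lc\eta^{-r}\lam^{-r}$. Enlarging $\bigcup_i J_{i,\lam}$ to an open set $G_\lam$ of comparable $w$-measure, Whitney-decomposing $G_\lam=\bigcup_k Q_k$ with corner points $y_k\in(G_\lam)^c$, and Lipschitz-extending each $A_i|_{(G_\lam)^c}$ to $\tilde A_i$ on $\R^d$ with $\|\nabla\tilde A_i\|_\infty\leq\eta\lam^{r/q_i}$ proceeds just as in Proposition \ref{p:12qibigd}.

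The telescoping identity $A_i(x)-A_i(y)=(\tilde A_i(x)-\tilde A_i(y))+\chi_{G_\lam}(y)\bigl[(\tilde A_i(y)-\tilde A_i(y_k))+(A_i(y_k)-A_i(y))\bigr]$, valid on $(10G_\lam)^c\times\R^d$ (using $A_i=\tilde A_i$ at $x$ and at the $y_k$'s), leads to the pointwise decomposition
$$\mathcal C_*[\nabla A_1,\ldots,\nabla A_n,f](x)\leq \mathcal C_*[\nabla\tilde A_1,\ldots,\nabla\tilde A_n,f](x)+H_{II}(x)$$
on $(10G_\lam)^c$, where $H_{II}$ collects the contributions indexed by proper subsets $N_1\subsetneq\N_1^n$. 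The bookkeeping of \eqref{e:12qgeqdinfty} then gives $H_{II}(x)\lc\eta^n\lam\sum_{v<n} T_{n-v}(f)(x)$. Here is the first modification required by $p=\infty$: rather than applying Chebyshev on an $L^p$-norm of $f$, I use $\|f\|_{L^\infty(\R^d,w)}=1$ together with the $L^\infty(\R^d,w)\to L^\infty(\R^d,w)$ boundedness of $T_s$ from Lemma \ref{l:12disq1infty} to conclude $H_{II}(x)\lc\eta^n\lam$ pointwise on $(10G_\lam)^c$. Choosing $\eta$ small enough forces $H_{II}(x)<\lam/4$ and kills this contribution.

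It remains to estimate $\mathcal C_*[\nabla\tilde A_1,\ldots,\nabla\tilde A_n,f]$ on $(10G_\lam)^c$, which is a Calder\'on-Zygmund maximal operator of norm $\lc\eta^n\lam$ applied to $f\in L^\infty(\R^d,w)$. Splitting $f=f\chi_{G_\lam}+f\chi_{(G_\lam)^c}=:f_2+f_1$: for $f_2$, the bound $\|f_2\|_{L^1(\R^d,w)}\leq w(G_\lam)\lc\eta^{-r}\lam^{-r}$ combined with the weighted weak-$(1,1)$ estimate of Proposition \ref{p:12strongr}(ii) yields $w(\{\mathcal C_*[\tilde A,\ldots,f_2]>\lam/8\})\lc\eta^{n-r}\lam^{-r}\lc\lam^{-r}$ for our fixed $\eta$. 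For $f_1$, the equality $\mathcal C_*[\tilde A,\ldots,f_1](x)=\mathcal C_*[\nabla A_1,\ldots,\nabla A_n,f_1](x)$ holds on $(10G_\lam)^c$ because $\tilde A_i$ agrees with $A_i$ on $(G_\lam)^c\supset\supp(f_1)$ and at $x$, and I invoke Proposition \ref{p:16maxinf} with perturbed indices $\tilde q_i=q_i$ for $i\leq l$ and $\tilde q_i\in(d,\infty)$ slightly above $d$ for $i>l$, using Lorentz-space inclusions to convert the $L^{d,1}(\R^d,w)$ norms to $L^{\tilde q_i}(\R^d,w)$, followed by Chebyshev, to produce the remaining $\lam^{-r}$ bound. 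Adding these contributions to $w(10G_\lam)\lc\lam^{-r}$ completes the argument.

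The principal obstacle is this last step, estimating $\mathcal C_*[\nabla A_1,\ldots,\nabla A_n,f_1]$: reconciling an $L^\infty$ input with the endpoint indices $q_i=d$ is precisely what prevents Proposition \ref{p:12qibigd}'s $L^p$-based argument from covering $p=\infty$, and this is what forces a separate proposition. The resolution relies on Proposition \ref{p:16maxinf} applied with auxiliary indices $\tilde q_i>d$ and on the smallness of the fixed parameter $\eta$ to absorb the various constants that appear along the way.
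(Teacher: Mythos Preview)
Your overall strategy mirrors the paper's: the exceptional-set construction, the Lipschitz extension, the pointwise $L^\infty$ control of $H_{II}$, and the weighted weak-$(1,1)$ bound for the $f_2$-piece are all essentially the paper's arguments, with your parameter $\eta$ playing the role of the paper's level constant $C_0$. (One minor slip: Lemmas~\ref{l:mw} and~\ref{l:11md} give $w(J_{i,\lambda})\lc\eta^{-q_i}\lambda^{-r}$ for $i\le l$ and $\eta^{-d}\lambda^{-r}$ for $i>l$, not $\eta^{-r}\lambda^{-r}$; this is harmless since $\eta$ is ultimately a fixed constant.)

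The genuine gap is in your treatment of the $f_1$-piece. You propose to apply Proposition~\ref{p:16maxinf} to $\mathcal C_*[\nabla A_1,\dots,\nabla A_n,f_1]$ at indices $\tilde q_i$ with $\tilde q_i>d$ for $i>l$, invoking ``Lorentz-space inclusions'' to pass from $\|\nabla A_i\|_{L^{d,1}(\R^d,w)}$ to $\|\nabla A_i\|_{L^{\tilde q_i}(\R^d,w)}$. No such inclusion holds on $\R^d$: $L^{d,1}(\R^d,w)$ does not embed into $L^{\tilde q}(\R^d,w)$ for any $\tilde q>d$. And even if it did, Chebyshev at exponent $\tilde r>r$ would give only $\lambda^{-\tilde r}$, which is \emph{larger} than $\lambda^{-r}$ for $\lambda<1$, so the conclusion would still fail.

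What rescues this step in the paper is not an abstract inclusion but the pointwise bound furnished by the exceptional set itself: on $(G_\lambda)^c$ one has $|\nabla A_i|\le\M(\nabla A_i)\le\lambda^{r/q_i}$, and hence, interpolating $L^\infty((G_\lambda)^c)$ against $L^{q_i}(\R^d,w)$,
\[
\big\|(\nabla A_i)\chi_{(G_\lambda)^c}\big\|_{L^{\tilde q_i}(\R^d,w)}
\;\le\; \|\nabla A_i\|_{L^\infty((G_\lambda)^c)}^{\,1-q_i/\tilde q_i}\,\|\nabla A_i\|_{L^{q_i}(\R^d,w)}^{\,q_i/\tilde q_i}
\;\le\; \lambda^{\,r/q_i-r/\tilde q_i}.
\]
Feeding these $\lambda$-dependent norms into Proposition~\ref{p:16maxinf} (applied with the functions $A_i$ restricted to $(G_\lambda)^c$, which is all that the commutator against $f_1$ sees for $x\in(G_\lambda)^c$) and then into Chebyshev, the exponents combine to
\[
-\tilde r \;+\; \tilde r\sum_{i=1}^n\Big(\fr{r}{q_i}-\fr{r}{\tilde q_i}\Big)
\;=\; -\tilde r + \tilde r\Big(1-\fr{r}{\tilde r}\Big) \;=\; -r,
\]
exactly as needed. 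This interpolation between the a priori $L^{q_i}$ control and the $L^\infty$ control \emph{inherited from the exceptional-set construction} is the missing idea in your argument.
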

\begin{proof}
The proof here is similar to that of Proposition \ref{p:12qibigd}. So we shall be brief and  only indicate necessary modifications here. Proceeding the proof in Proposition \ref{p:12qibigd}, there are four different arguments.

The first one is that when we choose the set $E_\lam$, we set
$$E_\lam=\{x\in\R^d:\mathcal{C}_*[\nabla A_1,\cdots,\nabla A_n, f](x)>C_0\lam\},$$
where $C_0$ is a constant determined later. Our goal is to show $w(E_\lam)\lc\lam^{-r}$. We split $E_\lam$ as several terms and give estimates as follows:
\Be\label{e:16pinfty}
\begin{split}
w(\{x&\in\R^d: \mathcal{C}_*[\nabla A_1,\cdots,\nabla A_n,f](x)>C_0\lambda\})\\
&\leq w(10G_\lam)+w\big(\{x\in (10G_\lam)^c:\mathcal{C}_*[\nabla A_1,\cdots,\nabla A_n,f_1](x)>C_0\lambda/2\}\big)\\
&\qquad \ \ \ +w\big(\{x\in (10G_\lam)^c:\mathcal{C}_*[\nabla A_1,\cdots,\nabla A_n,f_2](x)>C_0\lambda/2\}\big).
\end{split}
\Ee
The first term above satisfies $w(10G_\lam)\lc\lam^{-r}$, so it is sufficient to consider the second and third terms. Thus we only need to consider $x\in(10G_\lam)^c$.

The second difference is the estimate related to the second term in \eqref{e:16pinfty}. Here we choose $\tilde{r}$, $\tilde{q}_1$, $\cdots$, $\tilde{q}_n$, such that $1<\tilde{r}<\infty$, $q_1<\tilde{q}_1<\infty$, $\cdots$, $q_n<\tilde{q}_n<\infty$, $d<\tilde{q}_1,\cdots,\tilde{q}_n$ and $\fr{1}{\tilde{r}}=\sum_{i=1}^n\fr{1}{\tilde{q}_i}$. Utilize Proposition \ref{p:16maxinf} with those above $\tilde{r}$, $\tilde{q}_1$, $\cdots$, $\tilde{q}_n$ and $A_i$ is a Lipschitz function on $(G_\lam)^c$ with Lipschitz bound $\lam^{\fr{r}{q_i}}$ for $i=1,\cdots,n$ , we may obtain
\Bes
\begin{split}
w&\big(\big\{x\in (10G_\lam)^c: \mathcal{C}_*[\nabla A_1,\cdots,\nabla A_n,f_1](x)>{C_0\lam}/{2}\big\}\big)\\
&\leq w\big(\big\{x\in (G_\lam)^c: \mathcal{C}_*[\nabla ({A}_1\chi_{(G_\lam)^c}),\cdots,\nabla({A}_n\chi_{(G_\lam)^c}),f_1](x)>{C_0\lam}/{2}\big\}\big)\\
&\lc \lam^{-\tilde{r}}\Big(\prod_{i=1}^n\|\nabla({A}_i\chi_{(G_\lam)^c})\|^{\tilde{r}}_{L^{\tilde{q}_i}(\R^d,w)}\Big)\|f_1\|^{\tilde{r}}_{L^\infty(\R^d,w)}\\
&\lc
\lam^{-\tilde{r}}\Big(\prod_{i=1}^n\|\nabla{A}_i\|^{(\tilde{q}_i-q_i)\fr{\tilde{r}}{\tilde{q}_i}}_{L^{\infty}({(G_\lam)^c})}\Big)
\Big(\prod_{i=1}^n \|\nabla A_i\|_{L^{q_i}(\R^d,w)}^{\fr{q_i}{\tilde{q}_i}\tilde{r}}\Big)\|f_1\|^{\tilde{r}}_{L^\infty(\R^d,w)}\\
&\lc\lam^{-\tilde{r}+\tilde{r}\big(\sum_{i=1}^n\fr{r}{q_i}\big)-{r}\big(\sum_{i=1}^n\fr{\tilde{r}}{\tilde{q}_i}\big)}=\lam^{-r}.
\end{split}
\Ees

Next consider the estimate related to the third term in \eqref{e:16pinfty}. As done in the proof of Proposition \ref{p:12qibigd}, we divide $\mathcal{C}_\eps[\nabla A_1,\cdots,\nabla A_n, f_2](x)$ into several terms and then separate these terms into two parts according $I$ and $II$ in \eqref{e:12axyqbigd}. So we get
\Bes
\begin{split}
&w\big(\{x\in (10G_\lam)^c:\mathcal{C}_*[\nabla A_1,\cdots,\nabla A_n,f_2](x)>C_0\lambda/2\}\big)\\
&\qquad\leq w\big(\big\{x\in (10G_\lam)^c: \mathcal{C}_*[\nabla \tilde{A}_1,\cdots,\nabla\tilde{A}_n,f_2](x)>{C_0\lam}/{4}\big\}\big)\\
&\qquad\qquad+ w\big(\big\{x\in (10G_\lam)^c: H_{II}(x)>{C_0\lam}/{4}\big\}\big).
\end{split}
\Ees

The third difference is the {\it weighted estimate of $\mathcal{C}_*[\cdots,\cdot]$ related to $I$\/}. Here we utilize Lemma \ref{p:12strongr} and the estimate $\|f_2\|_{L^1(\R^d,w)}\lc\|f\|_{L^\infty(\R^d,w)}w(G_\lam)\lc\lam^{-r}$ to get
\Bes
\begin{split}
w\big(\big\{x\in& (10G_\lam)^c: \mathcal{C}_*[\nabla \tilde{A}_1,\cdots,\nabla\tilde{A}_n,f_2](x)>{C_0\lam}/{4}\big\}\big)\\
&\lc \lam^{-1}\Big(\prod_{i=1}^n\|\nabla\tilde{A}_i\|_{L^\infty(\R^d)}\Big)\|f_2\|_{L^1(\R^d,w)}
\lc\lam^{-1+\big(\sum_{i=1}^n\fr{r}{q_i}\big)-r}=\lam^{-r}.
\end{split}
\Ees

The fourth difference is the {\it weighted estimate of $\mathcal{C}_*[\cdots,\cdot]$ related to $II$\/}. We shall prove that
\Be\label{e:12Hiiemp}\big\{x\in (10G_\lam)^c: H_{II}(x)>{C_0\lam}/{4}\big\}=\emptyset.
\Ee
In fact, by \eqref{e:12qgeqdinfty} and Lemma \ref{l:12disq1infty} with $q=\infty$, we get for any $x\in(10G_\lam)^c$,
\Bes
\begin{split}
H_{II}(x)\leq C_{d}\lam^{\sum_{i=1}^n\fr{r}{q_i}}\|f\|_{L^\infty(\R^d,w)}=C_d\lam.
\end{split}
\Ees
If we choose $C_0>4C_d$, we get \eqref{e:12Hiiemp}. So we complete the proof.
\end{proof}
\vskip0.24cm

\subsection{Case: some $q_i$s are smaller than $d$ and some are not}\label{s:1225}\quad
\vskip0.24cm
In this subsection, we consider the case: $d/(d+n)\leq r<\infty$ with at least one $q_i< d$, $1\leq p\leq\infty$. By our condition, the weight $w$ satisfies $w\in \big(\bigcap_{i=1}^nA_{\max\{\fr{q_i}{d},1\}}(\R^d)\big)\cap {A_p}(\R^d)=A_1(\R^d)$.
Without loss of generality, we may suppose that $d\leq q_1,\cdots,q_l\leq\infty$ and $1\leq q_{l+1},\cdots,q_n< d$ with $0\leq l< n$. If $l=0$, it means that all $q_1,\cdots, q_n \in [1,d)$. Also we suppose  that
$q_{1}=\cdots=q_{k}=d$ and $d<q_{k+1},\cdots,q_l\leq\infty$  with $0\leq k\leq l$. If $k=0$, we mean that there is no index in $q_1,\cdots,q_l$ equals to $d$, i.e. $d<q_1,\cdots,q_l\leq\infty$; if $k=l$, we mean that $q_1=\cdots=q_l=d$. Since the proof of $p=\infty$ is a little different from that of $1\leq p<\infty$, we shall give two propositions.

\begin{prop}\label{p:12qgeqleqd}
Suppose $w\in A_1(\R^d)$. Let $\fr{1}{r}=\big(\sum_{i=1}^n\fr{1}{q_i}\big)+\fr{1}{p}$, $\fr{d}{d+n}\leq r<\infty$, $q_{1}=\cdots=q_{k}=d$, $d< q_{k+1},\cdots,q_l\leq\infty$ and $1\leq q_{l+1},\cdots,q_n< d$ with $0\leq k\leq l$ and $0\leq l<n$, $1\leq p<\infty$. Then
\Bes
\begin{split}
\|\C_*[\nabla A_1,&\cdots,\nabla A_n, f]\|_{L^{r,\infty}(\R^d,w)}\\
&\lc\Big(\prod_{i=1}^k\|\nabla A_i\|_{L^{d,1}(\R^d,w)}\Big)\Big(\prod_{i=k+1}^n\|\nabla A_i\|_{L^{q_i}(\R^d,w)}\Big)\|f\|_{L^p(\R^d,w)},
\end{split}
\Ees
where $L^{d,1}(\R^d,w)$ is the weighted Lorentz space.
\end{prop}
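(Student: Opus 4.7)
The proof will mirror the template of Proposition \ref{p:12qibigd}, but the exceptional set must be enlarged to accommodate the indices $i=l+1,\dots,n$ with $q_i<d$, for which the Mary Weiss maximal operator $\M$ is no longer bounded on $L^{q_i}(\R^d,w)$. By the usual density/scaling reduction, the goal is to prove $w(\{\mathcal{C}_*[\nabla A_1,\dots,\nabla A_n,f]>\lambda\})\lc \lambda^{-r}$ for every $\lambda>0$, after normalizing $\|\nabla A_i\|_{L^{q_i}(\R^d,w)}=\|\nabla A_j\|_{L^{d,1}(\R^d,w)}=\|f\|_{L^p(\R^d,w)}=1$.

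For $i=1,\dots,l$ I set $J_{i,\lambda}=\{\M(\nabla A_i)>\lambda^{r/q_i}\}$, whose weighted measure is $\lc\lambda^{-r}$ by Lemma \ref{l:11md} (if $q_i=d$) or Lemma \ref{l:mw} (if $q_i>d$). For $i=l+1,\dots,n$, with $1/s_i=1/q_i-1/d$, I set
\[ J_{i,\lambda}=\{M(\nabla A_i)>\lambda^{r/q_i}\}\cup\{\mathfrak{M}_{w,s_i}(\nabla A_i)>\lambda^{r/q_i}\}\cup\bigcup_{j,h}\{T^*_{j,h}(\partial_h A_i)>\lambda^{r/q_i}\}; \]
each piece has weighted measure $\lc\lambda^{-r}$ because $w\in A_1\subset A_{q_i}$ yields weak-type $(q_i,q_i)$ bounds for $M$ and $T^*_{j,h}$, while Lemma \ref{l:12qleqd} handles $\mathfrak{M}_{w,s_i}$. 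Set $J_\lambda=\bigcup_i J_{i,\lambda}$, enlarge to an open $G_\lambda\supset J_\lambda$ with $w(G_\lambda)\lc\lambda^{-r}$, and run a Whitney decomposition $G_\lambda=\bigcup_k Q_k$ with companion cubes $Q_k^*$ centered at $y_k\in(G_\lambda)^c$ obeying \eqref{e:12whitney}.

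The crucial observation is that on $(G_\lambda)^c$ the pointwise inequality \eqref{e:16adiff} together with the exceptional set conditions implies
\[ \frac{|A_i(x)-A_i(y)|}{|x-y|}\lc \lambda^{r/q_i},\qquad x,y\in(G_\lambda)^c,\quad i=1,\dots,n \]
(for $i\le l$ this is direct from $\M$; for $i>l$ it comes from $M$ and $T^*_{j,h}$). The McShane--Whitney Lipschitz extension theorem then produces $\tilde A_i$ on $\R^d$ with $\tilde A_i=A_i$ on $(G_\lambda)^c$ and Lipschitz constant $\lc \lambda^{r/q_i}$. Writing $f=f_1+f_2$ with $f_1=f\chi_{(G_\lambda)^c}$, I bound $w(E_\lambda)$ by $w(10G_\lambda)$ plus the $f_1$-piece and the $f_2$-piece restricted to $(10G_\lambda)^c$. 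The $f_1$-piece reduces to Proposition \ref{p:12strongr} via $\mathcal{C}_*[\nabla A_\cdot,f_1]=\mathcal{C}_*[\nabla\tilde A_\cdot,f_1]$ pointwise on $(10G_\lambda)^c$, yielding $\lambda^{-r}$ exactly as in \eqref{e:12qbigd}.

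For the $f_2$-piece the expansion \eqref{e:12axyqbigd} produces $3^n$ products: the ``all-main'' term $I$ is handled by Proposition \ref{p:12strongr}, while each remaining $II$-term carries at least one factor forcing a decay $l(Q_k)/|x-y|$. The main obstacle, and the essential departure from Proposition \ref{p:12qibigd}, is the estimation of a type-(c) factor $(A_i(y_k)-A_i(y))/|x-y|$ when $i>l$, for which the pointwise Lipschitz bound based on $\M(\nabla A_i)(y_k)$ is unavailable (since $y\in Q_k\subset G_\lambda$). In its place, the averaged substitute
\[ \Big(\frac{1}{w(Q_k)}\int_{Q_k}|A_i(y_k)-A_i(y)|^{s_i}w(y)\,dy\Big)^{1/s_i}\lc \lambda^{r/q_i}\,l(Q_k) \]
follows from $\mathfrak{M}_{w,s_i}(\nabla A_i)(y_k)\le \lambda^{r/q_i}$ together with $w(Q_k^*)\approx w(Q_k)$, and must be inserted into the integral through a weighted H\"older inequality (pairing $s_i$ against its conjugate for $f$ and the remaining Lipschitz factors). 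Once this is done, summing over $k$ recovers an operator of the form $T_{n-v}$ from Lemma \ref{l:12disq1infty}; Chebyshev's inequality combined with the $L^p(\R^d,w)$-boundedness provided by that lemma then delivers the $\lambda^{-r}$ bound, exactly paralleling \eqref{e:12qgeqdinfty}.
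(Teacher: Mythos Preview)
Your route to the Lipschitz bound on $(G_\lambda)^c$ for $i>l$ is genuinely different from the paper's and more economical. The paper performs a weighted Calder\'on--Zygmund decomposition of each $|\partial_j A_i|^{q_i}$ (its Steps~2,~4,~5), introduces three further exceptional sets $B_\lambda$, $F_\lambda$, $H_\lambda$, and then proves the Lipschitz estimate separately for the good part $A_{i,j}^g$ (via $\M$) and the bad part $A_{i,j}^b$ (via a direct pointwise argument using the cancellation of $b_{j,i,Q}$). Your shortcut through \eqref{e:16adiff}, placing $M(\nabla A_i)$ and $T^*_{j,h}(\partial_h A_i)$ directly into the exceptional set, avoids the decomposition altogether; it trades explicitness for brevity and is perfectly valid here since $w\in A_1\subset A_{q_i}$ gives the needed weak-$(q_i,q_i)$ bounds for $M$ and $T^*_{j,h}$.

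Your sketch of the $f_2$-piece has the right idea but the final bookkeeping is off. After inserting the $\mathfrak{M}_{w,s_i}$ bound, the operator $T_{n-v}$ acts not on $f$ but on the composite $h(y)=\sum_k \chi_{Q_k}(y)\,|f(y)|\prod_{i\in N_3,\,i>l}|A_i(y_k)-A_i(y)|/l(Q_k)$, and the exponent in Chebyshev and Lemma~\ref{l:12disq1infty} is not $p$ but $q$ defined by $1/q=\sum_{i\in N_3,\,i>l} 1/s_i + 1/p$; you must check $q\ge 1$, which is exactly where the hypothesis $r\ge d/(d+n)$ is used. The H\"older step on each $Q_k$ is with exponents $s_i/q$ and $p/q$ (not ``$s_i$ against its conjugate''), and it produces an extra factor $\bigl(\sum_k w(Q_k)\bigr)^{q\sum 1/s_i}\lc w(G_\lambda)^{q\sum 1/s_i}\lc \lambda^{-rq\sum 1/s_i}$ that is essential to reach $\lambda^{-r}$. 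The paper carries this out in its Step~13; your outline would benefit from making these three points explicit.
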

\begin{proof}
We need  to prove that for any $\lam>0$, the following inequality holds
\Be\label{e:12qleqd23}
\begin{split}
w(\{&x\in\R^d:\C_*[\nabla A_1,\cdots,\nabla A_n, f](x)>\lam\})\\
&\lc\lam^{-r}\Big(\prod_{i=1}^k\|\nabla A_i\|^r_{L^{d,1}(\R^d,w)}\Big)\Big(\prod_{i=k+1}^n\|\nabla A_i\|^r_{L^{q_i}(\R^d,w)}\Big)\|f\|^r_{L^p(\R^d,w)}.
\end{split}
\Ee
By the standard dense and scaling argument, it is sufficient to consider that each $A_i$ ($i=1,\cdots,n$) and $f$ as smooth functions with compact supports and
\Bes
\begin{split}
&\|\nabla A_1\|_{L^{d,1}(\R^d,w)}=\cdots=\|\nabla A_{k}\|_{L^{d,1}(\R^d,w)}=1;\\
&\|\nabla A_i\|_{L^{q_i}(\R^d,w)}=1\ \ \text{for $i=k+1,\cdots,n$, and } \|f\|_{L^p(\R^d,w)}=1.
\end{split}
\Ees

As done in the proof of Proposition \ref{p:12qibigd}, we first suppose that all $q_{k+1},\cdots, q_l<\infty$ since the other case is easy and we will show lastly how to modify the proof to the case that there exist $q_i=\infty$ for some $i=k+1,\cdots,l$. Fix $\lam>0$ and set $$E_\lam=\{x\in\R^d:\mathcal{C}_*[\nabla A_1,\cdots,\nabla A_n, f](x)>\lam\}.$$
Our goal is to show $w(E_\lam)\lc\lam^{-r}$. The main idea is to construct some {\it exceptional set\/} such that the $w$ measure of {\it exceptional set\/} is bounded by $\lam^{-r}$, which is our required estimate. At the same time on the complementary set of {\it exceptional set\/} these functions $A_i$s should be Lipschitz functions with bound $\lam^{\fr{r}{q_i}}$ for each $i=1,\cdots,n$. The constructions of {\it exceptional sets\/} are different between $d\leq q_i<\infty$ and  $1\leq q_i<d$. Now we begin our constructions of some {\it exceptional sets\/}.

\subsubsection*{Step 1: Exceptional set related to $q_1,\cdots,q_l$} Define the {\it exceptional set} {for $i=1,\cdots,l$}
\Bes
\begin{split}
J_{i,\lam}=\big\{x\in\R^d:\M (\nabla A_i)(x)>\lam^{\fr{r}{q_i}}\big\};\ \ J_\lam=\cup_{i=1}^l J_{i,\lam}.
\end{split}
\Ees
Since $w\in A_1(\R^d)$, by Lemma \ref{l:mw} and Lemma \ref{l:11md}, $\M$ maps $L^p(\R^d,w)$ to itself for $p>d$ and maps $L^{d,1}(\R^d,w)$ to $L^{d,\infty}(\R^d,w)$, i.e.
\Be\label{e:11jlams3}
\begin{split}
w(J_{i,\lam})&\lc\lam^{-r}\|\nabla A_i\|^{d}_{L^{d,1}(\R^d,w)}=\lam^{-r}, \ \ i=1,\cdots,k;\\
w(J_{i,\lam})&\lc\lam^{-r}\|\nabla A_i\|^{q_i}_{L^{q_i}(\R^d,w)}=\lam^{-r}, \ \ i=k+1,\cdots,l.
\end{split}
\Ee
So we obtain that $w(J_\lam)\lc\lam^{-r}$.

\subsubsection*{{Step 2: Calder\'on-Zygmund decomposition.}}By the formula given in \cite[page 125, (17)]{Ste70}, for each $A_i$, $i=l+1,\cdots,n$, we may write
\Bes
A_i(x)=\sum_{j=1}^dC_d\int_{\R^d}\fr{x_j-y_j}{|x-y|^d}\pari_j A_i(y)dy=:\sum_{j=1}^dA_{i,j}(x).
\Ees
Notice that $w\in A_1(\R^d)$ in this case. For each $|\pari_jA_i|^{q_i}\in L^1(\R^d,w)$ with $j=1,\cdots, d$ and $i=l+1,\cdots,n$, making a Calder\'on-Zygmund decomposition at level $\lam^r$,
we may have the following conclusions (see \eg \cite[page 413, Theorem 3.5]{GR85}):
\begin{enumerate}[\quad (cz-i)]
\rm\item $\pari_j A_i=g_{j,i}+b_{j,i}$, $\|g_{j,i}\|_{L^\infty(\R^d)}\lc\lambda^{\fr{r}{q_i}}$, $\|g_{j,i}\|_{L^{q_i}(\R^d,w)}\lc\|\pari_jA_i\|_{L^{q_i}(\R^d,w)}$;
\item $b_{j,i}=\sum_{Q\in \mathcal{Q}_{j,i}} b_{j,i,Q}$, $\supp\  b_{j,i,Q}\subset Q$, where $\mathcal{Q}_{j,i}$ is a countable set of disjoint dyadic cubes;
\item Let $E_{j,i}=\bigcup_{Q\in \mathcal{Q}_{j,i}} Q$, then $w(E_{j,i})\lc {\lambda^{-{r}}}\|\pari_jA_i\|^{q_i}_{L^{q_i}(\R^d,w)}$;
\item $\int b_{j,i,Q}(y)dy=0$ for each $Q\in \mathcal{Q}_{j,i}$,  the unweighted estimate $\|b_{j,i,Q}\|^{q_i}_{L^{q_i}(\R^d)}\lc{\lambda^{{r}}}|Q|$  and the weighted estimate $\|b_{j,i}\|_{L^{q_i}{(\R^d,w)}}\lc\|\pari_jA_i\|_{L^{q_i}(\R^d,w)}$ hold.
\end{enumerate}

We shall split $A_{i,j}$ into two parts according the above Calder\'on-Zygmund decomposition (cz-i):
\Bes
\begin{split}
A^g_{i,j}(x)&=C_d\int_{\R^d}\fr{x_j-y_j}{|x-y|^d}g_{j,i}(y)dy;\\
A^b_{i,j}(x)&=C_d\int_{\R^d}\fr{x_j-y_j}{|x-y|^d}b_{j,i}(y)dy.
\end{split}
\Ees
Define the \emph{exceptional set}$B_\lam=\cup_{i=l+1}^n\cup_{j=1}^dE_{j,i}$. Then by (cz-iii), we obtain
$w(B_\lam)\lc\lam^{-r}.$

\subsubsection*{Step 3: Exceptional set $D_\lam$.} Set $\fr{1}{s_i}=\fr{1}{q_i}-\fr{1}{d}$ for $i=l+1,\cdots,n$. Notice that $w$ belongs to $A_1(\R^d)$. Define the following \emph{exceptional set}
\Bes
D_{i,\lam}=\Big\{x\in\R^d:\mathfrak{M}_{w,s_i}(\nabla A_i)(x)>\lam^{\fr{r}{q_i}}\Big\}
\Ees
where the maximal operator $\mathfrak{M}_{w,s_i}$ is defined in the paragraph above Lemma \ref{l:16sobolev}. Denote $D_\lam=\cup_{i=l+1}^nD_{i,\lam}$. Then by Lemma \ref{l:12qleqd}, we get that
\Bes
w(D_{i,\lam})\lc\lam^{-r}\|\nabla A_i\|^{q_i}_{L^{q_i}(\R^d,w)}=\lam^{-r};\ \ w(D_\lam)\lc\lam^{-r}.
\Ees

\subsubsection*{Step 4: Exceptional set $F_\lam$.} For each $j=1,\cdots,d$, $i=l+1,\cdots,n$, define the functions
\Bes
\Delta^{j,i}(x)=\sum_{Q\in\mathcal{Q}_{j,i}}\fr{l(Q)}{[l(Q)+|x-y_Q|]^{d+1}}m(Q)
\Ees
where $y_Q$ is the center of $Q$. Define another \emph{exceptional set}
\Bes
F_{j,i,\lam}=\{x\in\R^d:\Delta^{j,i}(x)>1\},\ \ F_{\lam}=\cup_{j=1}^d\cup_{i=l+1}^n F_{j,i,\lam}.
\Ees
Notice that we have $w\in A_1(\R^d)$. We claim the following  property of $A_1$ weight: For any cube $Q$ and $\alp>1$, there exists a constant $C$ independent of $Q$ and $\alp$ such that
\Be\label{e:16alpdoub}
w(\alp Q)\leq C\alp^d w(Q).
\Ee
In fact by \eqref{e:16a1} in Definition \ref{d:16a1}, there exist a constant $C$ independent of $\alp$ and $Q$ such that
\Bes
\fr{1}{|\alp Q|}\int_{\alp Q}w(z)dz\leq C\ \text{ess inf}_{y\in Q}w(y)\leq C\fr{1}{|Q|}\int_{Q}w(y)dy,
\Ees
which immediately implies \eqref{e:16alpdoub}. In the following, by using the Chebyshev inequality in the first inequality, \eqref{e:16alpdoub} with $\alp=2^k$ in the last second inequality and (cz-iii) in the last inequality, we get
\Bes
\begin{split}
w(F_{j,i,\lam})&\leq\int_{\R^d}\Delta_{j,i,\lam}(x)w(x)dx\lc\sum_{Q\in\mathcal{Q}_{j,i}}\Big[\int_{\R^d}\fr{l(Q)^{d+1}}{[l(Q)+|x-y_Q|]^{d+1}}w(x)dx\Big]\\
&\leq\sum_{Q\in\mathcal{Q}_{j,i}}\Big[\int_{Q}w(x)dx+\sum_{k=1}^\infty\int_{2^kQ\setminus 2^{k-1}Q}\fr{l(Q)^{d+1}}{[l(Q)+|x-y_Q|]^{d+1}}w(x)dx\Big]\\
&\lc\sum_{Q\in\mathcal{Q}_{j,i}}\Big[w(Q)+\sum_{k=1}^\infty \fr{w(2^kQ)}{2^{k(d+1)}}\Big]\lc\sum_{Q\in\mathcal{Q}_{j,i}}w(Q)\lc\lam^{-r}.
\end{split}
\Ees
Therefore we obtain that $w(F_\lam)\lc\lam^{-r}$.
\subsubsection*{Step 5: Exceptional set $H_\lam$.} Define the {\it exceptional set\/} for $i=l+1,\cdots,n$, $j=1,\cdots,d$,
\Bes
H_{i,j,\lam}=\{x\in\R^d:\M(\nabla A^g_{i,j})(x)>\lam^{r/q_i}\},\ \ H_\lam=\cup_{i=l+1}^n\cup_{j=1}^d H_{i,j,\lam}.
\Ees
Notice that by the definition of $A^g_{i,j}$, for each $s=1,\cdots,d$, we get $$\F(\pari_{s}A_{i,j}^g)(\xi)=C\fr{\xi_s\xi_j}{|\xi|^2}\F(g_{j,i})(\xi)\Rightarrow\nabla A^g_{i,j}=CRR_jg_{j,i},$$
where $\F$ is the Fourier transform, $R_j$ is the Riesz transform and $R=(R_1,\cdots,R_d)$.
Recall $w\in A_1(\R^d)$. Since $R_j$ is weighted strong type $(q,q)$ for $1<q<\infty$, we get $\|\nabla A^g_{i,j}\|_{L^q(\R^d,w)}\lc\|g_{j,i}\|_{L^q(\R^d,w)}.$ Choose  $d<q<\infty$, by the Chebyshev inequality, Lemma \ref{l:mw} and (cz-i) in Step 2, we get
\Bes
\begin{split}
w(H_{i,j,\lam})&\lc\lam^{-\fr{qr}{q_i}}\int_{\R^d}[\M(\nabla A_{i,j}^g)(x)]^qw(x)dx\lc\lam^{-\fr{qr}{q_i}}\int_{\R^d}|\nabla A_{i,j}^g(x)|^qw(x)dx\\
&\lc\lam^{-\fr{qr}{q_i}}\int_{\R^d}|g_{j,i}(x)|^qw(x)dx\lc\lam^{-r}\int_{\R^d}|g_{j,i}(x)|^{q_i}w(x)dx\lc\lam^{-r}.
\end{split}
\Ees
Therefore we get $w(H_\lam)\lc\lam^{-r}$.

\subsubsection*{Step 6: Final exceptional set $G_\lam$} Based on the construction of $J_\lam, B_\lam, D_\lam, F_\lam, H_\lam$ in Step 1-5 and the fact $w$ satisfies the doubling property,
we choose an open set $G_\lam$ which satisfies the following conditions:
\begin{enumerate}[(1).]
\item\quad $\big(10J_\lam\cup 10B_\lam\cup 10D_\lam\cup 10F_\lam\cup 10H_\lam\big)\subset G_\lam$;
\item\quad $w(G_\lam)\lc w(J_\lam)+w(B_\lam)+w(D_\lam)+w(F_\lam)+w(H_\lam)$.
\end{enumerate}
Applying the previous weighted estimates of $J_\lam$, $B_{\lam}$, $D_\lam$, $F_\lam$ and $H_\lam$, we obtain that $w(G_\lam)\lc\lam^{-r}$. Next making a Whitney decomposition of $G_\lam$ (see \cite{Gra249}), we may obtain a family of disjoint dyadic cubes $\{Q_k\}_k$ such that
\begin{enumerate}[(i).]
\item \quad $G_\lam=\bigcup_{k=1}^\infty Q_k$;
\item \quad $\sqrt{d}\cdot l(Q_k)\leq dist(Q_k,(G_\lam)^c)\leq4\sqrt{d}\cdot l(Q_k).$
\end{enumerate}
By the property (ii) above, the distance between $Q_k$ and $(G_\lam)^c$ equals to $Cl(Q_k)$. For each $Q_k$ above, we could construct a larger cube $Q_k^*$ so that $Q_k\subset Q_k^*$, $Q_k^*$ is centered at $y_k$ and $y_k\in (G_\lam)^c$, $l(Q_k^*)\approx l(Q_k)$.
Therefore by the construction of $Q_k^*$ and $y_k$, we may get that
\Be\label{e:12whitney3}
dist(y_k,Q_k)\approx l(Q_k).
\Ee
\vskip 0.24cm
Clearly, the \emph{exceptional set} $G_\lam$ constructed in Step 6 satisfies that $w(G_\lam)\lc\lam^{-r}$. Below we will prove that these functions $A_i$s are Lipschitz functions on $(G_\lam)^c$.

\subsubsection*{Step 7: Lipschitz estimates of $A_i$ on $(G_\lam)^c$} Choose any $x,y\in(G_\lam)^c$. By the {\it exceptional set} $J_\lam$ constructed in Step 1, we see that for $i=1,\cdots,l$
\Be\label{e:12lipgeqd}
|A_i(x)-A_i(y)|\leq\lam^{\fr{r}{q_i}}|x-y|.
\Ee
In the following we only consider $i=l+1,\cdots,n$.
By the Calder\'on-Zygmund decomposition in Step 2, it is sufficient to prove that $A_{i,j}^g$ and $A_{i,j}^b$ satisfy Lipschitz estimates on $\big(G_\lam\big)^c$ for each $i=l+1,\cdots,n$ and $j=1,\cdots,d$. Firstly, it is easy to see that $A_{i,j}^g$ satisfies Lipschitz estimates by the construction of $H_\lam$ in Step 5. In fact, $x,y\in (G_\lam)^c$ implies that $x,y\in H^c_\lam$, we obtain that for $i=l+1,\cdots,n$, $j=1,\cdots,d$,
\Be\label{e:12lipg3}
|A_{i,j}^g(x)-A_{i,j}^g(y)|\leq\lam^{\fr{r}{q_i}}|x-y|.
\Ee

We devote to proving that $A_{i,j}^b$ is a Lipschitz function on $(G_\lam)^c$. Recall the Calder\'on-Zygmund decomposition properties (cz-ii), (cz-iii) and (cz-iv) in Step 2. For each $b_{j,i}=\sum_{Q\in \mathcal{Q}_{j,i}} b_{j,i,Q}$, $\supp\ b_{j,i,Q}\subset Q$, where $\mathcal{Q}_{j,i}$ is a countable set of disjoint dyadic cubes. Then for each $Q\in\mathcal{Q}_{j,i}$, we define
$$A_{i,j}^{b_Q}(x)=C_d\int_{\R^d}\fr{x_j-z_j}{|x-z|^d}b_{j,i,Q}(z)dz.$$
Now we fix a dyadic cube $Q\in\mathcal{Q}_{j,i}$. We are going to give a straight-forward Lipschitz estimate of $A_{i,j}^{b_Q}$. By the construction of $G_\lam$, we get that $x,y\in(10B_\lam)^c$, i.e. $x,y\in (10Q)^c$, therefore we obtain
$dist(x,Q)\geq \fr{9}{2}l(Q)$ and $dist(y,Q)\geq \fr{9}{2}l(Q)$.
 Let $z_Q$ be the center of $Q$. Without loss of generality, assume that $|x-z_Q|\leq|y-z_Q|$. Choose a point $Z\in\R^d$ such that
\Bes
\begin{split}
|x-Z|<100|x-y|;\ \ |y-Z|\leq100|x-y|;\ \ |X-z_Q|>\fr{2}{5}|x-z_Q|
\end{split}
\Ees
for any $X$ belongs to the polygonal with vertex $x,y,Z$.
We could draw a figure to show that such a point $Z$ exists under the condition that $dist(x,Q)>\fr{9}{2}l(Q)$ and $dist(y,Q)>\fr{9}{2}l(Q)$.
Now we write $A_{i,j}^{b_Q}(x)-A_{i,j}^{b_Q}(y)=A_{i,j}^{b_Q}(x)-A_{i,j}^{b_Q}(Z)+A_{i,j}^{b_Q}(Z)-A_{i,j}^{b_Q}(y).$
By using the mean value formula, we have
\Be\label{e:12Aijc}
\begin{split}
&A_{i,j}^{b_Q}(x)-A_{i,j}^{b_Q}(Z)=\int_0^1\inn{x-Z}{\nabla (A_{i,j}^{b_Q})(tx+(1-t)Z)}dt;\\
&A_{i,j}^{b_Q}(Z)-A_{i,j}^{b_Q}(y)=\int_0^1\inn{Z-y}{\nabla (A_{i,j}^{b_Q})(tZ+(1-t)y)}dt.
\end{split}
\Ee
For any $0\leq t\leq1$, the points $tx+(1-t)Z$ and $tZ+(1-t)y$ lie in the polygonal with vertex $x,y,Z$. Notice that $|x-z_Q|\geq5l(Q)$. Then by our choice of $Z$, we get
\Be\label{e:12xyZt}
\begin{split}
&|tx+(1-t)Z-z_Q|>\fr{2}{5}|x-z_Q|>2l(Q),\\
&|tZ+(1-t)y-z_Q|>\fr{2}{5}|x-z_Q|>2l(Q).
\end{split}
\Ee
We set $Z(t)$ equals to $tx+(1-t)Z$ or $tZ+(1-t)y$ and $K_j(x)=x_j/|x|^d$. Using the cancelation condition of $b_{j,i,Q}$, \eqref{e:12xyZt} and the unweighted estimate  in (cz-iv) of  Step 2, we get that
\Bes
\begin{split}
|\nabla (A_{i,j}^{b_Q})(Z(t))|&=\Big|\int_{\R^d}\Big[(\nabla K_j)(Z(t)-z)-(\nabla K_j)(Z(t)-z_Q)\Big]b_{j,i,Q}(z)dz\Big|\\
&\lc\fr{l(Q)}{[l(Q)+|x-z_Q|]^{d+1}}\|b_{j,i,Q}\|_{L^1(Q)}\lc\lam^{\fr{r}{q_i}}\fr{l(Q)}{[l(Q)+|x-z_Q|]^{d+1}}|Q|.
\end{split}
\Ees
Combining the above arguments with \eqref{e:12Aijc} and the construction of $Z$, we obtain
\Bes
\big|A_{i,j}^{b_Q}(x)-A_{i,j}^{b_Q}(y)\big|\lc\lam^{\fr{r}{q_i}}\fr{l(Q)}{[l(Q)+|x-z_Q|]^{d+1}}|Q||x-y|.
\Ees
Notice $x\in (G_\lam)^c$ implies that $x\in (F_\lam)^c$ in Step 3. Then we get that
\Be\label{e:12lipb}
\big|A_{i,j}^{b}(x)-A_{i,j}^{b}(y)\big|\lc\lam^{\fr{r}{q_i}}|x-y|\sum_{Q\in\mathcal{Q}_{j,i}}\fr{l(Q)}{[l(Q)+|x-z_Q|]^{d+1}}|Q|\leq\lam^{\fr{r}{q_i}}|x-y|.
\Ee

Therefore we conclude that the Lipschitz estimates in \eqref{e:12lipgeqd} for $i=1,\cdots,l$, good function \eqref{e:12lipg3} and bad function \eqref{e:12lipb} for $i=l+1,\cdots,n$, to obtain that for any $i=1,\cdots,n$, $x,y\in (G_\lam)^c$,
\Be\label{e:12qileqdlip3}
|A_i(x)-A_i(y)|\leq\lam^{\fr{r}{q_i}}|x-y|.
\Ee
\vskip 0.24cm

\subsubsection*{Step 8: Weighted estimate of $E_\lam$} We come back to give an estimate of $E_\lam$. Split $f$ into two parts $f=f_1+f_2$ where $f_1(x)=f(x)\chi_{(G_\lam)^c}(x)$ and $f_2(x)=f(x)\chi_{G_\lam}(x)$. By the Lipschitz estimate in \eqref{e:12qileqdlip3}, when restricted on $(G_\lam)^c$, $A_i$ is a Lipschitz function with $\|\nabla A_i\|_{L^\infty((G_\lam)^c)}\leq\lam^{\fr{r}{q_i}}$ for $i=1,\cdots,n$. Let $\tilde{A}_i$ represent the Lipschitz extension of $A_i$ from $(G_\lam)^c$ to $\R^d$ (see \cite[page 174, Theorem 3]{Ste70}) so that
for each $i=1,\cdots,n$,
\Be\label{e:12qileqde3}
\begin{split}
\tilde{A}_i(y)&=A_i(y)\ \ \text{if} \ y\in (G_\lam)^c;\\
\big|\tilde{A}_i(x)-\tilde{A}_i(y)\big|&\leq\lam^{\fr{r}{q_i}}|x-y|\ \ \text{for all}\ x,y\in\R^d.
\end{split}
\Ee

Since the operator $\mathcal{C}_*[\cdots,\cdot]$ is sub-multilinear, we split $E_\lam$ as three terms
\Bes
\begin{split}
w(\{x&\in\R^d: \mathcal{C}_*[\nabla A_1,\cdots,\nabla A_n,f](x)>\lambda\})\\
&\leq w(10G_\lam)+w\big(\{x\in (10G_\lam)^c:\mathcal{C}_*[\nabla A_1,\cdots,\nabla A_n,f_1](x)>\lambda/2\}\big)\\
&\ \ \ \ +w\big(\{x\in (10G_\lam)^c:\mathcal{C}_*[\nabla A_1,\cdots,\nabla A_n,f_2](x)>\lambda/2\}\big).
\end{split}
\Ees
The above first term satisfies $w(10G_\lam)\lc\lam^{-r}$, which is our required estimate. Below we consider the second term. We only consider $x\in(10G_\lam)^c$. By the definition of $f_1$,
$$\mathcal{C}_*[\nabla A_1,\cdots,\nabla A_n,f_1](x)=\mathcal{C}_*[\nabla\tilde{A}_1,\cdots,\nabla\tilde{A}_n,f_1](x).$$
Notice that $w\in A_1(\R^d)\subsetneq A_p(\R^d)$. Applying the above equality and Proposition \ref{p:12strongr} ($1\leq p<\infty$), we derive that
\Bes
\begin{split}
w\big(\big\{x&\in (10G_\lam)^c: \mathcal{C}_*[\nabla A_1,\cdots,\nabla A_n,f_1](x)>{\lam}/{2}\big\}\big)\\
&=w\big(\big\{x\in (10G_\lam)^c: \mathcal{C}_*[\nabla \tilde{A}_1,\cdots,\nabla\tilde{A}_n,f_1](x)>{\lam}/{2}\big\}\big)\\
&\ \ \lc \lam^{-p}\Big(\prod_{i=1}^n\|\nabla\tilde{A}_i\|^p_{L^\infty(\R^d,w)}\Big)\|f_1\|^p_{L^p(\R^d,w)}
\lc\lam^{-p+p\sum_{i=1}^n\fr{r}{q_i}}=\lam^{-r}.
\end{split}
\Ees
If $p=\infty$, the above argument does not work.

\subsubsection*{Step 9: Weighted estimate of $\mathcal{C}_*[\nabla A_1,\cdots,\nabla A_n, f_2](x)$}  Recall $\N_i^j=\{i,i+1,\cdots,j\}$ and the construction of $G_\lam$, $y_k$, $Q_k$ and $Q_k^*$ above \eqref{e:12whitney3}. Then we may write $f_2=\sum_k f\chi_{Q_k}$. So
$$\mathcal{C}_\eps[\nabla A_1,\cdots,\nabla A_n, f_2](x)=\sum_k\mathcal{C}_\eps[\nabla A_1,\cdots,\nabla A_n, f\chi_{Q_k}](x).$$
In the following we study $\prod_{i=1}^n\fr{{A}_i(x)-A_i(y)}{|x-y|}$. We shall separate it into several terms and then give an estimate for each term.  Write
\Bes
\begin{split}
&\ \ \ \ \prod_{i=1}^n\fr{{A}_i(x)-A_i(y)}{|x-y|}\\&=\prod_{i=1}^n\Big(\fr{\tilde{A}_i(x)-\tilde{A}_i(y)}{|x-y|}+\fr{\tilde{A}_i(y)-\tilde{A}_i(y_k)}{|x-y|}+\fr{A_i(y_k)-A_i(y)}{|x-y|}\Big)\\
&=\sum_{\N_1^n}\Big(\prod_{i\in N_1}\fr{\tilde{A}_i(x)-\tilde{A}_i(y)}{|x-y|}\Big)\Big(\prod_{i\in N_2}\fr{\tilde{A}_i(y)-\tilde{A}_i(y_k)}{|x-y|}\Big)\Big(\prod_{i\in N_3}\fr{{A}_i(y_k)-{A}_i(y)}{|x-y|}\Big)\\
&=I(x,y)+II(x,y,y_k)+III(x,y,y_k)+IV(x,y,y_k),
\end{split}
\Ees
where in the third equality we divide $\N_{1}^n=N_1\cup N_2\cup N_3$ with $N_1$, $N_2$, $N_3$ non intersecting each other; and $I(x,y)$, $II(x,y,y_k)$, $III(x,y,y_k)$ and $IV(x,y,y_k)$ are defined as follows
\Be\label{e:12axyqbigd3}
\begin{split}
I(x,y)=&\prod_{i=1}^n\fr{\tilde{A}_i(x)-\tilde{A}_i(y)}{|x-y|},\\
II(x,y,y_k)=&\sum_{N_1\subsetneq\N_{1}^n\atop N_3=\emptyset}\Big(\prod_{i\in N_1}\fr{\tilde{A}_i(x)-\tilde{A}_i(y)}{|x-y|}\Big)\Big(\prod_{i\in N_2}\fr{\tilde{A}_i(y)-\tilde{A}_i(y_k)}{|x-y|}\Big),\\
III(x,y,y_k)=&\sum_{N_1\subsetneq\N_{1}^n\atop N_3\neq\emptyset, N_3\subset\{1,\cdots,l\}}\Big(\prod_{i\in N_1}\fr{\tilde{A}_i(x)-\tilde{A}_i(y)}{|x-y|}\Big)
\Big(\prod_{i\in N_2}\fr{\tilde{A}_i(y)-\tilde{A}_i(y_k)}{|x-y|}\Big)\\
&\qquad\qquad\qquad\times\Big(\prod_{i\in N_3}\fr{{A}_i(y_k)-{A}_i(y)}{|x-y|}\Big),\\
IV(x,y,y_k)=&\sum_{N_1\subsetneq\N_{1}^n\atop N_3 \neq\emptyset, N_3\cap\{l+1,\cdots,n\}\neq\emptyset}\Big(\prod_{i\in N_1}\fr{\tilde{A}_i(x)-\tilde{A}_i(y)}{|x-y|}\Big)
\Big(\prod_{i\in N_2}\fr{\tilde{A}_i(y)-\tilde{A}_i(y_k)}{|x-y|}\Big)\\
&\qquad\qquad\qquad\times\Big(\prod_{i\in N_3}\fr{{A}_i(y_k)-{A}_i(y)}{|x-y|}\Big).
\end{split}
\Ee
In the above decomposition, we in fact divide $\mathcal{C}_\eps[\nabla A_1,\cdots,\nabla A_n, f\chi_{Q_k}](x)$ into $3^n$ terms and separate these terms into four parts according $I$, $II$, $III$ and $IV$.

\subsubsection*{Step 10: Weighted estimate of $\mathcal{C}_*[\cdots,\cdot]$ related to $I$.}
In this case there is only one term, i.e. $\C_*[\nabla\tilde{A}_1,\cdots,\nabla\tilde{A}_n,f_2]$. Then by Proposition \ref{p:12strongr} ($1\leq p<\infty$), we get
\Bes
\begin{split}
w\big(\big\{x\in& (10G_\lam)^c: \mathcal{C}_*[\nabla \tilde{A}_1,\cdots,\nabla\tilde{A}_n,f_2](x)>{\lam}/{2}\big\}\big)\\
&\lc \lam^{-p}\Big(\prod_{i=1}^n\|\nabla\tilde{A}_i\|^p_{L^\infty(\R^d,w)}\Big)\|f_2\|^p_{L^p(\R^d,w)}
\lc\lam^{-p+p\sum_{i=1}^n\fr{r}{q_i}}=\lam^{-r}.
\end{split}
\Ees
If $p=\infty$, the above argument may not work.

\subsubsection*{Step 11: Weighted estimate of $\mathcal{C}_*[\cdots,\cdot]$ related to $II$.} It is sufficient to consider one term $\mathcal{C}_*[\cdots,\cdot]$ related to $II$ in which $N_1$ is a proper subset of $\N_1^n$ and $N_3=\emptyset$.
In such a case, without loss of generality, we may suppose $N_1=\{1,\cdots,v\}$, $N_2=\{v+1,\cdots,n\}$ with $0\leq v<n$. Here if $v=0$, it means that $N_1=\emptyset$. With these notation, we see that $N_1$ is a proper subset of $\N_1^n$.
By a slight abuse of notation, we still utilize $II(x,y,y_k)$ to represent one term related to $N_1$, $N_2$ and $N_3$ in \eqref{e:12axyqbigd3} and utilize $H_{II}(x)$ to represent $\mathcal{C}_*[\cdots,\cdot]$ related to ${II}(x,y,y_k)$, i.e.
$$H_{II}(x)=\sup_{\eps>0}\Big|\sum_k\int_{|x-y|>\eps}K(x-y)II(x,y,y_k)f\chi_{Q_k}(y)dy\Big|.$$
Notice that $\tilde{A}_i$ is a Lipschitz function with bound $\lam^{\fr{r}{q_i}}$ for $i=1,\cdots,n$ by \eqref{e:12qileqde3}. Then we obtain that
\Bes
\begin{split}
|II(x,y,y_k)|\lc\lam^{\sum_{i=1}^n\fr{r}{q_i}}\fr{|y-y_k|^{n-v}}{|x-y|^{n-v}}.
\end{split}
\Ees
Since we only need to consider $x\in(10G_\lam)^c$, then by \eqref{e:12whitney3}, we obtain that
\Be\label{e:12xygeqQ2}
|x-y|\geq 2l(Q_k)\approx|y-y_k|\ \ \text{for any $y\in Q_k$}.
\Ee
Now combining with \eqref{e:12kb}, the above estimate of $II(x,y,y_k)$ and \eqref{e:12xygeqQ2}, we obtain that
\Bes
\begin{split}
H_{II}(x)&\leq\sum_k\int_{Q_k}|K(x-y)|\cdot|II(x,y,y_k)|\cdot|f(y)|dy\\
&\lc\lam^{\sum_{i=1}^n\fr{r}{q_i}}\sum_k\int_{Q_k}\fr{l(Q_k)^{n-v}}{[l(Q_k)+|x-y|]^{d+n-v}}|f(y)|dy.
\end{split}
\Ees

Utilizing the Chebyshev inequality, the above estimate of $H_{II}$ and Lemma \ref{l:12disq1infty} (since $n-v\geq1$), we finally obtain that
\Bes
w(\{x\in(10G_\lam)^c: H_{II}(x)>\lam\})\leq\lam^{-p+\sum_{i=1}^n\fr{rp}{q_i}}\|T_{n-v}f\|^p_{L^p(\R^d,w)}\lc\lam^{-r}\|f\|^p_{L^p(\R^d,w)}.
\Ees
Hence we finish the proof related to $II$.

\subsubsection*{Step 12: Weighted estimate of $\mathcal{C}_*[\cdots,\cdot]$ related to $III$.}
It is sufficient to consider one term $\mathcal{C}_*[\cdots,\cdot]$ related to $III$ in which $N_1$ is a proper subset of $\N_1^n$ and $N_3$ is a nonempty subset of $\{1,\cdots,l\}$. By the condition in this proposition, for any $i\in N_3$, $d\leq q_i<\infty$. Thus $\nabla A_i\in L^{q_i}(\R^d,w)$ (or $L^{d,1}(\R^d,w)$ if $q_i=d$) with $d\leq q_i<\infty$. Then by using the fact $y_k$ lies in the $(G_\lam)^c$, i.e. $y_k\in (J_{i,\lam})^c$, we give the estimates in $N_3$ as follows
\Be\label{e:12mbablaiN3}
\fr{|A_i(y_k)-A_i(y)|}{|y-y_k|}\leq\M(\nabla A_i)(y_k)\leq\lam^{\fr{r}{q_i}}, \ \text{for $i\in N_3$.}
\Ee
Define $v=\card(N_1)$. Then we see that $0\leq v<n$. By a slight abuse of notation, we still utilize $III(x,y,y_k)$ to stand for one term related to $N_1$, $N_2$ and $N_3$ in \eqref{e:12axyqbigd3} and utilize $H_{III}(x)$ to represent $\mathcal{C}_*[\cdots,\cdot]$ related to ${III}(x,y,y_k)$, i.e.
$$H_{III}(x)=\sup_{\eps>0}\Big|\sum_k\int_{|x-y|>\eps}K(x-y)III(x,y,y_k)f\chi_{Q_k}(y)dy\Big|.$$

From the fact $\tilde{A}_i$s are Lipschitz functions with bounds $\lam^{r/q_i}$ for $i\in N_1\cup N_2$ and \eqref{e:12mbablaiN3}, we obtain that
\Bes
|III(x,y,y_k)|\lc\lam^{\sum_{i\in N_{1}\cup N_2}\fr{r}{q_i}}\fr{|y-y_k|^{n-v}}{|x-y|^{n-v}}\prod_{i\in N_3}\M(\nabla A_i)(y_k)\lc\lam^{\sum_{i=1}^n\fr{r}{q_i}}\fr{|y-y_k|^{n-v}}{|x-y|^{n-v}}.
\Ees
Inserting this estimate of $III(x,y,y_k)$ into $H_{III}$, combining with \eqref{e:12kb} and \eqref{e:12xygeqQ2}, and next utilizing the Chebyshev inequality and Lemma \ref{l:12disq1infty} (since $n-v\geq1$), we finally obtain that
\Bes
\begin{split}
w(\{x\in(10G_\lam)^c: H_{III}(x)>\lam\})\leq\lam^{-p+p\big(\sum_{i=1}^n\fr{r}{q_i}\big)}\|T_{n-v}f\|^p_{L^p(\R^d,w)}\lc\lam^{-r}\|f\|^p_{L^p(\R^d,w)}.
\end{split}
\Ees
Hence we finish the proof of this part.

\subsubsection*{Step 13: Weighted estimate of $\mathcal{C}_*[\cdots,\cdot]$ related to $IV$.}
It is sufficient to consider one term $\mathcal{C}_*[\cdots,\cdot]$ related to $IV$ in which $N_1\subsetneq\N_1^n$ and $N_3\neq\emptyset$ with $N_3\cap\{l+1,\cdots,n\}\neq\emptyset$.
In such a case, without loss of generality, we may suppose $l+1,\cdots, v\in N_3$ with $l+1\leq v\leq n$ and $v+1,\cdots,n$ belongs to $N_1$ or $N_2$. So we may assume that $N_3=\{\iota,\cdots,w, l+1,\cdots, v\}$ with $0\leq\iota\leq w\leq l$. Define $u=\card (N_1)$. Then $n-u\geq1$. With these notation, we can easily see that $N_3$ is a nonempty set with $N_3\cap\{l+1,\cdots,n\}\neq\emptyset$. Note that $w\in A_1(\R^d)$.
By a slight abuse of notation, we still use $IV(x,y,y_k)$ to stand for one term related to $N_1$, $N_2$ and $N_3$ in \eqref{e:12axyqbigd3} and use $H_{IV}(x)$ to stand for $\mathcal{C}_*[\cdots,\cdot]$ related to ${IV}(x,y,y_k)$, i.e.
$$H_{IV}(x)=\sup_{\eps>0}\Big|\sum_k\int_{|x-y|>\eps}K(x-y)IV(x,y,y_k)f\chi_{Q_k}(y)dy\Big|.$$

Note that $d\leq q_{1},\cdots,q_l\leq\infty$ and $1\leq q_{l+1},\cdots,q_n<d$.  Recall in Step 3, we set $\fr{1}{s_i}=\fr{1}{q_i}-\fr{1}{d}$ for $i=l+1,\cdots,n$. We also set $\fr{1}{q}=\big(\sum_{i=l+1}^v\fr{1}{s_i}\big)+\fr{1}{p}.$
Since $r\geq \fr{d}{d+n}$ and $\fr{1}{r}=\big(\sum_{i=1}^n\fr{1}{q_i}\big)+\fr{1}{p}$, we could obtain $1\leq q\leq\infty$ which will be crucial when we use Lemma \ref{l:12disq1infty}. With \eqref{e:12whitney3} and $\tilde{A}_i$ is a Lipschitz function with bound $\lam^{r/q_i}$ for $i\in N_1\cup N_2$, we have
\Bes
\begin{split}
|IV(x,y,y_k)|&\lc\lam^{(\sum_{i\in N_1\cup N_2})\fr{r}{q_i}}\fr{(l(Q_k))^{n-u}}{|x-y|^{n-u}}\Big[\prod_{i=\iota}^w\M(\nabla A_i)(y_k)\Big]\prod_{i=l+1}^v\fr{|A_i(y_k)-A_i(y)|}{l(Q_k)}\\
&\lc\lam^{(\sum_{i=1}^l+\sum_{i=v+1}^n)\fr{r}{q_i}}\fr{(l(Q_k))^{n-u}}{|x-y|^{n-u}}\prod_{i=l+1}^v\fr{|A_i(y_k)-A_i(y)|}{l(Q_k)}.
\end{split}
\Ees
Then inserting the above estimate of $IV$ into $H_{IV}$ with \eqref{e:12kb} and \eqref{e:12xygeqQ2}, we get
\Bes
\begin{split}
H_{IV}(x)&\leq\sum_k\int_{Q_k}|K(x-y)|\cdot|IV(x,y,y_k)|\cdot|f(y)|dy\\
&\lc\lam^{(\sum_{i=1}^l+\sum_{i=v+1}^n)\fr{r}{q_i}}\sum_k\int_{Q_k}\fr{l(Q_k)^{n-u}}{[l(Q_k)+|x-y|]^{d+n-u}}h_{l,v}(y)dy\\
&=\lam^{(\sum_{i=1}^l+\sum_{i=v+1}^n)\fr{r}{q_i}}T_{n-u}\big(h_{l,v}\big)(x),
\end{split}
\Ees
where the operator $T_{n-u}$ is defined in Lemma \ref{l:12disq1infty} and the function $h_{l,v}(y)$ is defined as $$h_{l,v}(y)=\sum_{Q_k}\prod_{i=l+1}^v\Big(\fr{|A_i(y_k)-A_i(y)|}{l(Q_k)}\Big)\chi_{Q_k}|f|(y).$$

Utilizing the Chebyshev inequality and the above estimate of $H_{IV}$, applying  Lemma \ref{l:12disq1infty}(note that $1\leq q\leq\infty$ and $n-u\geq1$), we finally obtain that
\Be\label{e:12Glam3qleqd3}
\begin{split}
w(\{x\in&(10G_\lam)^c: H_{IV}(x)>\lam\})\\
&\leq\lam^{-q+(\sum_{i=1}^l+\sum_{i=v+1}^n)\fr{rq}{q_i}}\int_{(10G_\lam)^c}[T_{n-u}(h_{l,v})(x)]^qw(x)dx\\
&\lc\lam^{-q+(\sum_{i=1}^l+\sum_{i=v+1}^n)\fr{rq}{q_i}}\|h_{l,v}\|^q_{L^q(\R^d,w)}.
\end{split}
\Ee
In the following we give an estimate of $\|h_{l,v}\|^q_{L^q(\R^d,w)}$. We may write
\Bes
\begin{split}
\|h_{l,v}\|^q_{L^q(\R^d,w)}&=\sum_{Q_k}\int_{Q_k}\Big[\prod_{i=l+1}^v\Big(\fr{|A_i(y_k)-A_i(y)|}{l(Q_k)}\Big)^q\Big]|f(y)|^qw(y)dy\\
&\leq\sum_{Q_k}\prod_{i=l+1}^v\Big[\int_{Q_k}\Big(\fr{|A_i(y_k)-A_i(y)|}{l(Q_k)}\Big)^{s_i}w(y)dy\Big]^{\fr{q}{s_i}}\Big[\int_{Q_k}|f(y)|^pw(y)dy\Big]^{\fr{q}{p}}\\
&\lc\sum_{Q_k}\prod_{i=l+1}^v\Big[\int_{Q^*_k}\Big(\fr{|A_i(y_k)-A_i(y)|}{l(Q^*_k)}\Big)^{s_i}w(y)dy\Big]^{\fr{q}{s_i}}\Big[\int_{Q_k}|f(y)|^pw(y)dy\Big]^{\fr{q}{p}}\\
&\lc\sum_{Q_k}\Big[\prod_{i=l+1}^v\mathfrak{M}_{w,s_i}(\nabla A_i)(y_k)^{q}w(Q_k)^{\fr{q}{s_i}}\Big]\Big[\int_{Q_k}|f(y)|^pw(y)dy\Big]^{\fr{q}{p}},
\end{split}
\Ees
where in the second inequality we use the H\"older inequality and the third inequality follows from the fact $Q_k\subset Q_k^*$, $y_k$ is the center of $Q^*_k$ and $l(Q^*_k)\approx l(Q_k)$. Notice that $y_k$ lies in the $(G_\lam)^c$, i.e. $y_k\in (D_{i,\lam})^c$ (see Step 3). Then we obtain that
\Bes
\mathfrak{M}_{w,s_i}(\nabla A_i)(y_k)\leq\lam^{\fr{r}{q_i}}, \ \text{for $i=l+1,\cdots,v$.}
\Ees
Utilizing the above inequality, the H\"older inequality again and (cz-iii) in Step 2, we get
\Bes
\begin{split}
\|h_{l,v}\|^q_{L^q(\R^d,w)}&\lc\lam^{\sum_{i=l+1}^v\fr{qr}{q_i}}\Big[\sum_{Q_k}w(Q_k)\Big]^{\sum_{i=l+1}^v\fr{q}{s_i}}\|f\|^q_{L^p(\R^d,w)}\\
&\lc\lam^{\sum_{i=l+1}^v\fr{qr}{q_i}}\big[w(G_\lam)\big]^{\sum_{i=l+1}^v\fr{q}{s_i}}\lc\lam^{\sum_{i=l+1}^v\Big(\fr{qr}{q_i}-\fr{qr}{s_i}\Big)}.
\end{split}
\Ees
Plunge the above estimate into \eqref{e:12Glam3qleqd3} with some elementary calculations, we finally obtain that
\Bes
\begin{split}
w(\{x\in(10G_\lam)^c: H_{IV}(x)>\lam\})\lc\lam^{-q+\big(\sum_{i=1}^n\fr{rq}{q_i}\big)-\big(\sum_{i=l+1}^v\fr{qr}{s_i}\big)}\lc\lam^{-r},
\end{split}
\Ees
hence we finish the proof of the term $IV$.

Finally, we show how to modify the above argument to the case $q_i=\infty$ for some $i=k+1,\cdots,l$.  Notice that only in Step 1 the construction of {\it exceptional set} is involved with $A_{k+1},\cdots, A_{l}$. We may assume that only $q_{k+1}=\cdots=q_u=\infty$ with $k+1\leq u\leq l$. Therefore $A_{k+1}$, $\cdots$, $A_u$ are Lipschitz functions. Then we just fix $A_{k+1}, \cdots, A_u$ in the rest of the proof. In Step 1 we modify the argument that we only make a construction of {\it exceptional set\/} for $A_1,\cdots, A_k$ and $A_{u+1}, \cdots, A_l$. These proofs in Steps 2-8 are the same. Later when studying $$\Big(\prod_{i={1}}^n\fr{A_i(x)-A_i(y)}{|x-y|}\Big)=\Big(\prod_{i={k+1}}^u\prod_{i={1}}^k\prod_{i={u+1}}^n\Big)\fr{A_i(x)-A_i(y)}{|x-y|},$$
we just use the same way as in Steps 9-13 to deal with the terms  from $\prod_{i={1}}^k\prod_{i={u+1}}^n$
since the term $\prod_{i={k+1}}^u\fr{A_i(x)-A_i(y)}{|x-y|}$ could be absorbed by the kernel $K(x-y)$ if we observe that $K(x-y)\prod_{i={k+1}}^u\fr{A_i(x)-A_i(y)}{|x-y|}$
is a standard Calder\'on-Zygmund kernel.
\end{proof}

\begin{prop}\label{p:12qgeqleqdinfty}
Let $\fr{d}{d+n}\leq r\leq 1$, $q_{1}=\cdots=q_{k}=d$, $d< q_{k+1},\cdots,q_l\leq\infty$ and $1\leq q_{l+1},\cdots,q_n< d$ with $0\leq k\leq l$ and $1\leq l<n$, $p=\infty$. Suppose that $w\in A_1(\R^d)$. Then
\Bes
\begin{split}
\|\C_*[\nabla A_1,&\cdots,\nabla A_n, f]\|_{L^{r,\infty}(\R^d,w)}\\
&\lc\Big(\prod_{i=1}^k\|\nabla A_i\|_{L^{d,1}(\R^d,w)}\Big)\Big(\prod_{i=k+1}^n\|\nabla A_i\|_{L^{q_i}(\R^d,w)}\Big)\|f\|_{L^\infty(\R^d,w)},
\end{split}
\Ees
where $L^{d,1}(\R^d,w)$ is the standard Lorentz space.
\end{prop}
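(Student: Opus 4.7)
The plan is to mirror the proof of Proposition \ref{p:12qgeqleqd} with the $p=\infty$ modifications that were used to pass from Proposition \ref{p:12qibigd} to Proposition \ref{p:12qleqinfty}. After the usual density, scaling and normalization reductions, the goal becomes
\Bes
w(\{x\in\R^d:\C_*[\nabla A_1,\cdots,\nabla A_n,f](x)>C_0\lam\})\lc\lam^{-r}
\Ees
for a large absolute constant $C_0$ to be chosen later. I would construct the exceptional set $G_\lam$ exactly as in Steps 1--6 of Proposition \ref{p:12qgeqleqd}: the Mary Weiss sets $J_{i,\lam}$ for $i=1,\cdots,l$ controlled by Lemma \ref{l:mw} and Lemma \ref{l:11md}, the Calder\'on--Zygmund decomposition of the Riesz-transform representation of $A_i$, $i>l$, at level $\lam^r$ producing $B_\lam$, the Sobolev--maximal set $D_\lam$ from Lemma \ref{l:12qleqd}, the tail set $F_\lam$, and the maximal-of-good set $H_\lam$. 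Each has $w$-measure $\lc\lam^{-r}$, so $w(G_\lam)\lc\lam^{-r}$, and the Lipschitz estimate $|A_i(x)-A_i(y)|\leq\lam^{r/q_i}|x-y|$ for $x,y\in(G_\lam)^c$ (Step 7) carries over unchanged. Let $\tilde A_i$ denote the Lipschitz extension to $\R^d$, and use the same Whitney decomposition of $G_\lam$ with companion points $y_k\in(G_\lam)^c$ and dilated cubes $Q_k^*$.

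Split $f=f_1+f_2$ with $f_1=f\chi_{(G_\lam)^c}$ and $f_2=f\chi_{G_\lam}$. The piece $w(10G_\lam)$ is $\lc\lam^{-r}$ trivially. For the piece involving $f_1$, following Proposition \ref{p:12qleqinfty}, I would select auxiliary exponents $\tilde r,\tilde q_1,\cdots,\tilde q_n$ with $\tilde q_i>\max\{q_i,d\}$ finite and $\fr{1}{\tilde r}=\sum_i\fr{1}{\tilde q_i}$, apply Proposition \ref{p:16maxinf} to $\C_*[\nabla(A_1\chi_{(G_\lam)^c}),\cdots,\nabla(A_n\chi_{(G_\lam)^c}),f_1]$, and use the interpolation bound
\Bes
\|\nabla(A_i\chi_{(G_\lam)^c})\|_{L^{\tilde q_i}(\R^d,w)}\lc\lam^{\fr{r}{q_i}(1-\fr{q_i}{\tilde q_i})}\|\nabla A_i\|_{L^{q_i}(\R^d,w)}^{\fr{q_i}{\tilde q_i}}
\Ees
(with $L^{q_i}$ replaced by $L^{d,1}$ when $q_i=d$) so that the accumulated powers of $\lam$ collapse to $\lam^{-r}$. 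The piece involving $f_2$ is expanded as in \eqref{e:12axyqbigd3} into the four groups $I,II,III,IV$.

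The $I$ contribution is handled by Proposition \ref{p:12strongr} at $p=1$, combined with the elementary bound $\|f_2\|_{L^1(\R^d,w)}\leq\|f\|_{L^\infty(\R^d,w)}w(G_\lam)\lc\lam^{-r}$ and $\|\nabla\tilde A_i\|_{L^\infty}\leq\lam^{r/q_i}$. The $II$ and $III$ contributions admit, as in \eqref{e:12qgeqdinfty}, the pointwise bound $\lam^{\sum_i r/q_i}\cdot T_{n-v}(f)(x)=\lam\cdot T_{n-v}(f)(x)$ with $n-v\geq1$, so that Lemma \ref{l:12disq1infty} at $q=\infty$ yields $T_{n-v}(f)\lc\|f\|_{L^\infty}\lc1$, and choosing $C_0$ sufficiently large empties the level set, in exact analogy with \eqref{e:12Hiiemp}.

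The main obstacle will be the $IV$ term. I would mimic Step 13 of Proposition \ref{p:12qgeqleqd} but with $p=\infty$: set $\fr{1}{q}=\sum_{i=l+1}^{v}\fr{1}{s_i}$ (without the $1/p$ summand now), verify that the hypotheses $\fr{d}{d+n}\leq r\leq1$ and $l\geq1$ force $1\leq q\leq\infty$ so that Lemma \ref{l:12disq1infty} applies, and then run the Chebyshev inequality in $L^q(w)$ on $T_{n-u}(h_{l,v})$. Bounding $\|h_{l,v}\|_{L^q(w)}$ relies on the Whitney-center control $\mathfrak{M}_{w,s_i}(\nabla A_i)(y_k)\leq\lam^{r/q_i}$ coming from $y_k\in(D_{i,\lam})^c$, a H\"older application in the $Q_k$-sum, the use of $\|f\|_{L^\infty(\R^d,w)}=1$ in place of an $L^p$-norm, and the Whitney mass $\sum_k w(Q_k)\leq w(G_\lam)\lc\lam^{-r}$. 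Verifying the admissibility of $q$ and tracking the $\lam$-powers carefully is precisely where the $p=\infty$ setting requires care; the restriction $l\geq1$ in the proposition statement appears to enter through these auxiliary parameter choices, ensuring that at least one index with $q_i\geq d$ is available to keep $q$ in the admissible range.
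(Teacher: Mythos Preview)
Your proposal follows precisely the route the paper itself indicates: reproduce the exceptional-set construction, Whitney decomposition and four-part splitting $I$--$IV$ of Proposition~\ref{p:12qgeqleqd}, and insert the $p=\infty$ modifications of Proposition~\ref{p:12qleqinfty} at exactly the three places required (the $f_1$ estimate via Proposition~\ref{p:16maxinf} with auxiliary exponents $\tilde q_i>\max\{q_i,d\}$, the $I$-piece via the weak $(1,1)$ bound together with $\|f_2\|_{L^1(w)}\lc\lam^{-r}$, and the empty-level-set device for $II$ and $III$). Your treatment of the $IV$-term via Chebyshev in $L^q(w)$ with $\tfrac{1}{q}=\sum_{i=l+1}^v \tfrac{1}{s_i}$ is exactly the $p=\infty$ specialization of Step~13 that the paper has in mind.
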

\begin{proof}
The proof is similar to that of Proposition \ref{p:12qgeqleqd} and one could follow the idea in the proof of Proposition \ref{p:12qleqinfty}, so the details of the proof is omitted.
\end{proof}
\vskip0.24cm
\subsection{Interpolation}\label{s:1226}\quad
\vskip0.24cm
Notice that we have already proven all the cases (ii) in Theorem \ref{t:12} by Propositions \ref{p:12qibigd},\ref{p:12qleqinfty},\ref{p:12qgeqleqd} and \ref{p:12qgeqleqdinfty}. And only part strong type multilinear estimates of (i) in Theorem \ref{t:12} has been established by Proposition \ref{p:12strongr} and Proposition \ref{p:16maxinf}. The rest part of (i) in Theorem \ref{t:12} just follow from the linear Marcinkiewicz  interpolation (see \cite{Ste71} or \cite{BL76}). In the following we show how to do this.

Since the maximal Calder\'on commutator $\C_*$ is $(n+1)$th submultilinear, when using the Marcinkiewicz interpolation, our main strategy is that we consider $\C_*$ as a sublinear operator if we fix part of $n$ variables.

Let $\nabla A_i\in L^{q_i}(\R^d,w)$ and $f\in L^p(\R^d,w)$ with $\fr{1}{r}=\big(\sum_{i=1}^n\fr{1}{q_i}\big)+\fr{1}{p}$, $\fr{d}{d+n}<r<\infty$, $1< q_i\leq\infty$ $(i=1,\cdots,n)$ and $1< p\leq\infty$. Let $w\in \big(\bigcap_{i=1}^nA_{\max\{\fr{q_i}{d},1\}}(\R^d)\big)\cap {A_p(\R^d)}$. Our goal is to show the follow strong type estimate
\Be\label{e:16strong}
\|\C_*[\nabla A_1,\cdots,\nabla A_n, f]\|_{L^r(\R^d,w)}\lc\Big(\prod_{i=1}^n\|\nabla A_i\|_{L^{q_i}(\R^d,w)}\Big)\|f\|_{L^p(\R^d,w)}.
\Ee

We divide the proof into several cases. We first consider the case all $q_i\neq d$ for $i=1,\cdots,n$. Therefore by (ii) of Theorem \ref{t:12}, the multilinear estimates \eqref{e:16mwek} are not involved with $L^{d,1}(\R^d,w)$ spaces. We further divide this case into two cases: $1<p<\infty$ and $p=\infty$. Consider firstly the case $1<p<\infty$. We fix all $\nabla A_i$, $q_i$ and $w\in \big(\bigcap_{i=1}^nA_{\max\{\fr{q_i}{d},1\}}(\R^d)\big)\cap {A_p}(\R^d)$. By the basic property of $A_p(\R^d)$ weight, $w\in A_{p_1}(\R^d)$ for all $p_1>p$. If we choose $p_1,r_1$ such that $p<p_1<\infty$ and $\fr{1}{r_1}=\big(\sum_{i=1}^n\fr{1}{q_i}\big)+\fr{1}{p_1}$, then by (ii) of Theorem \ref{t:12}
\Be\label{e:16p1}
\C_*[\nabla A_1,\cdots,\nabla A_n,\cdot]: L^{p_1}(\R^d,w)\mapsto L^{r_1,\infty}(\R^d,w).
\Ee
Since $w\in A_{p}(\R^d)$, by the revers H\"older inequality of $A_{p}(\R^d)$ weight (see \cite{Gra249}) and its definition, there exist $\eps>0$ such that $w\in A_{p-\eps}$ and $p-\eps\geq1$. Then we may choose $p_0,r_0$ such that $p-\eps\leq p_0<p$, $\fr{d}{d+n}<r_0<\infty$ and $\fr{1}{r_0}=\big(\sum_{i=1}^n\fr{1}{q_i}\big)+\fr{1}{p_0}$. Hence we obtain $w\in \big(\bigcap_{i=1}^nA_{\max\{\fr{q_i}{d},1\}}(\R^d)\big)\cap {A_{p_0}(\R^d)}$. By using (ii) of Theorem \ref{t:12}, we get
\Be\label{e:16p0}
\C_*[\nabla A_1,\cdots,\nabla A_n,\cdot]: L^{p_0}(\R^d,w)\mapsto L^{r_0,\infty}(\R^d,w).
\Ee
Applying the Marcinkiewicz interpolation with \eqref{e:16p1} and \eqref{e:16p0}, we establish the strong type estimate \eqref{e:16strong} provided that all $q_i\neq d$ and $1<p<\infty$.
Next we consider another case all $q_i\neq d$ and $p=\infty$. By our condition $r<\infty$, there is at least one $q_i<\infty$. Without loss of generality, we may suppose that $q_1<\infty$. If $d<q_1<\infty$, then the rest of proof is similar to the case $q_i\neq d$ and $1<p<\infty$ once fixing  $\nabla A_i$, $q_i$ for $i=2,\cdots,n$, $f\in L^\infty(\R^d,w)$ and $w\in \bigcap_{i=2}^nA_{\max\{\fr{q_i}{d},1\}}(\R^d)$. If $1<q_1<d$, then $w\in A_1(\R^d)$ by our condition. Therefore it is easy to show \eqref{e:16strong} using (ii) of Theorem \ref{t:12} once we fix $\nabla A_i$, $q_i$ for $i=2,\cdots,n$, $f\in L^\infty(\R^d,w)$ and $w\in A_1(\R^d)$.

Secondly let us consider the case there is only one $q_i$ which equals to $d$. Without loss of generality, we may suppose $q_1=d$. Then by our condition $w\in A_1(\R^d)$ in this case. Fix $\nabla A_i$, $q_i$ for $i=2,\cdots,n$, $f\in L^p(\R^d,w)$ and $w\in A_1(\R^d)$. Then we may choose $r_0,r_1, q_{1,0}, q_{1,1}$ such that $\fr{d}{d+n}<r_0,r_1<\infty$, $1<q_{1,0}<d<q_{1,1}$, $\fr{1}{r_0}=\fr{1}{q_{1,0}}+\big(\sum_{i=2}^n\fr{1}{q_i}\big)+\fr{1}{p}$, $\fr{1}{r_1}=\fr{1}{q_{1,1}}+\big(\sum_{i=2}^n\fr{1}{q_i}\big)+\fr{1}{p}$. Then by (ii) of Theorem \ref{t:12}, we get
\Bes
\C_*[\cdot,\nabla A_2,\cdots,\nabla A_n,f]: L^{q_{1,j}}(\R^d,w)\mapsto L^{r_j,\infty}(\R^d,w)\quad j=0,1.
\Ees
Using the Marcinkiewicz interpolation with the above two estimate, we get \eqref{e:16strong} in the case $q_1=d$ and all $q_2,\cdots,q_n\neq d$.

Finally we consider the general case there are $m$ numbers of $q_i$s which equal to $d$. We only need to show $m=2$, the general case just follows from the induction. Without loss of generality, we suppose that $q_1=q_2=d$. In this case, $w\in A_1(\R^d)$. Fix $\nabla A_i$, $q_i$ for $i=2,\cdots,n$, $f\in L^p(\R^d,w)$ and $w\in A_1(\R^d)$. Then we may choose $r_0,r_1, q_{1,0}, q_{1,1}$ such that $\fr{d}{d+n}<r_0,r_1<\infty$, $1<q_{1,0}<d<q_{1,1}$, $\fr{1}{r_0}=\fr{1}{q_{1,0}}+\big(\sum_{i=2}^n\fr{1}{q_i}\big)+\fr{1}{p}$, $\fr{1}{r_1}=\fr{1}{q_{1,1}}+\big(\sum_{i=2}^n\fr{1}{q_i}\big)+\fr{1}{p}$. Since $q_2=d$, by the result of the case there is only one $q_i=d$ we discussed above, we get the strong type estimate
\Bes
\C_*[\cdot,\nabla A_2,\cdots,\nabla A_n,f]: L^{q_{1,j}}(\R^d,w)\mapsto L^{r_j}(\R^d,w)\quad j=0,1.
\Ees
Using the Marcinkiewicz interpolation with the above two estimate, we get \eqref{e:16strong} in the case $q_1=q_2=d$ and all $q_3,\cdots,q_n\neq d$. Applying the induction of $m$, we finish the proof.

\begin{remark}
Instead of using the linear Marcinkiewicz interpolation in this proof, another possible more straightforward method is the multilinear interpolation with change of measures. To the best knowledge of the author, such kind of multilinear interpolation with change of measures is currently unknown. Therefore it will be interesting to establish the multilinear version of Stein-Weiss interpolation with change of measures (see \cite{SW58}).
\end{remark}

\subsection*{Acknowledgement} The author would like to thank the referees for their very careful reading and valuable suggestions.

\bibliographystyle{amsplain}

\end{document}